\documentclass[12pt,twoside]{article}

\usepackage{graphicx,graphics, amsthm, amsfonts, amsbsy,amssymb, amsmath, cite, array, hyperref, float,tikz,fancyvrb, hyperref,enumitem}
\usepackage[T1]{fontenc}
\everymath{\displaystyle}
\usepackage[margin=1in]{geometry}
\usepackage{mathrsfs}

\allowdisplaybreaks
\renewenvironment{proof}{{\noindent\bfseries Proof.}}{\qed}

\newtheorem{theorem}{Theorem}[section]
\newtheorem{remark}[theorem]{Remark}
\newtheorem{example}[theorem]{Example}
\newtheorem{problem}{Problem}
\newtheorem{lemma}[theorem]{Lemma}

\newtheorem{proposition}[theorem]{Proposition}

\usetikzlibrary{chains,positioning}
\usepackage{tikz,pgfplots}
\usetikzlibrary{decorations.pathmorphing}
\providecommand{\keywords}[1]
{
	\noindent\textbf{Keywords:} #1
}
\providecommand{\ams}[2]
{
	\noindent\textbf{2020 AMS subject classification:} #2
}

\def\qed{\nolinebreak\hfill\rule{.2cm}{.2cm}\par\addvspace{.5cm}}

\begin{document}
	\title{On Matrices Whose Distinct Eigenvalues Are Fully Captured by Quotient Matrices}
	\author{
		{ Bilal Ahmad Rather$^{a}$} \\[2mm]
		\small $^{a}$School of Mathematics and Statistics, Shandong University of Technology, Zibo 255049, China\\
		$^{a}$\texttt{bilalahmadrr@gmail.com}
	}
	
	\date{}
	
	\pagestyle{myheadings} \markboth{Bilal Ahmad Rather}{On Matrices Whose Distinct Eigenvalues Are Fully Captured by Quotient Matrices}
	\maketitle
	\begin{abstract}
		Let $M$ be the $n$-square matrix partitioned into $\ell^2$ blocks $b_{ij}$ according to some partition $P=\{C_{1},\dots,C_{\ell}\}$ of index set $\{1,\dots,n\}$. The quotient matrix $Q=(q_{ij})$ is a $k$-square matrix, with $\ell \leq k \leq n-1$, where $(ij)$-th entry is the average row sum (or column sum) of the corresponding block $b_{ij}$ in $M$. The partition $P$ is said to be \emph{equitable} if row sum of each block $b_{ij}$ is constant. In this case, the matrix $Q$ is referred to as the \emph{equitable quotient matrix} of $M$, and the spectrum of $Q$ is the subset of the spectrum of parent matrix $M$. We characterize some classes of matrices such that their equitable quotient matrix $Q$ contains all the distinct eigenvalues of $M$, thereby information can be obtained form the smallest matrix $Q$ without actually analyzing the parent matrix $M.$ We present necessary and the sufficient conditions for distinct eigenvalue of $M$ contained in the spectrum of of $Q$ in terms of eigenspaces. We end up article with some applications, where distinct eigenvalues of a parent matrix can be completely encoded by quotient matrix. 
	\end{abstract}
	\vskip 3mm
	
	\keywords{Distinct Eigenvalues; Quotient Matrices; Equitable Partitions; Graph Matrices.}
	
	\ams{}{15A18, 15A42, 05C50.}

\section{Introduction}
Let $\mathcal{M}_n(\mathbb{F})$ represent the collection of all $n \times n$ matrices with entries from a field $\mathbb{F}$. For a given matrix $A \in \mathcal{M}_n(\mathbb{F})$, its collection (multiset) of eigenvalues is called the \emph{spectrum}, denoted by $\sigma(A)$. The \emph{spectral radius} of $A$, denoted $\rho(A)$, is the maximum modulus among its eigenvalues. A matrix is classified as \emph{non-negative} if every entry is non-negative. When an eigenvalue $\leftthreetimes$ as a zero of its characteristic polynomial has order $e$, then we say its algebraic multiplicity is $k$, and we employ the notation $\leftthreetimes^{[e]}$. An eigenvalue is termed as the \emph{simple} eigenvalue, if its algebraic multiplicity is one (that is, $k=1$). Our focus excludes trivial matrices such as the zero matrix, as well as diagonal and triangular matrices, whose spectral properties are straightforward and well-established. The symbol $J_{n}$ will denote the matrix whose every entry is $1$, and $I_{n}$ will represent the identity matrix of order $n.$ An indicator vector $\textbf{1}_{i}$ on a set $S$ partitioned into subsets $S_{1}\cup\dots\cup S_{t},t\geq 2$ is a vector such that its entries are $1$ for coordinates in $S_{i}$ and $0$ otherwise.
We will consider matrices in $\mathcal{M}_n(\mathbb{R}) $, where $\mathbb{R}$ is the field of of real numbers $\mathbb{R}$. 

Consider $n$-square matrix $ M_{n} $, partitioned in some block form as:
\begin{equation*}
	M= \left[ \begin{array}{c | c | c | c | c}
		b_{1,1} & b_{1,2} & \cdots & b_{1,\ell-1} & b_{1,\ell} \\
		\hline
		b_{2,1} & b_{2,2} & \cdots & b_{2,\ell-1}& b_{2,\ell} \\
		\hline
		\vdots & \vdots & \ddots & \vdots & \vdots \\
		\hline
		b_{\ell-1,1} & b_{\ell-1,2} & \cdots & b_{\ell-1,\ell-1} & b_{\ell-1,\ell}\\
		\hline
		b_{\ell,1} & b_{\ell,2} & \cdots & b_{\ell,\ell-1} & b_{\ell,\ell}\\
	\end{array}\right]_{n}. \end{equation*}
The rows and the columns of the above matrix $ M $ are partitioned according to a partition 
$P=\{ P_{1},\dots,P_{\ell}\}$ of the index set $\{1,2,\dots,n\}.$ The matrix $ Q=(q_{ij})_{\ell\times \ell}  $ with $(ij)$-th entry as the average row (or column) sum of the block $ b_{ij} $ of $ M. $ If row sum of each block $b_{ij} $ is some constant, then the partition is said to be equitable or regular, and the corresponding matrix is known as the equitable quotient matrix (see \cite{BH,heamers, cds}). For further notations or terminology about the matrices, see \cite{hj, hjl,vilmar}.

Consider two finite sequences of real numbers, $\{s_1, s_2, \dots, s_n\}$ and $\{s'_1, s'_2, \dots, s'_m\}$, where $n > m$. The sequence $\{s'_i\}$ is said to interlace the sequence $\{s_i\}$ if $s_i \geq s'_i \geq s_{n-m+i}$
holds for all $i = 1, 2, \dots, m$. This interlacing is defined as tight if there exists an integer $k$ with $0 \leq k \leq m$ such that
$$
s_i = s'_i \quad \text{for } i = 1, 2, \dots, k, \quad \text{and} \quad s_{n-m+i} = s'_i \quad \text{for } i = k+1, \dots, m.
$$
In the specific case where $m = n-1$, the general interlacing inequalities simplify to the classic, chain-like form:
$$
s_1 \geq s'_1 \geq s_2 \geq s'_2 \geq \cdots \geq s_{n-1} \geq s'_{n-1} \geq s_n.
$$

The paper is organized as: In section \ref{section 2}, we discuss some existing results and present some new classes of matrices such that their quotient matrix containing distinct eigenvalues of $M.$ In Section \ref{section 3}, we characterize matrices such that quotient matrix contains the two distinct eigenvalue of matrix $M.$ Section \ref{section 5}, investigates the distinct eigenvalues of quotient matrices of graph matrices Section \ref{section 6}, presents results related to the quotient matrix which contains all the distinct eigenvalues of the Laplacian matrices assigned to graphs.

\section{Quotient matrix containing all the distinct eigenvalues of matrix $M$}\label{section 2}
In this section, we recall some basic existing results and present some new results related to the distinct eigenvalues of the quotient matrix of parent matrix $M.$ In Section \ref{section 4}, we discuss the necessary and the sufficient condition for the matrix $M$ with some equitable partition $\pi$ such that its quotient matrix contains all the distinct eigenvalues. 

The following result gives the relation between the eigenvalues of $ M $ and the eigenvalues of $ Q. $
\begin{lemma}[\cite{heamers}]\label{quotient matrix eigenvalues}
	Let $ M $ be a real symmetric matrix of order $ n $ and $ Q $ be its quotient matrix. Then the following hold.
	\begin{enumerate}
		\item If the partition $ P $ of $ I$ of matrix $ M $ is not equitable, then eigenvalues of $ Q $ interlace the eigenvalues of $ M $.
		\item If the partition $ P $ of $ I $ of matrix $ M $ is equitable, then the interlacing is tight, that is, the spectrum of $ Q $ is the subset of the spectrum of $ M. $
	\end{enumerate}
\end{lemma}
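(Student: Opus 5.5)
The plan follows Haemers' approach via the characteristic matrix of the partition. Let $P=\{P_1,\dots,P_\ell\}$ with $|P_i|=n_i$. Let $S$ be the $n\times \ell$ \emph{characteristic matrix} whose $i$-th column is the indicator vector $\textbf{1}_i$ of $P_i$. Put $D=S^{T}S=\mathrm{diag}(n_1,\dots,n_\ell)$, which is invertible, and set $R=SD^{-1/2}$, so that $R^{T}R=I_\ell$. A direct computation shows that the $(i,j)$ entry of $S^{T}MS$ is the sum of all entries of $b_{ij}$. Hence $Q=D^{-1}S^{T}MS$ is exactly the matrix of average row sums.

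\textbf{Part (1).} The matrix $B=R^{T}MR=D^{1/2}QD^{-1/2}$ is real symmetric and similar to $Q$, so $\sigma(Q)=\sigma(B)$ and these eigenvalues are real. Since $R$ has orthonormal columns, I invoke the Cauchy interlacing theorem for compressions $R^{T}MR$ of a symmetric matrix. Its standard proof uses Courant--Fischer: for $\mu_i$ (the $i$-th largest eigenvalue of $B$), take the span $W$ of the first $i$ eigenvectors of $B$. Then $RW$ is an $i$-dimensional subspace of $\mathbb{R}^n$ with the same Rayleigh quotients, so $\lambda_i(M)\ge \mu_i$. Applying the same argument to $-M$ gives $\mu_i\ge\lambda_{n-\ell+i}(M)$. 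This yields the interlacing in (1). Note that it actually holds for every partition, equitable or not.

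\textbf{Part (2).} Equitability means each block $b_{ij}$ has constant row sum $q_{ij}$. This is equivalent to the matrix identity $MS=SQ$: the $(v,j)$ entry of $MS$ is the row sum of row $v$ inside block $b_{ij}$ (where $v\in P_i$), while the $(v,j)$ entry of $SQ$ is $q_{ij}$. Now let $Qx=\mu x$ with $x\neq 0$. Then $M(Sx)=SQx=\mu Sx$, and $Sx\ne 0$ because $S$ has full column rank $\ell$. So every eigenvalue of $Q$ is an eigenvalue of $M$.

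To get the multiset statement ($\sigma(Q)\subseteq\sigma(M)$ with multiplicities) I argue via invariant subspaces. The column space $U=\mathrm{col}(S)$ is $M$-invariant, and since $M$ is symmetric so is $U^{\perp}$. Choose an orthonormal basis consisting of the columns of $R$ together with an orthonormal basis of $U^{\perp}$. In this basis $M$ becomes block diagonal, $B\oplus C$, and $B$ is similar to $Q$. Hence $\chi_M=\chi_Q\cdot\chi_C$. Tightness then follows from the standard fact that when the eigenvalues of $B$ are a sub-multiset of those of $M$ and they interlace, they realize equality on an initial and a final segment. Alternatively, one can simply record that the interlacing is attained via the common eigenvectors $R\,x$.

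The only step needing care is the tightness claim in its precise sequence form, which says that equalities occur as a prefix/suffix pattern. I expect to handle this by taking $k$ to be the number of eigenvalues of $Q$ that are at least the remaining ones matched from the top. Since the paper essentially uses only the subset statement, I would state and prove $\chi_Q\mid\chi_M$ as the main content of (2).
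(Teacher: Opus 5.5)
Your proof of (1), via Courant--Fischer for the compression $B=R^{T}MR$, is correct. So is your proof that $\chi_Q$ divides $\chi_M$ when $MS=SQ$. The paper gives no proof of this lemma, only the citation, and these are the standard arguments.

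The gap is the tightness step in (2). The ``standard fact'' you invoke is false: it says that an interlacing sub-multiset must agree with the spectrum of $M$ on an initial and a final segment. For instance, $(2,1)$ is a sub-multiset of $(3,2,1,0)$ and interlaces it, since $3\ge 2\ge 1$ and $2\ge 1\ge 0$. But tightness with $n=4$, $m=2$ forces $\mu_1\in\{\lambda_1,\lambda_3\}=\{3,1\}$, and $\mu_1=2$.

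This situation is realized by an equitable partition. Take
\[
M=J_2\oplus(3I_2-J_2)=\begin{bmatrix}1&1&0&0\\1&1&0&0\\0&0&2&-1\\0&0&-1&2\end{bmatrix},\qquad \pi=\bigl\{\{1,2\},\{3,4\}\bigr\}.
\]
Here $\pi$ is equitable with $Q=\operatorname{diag}(2,1)$, while $\sigma(M)=\{3,2,1,0\}$. So no choice of $k$ can work, and your planned construction of $k$ must fail. Tightness in the prefix/suffix sense defined in the introduction is simply not a consequence of equitability. Your alternative remark, that the interlacing is attained via the eigenvectors $Rx$, only restates the subset property and says nothing about where the equalities fall.

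What Haemers actually proves is the converse implication: if the interlacing is tight, then $MR=RB$, i.e.\ the partition is equitable. Separately, equitability gives $\chi_Q\mid\chi_M$, which is exactly what your invariant-subspace argument establishes. So for (2) you should prove only $\chi_Q\mid\chi_M$, reading ``tight'' as the ``that is'' clause of the statement. Drop the claim that equality occurs on a prefix and a suffix. It is also worth noting that, with the introduction's definition of tight interlacing, the first half of (2) reverses the direction of Haemers' implication and is false as literally stated.
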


Quotient matrices are very important matrices associated to parent matrices related to some partitions. These matrices have many applications in the spectral theory of graph matrices. In case partition is not equitable, then we can find bounds for the eigenvalues of the original matrix due to their interlacing fact. In case the partition is equitable then we get some information about the spectrum of the original matrix. Some of the applications and the properties of quotient matrices can be seen in \cite{DS, mehreen1, atikELA2018, you,bilal2020, bilal}. Our motivation is that: is it possible to find all the information about the spectrum of original matrix $ M$ form its smallest possible equitable quotient matrix.  A similar idea about the spectral radius of parent matrix $M$ and its quotient matrix $Q.$ We have a partial answer for non-negative matrices, in tersm of the following result.
\begin{lemma}[\cite{atikELA2018, you}]\label{spectral radius of positive matrices}
	The spectral radius of a non-negative matrix $ M $ and the spectral radius of an equitable quotient matrix $ Q $ coincide.
\end{lemma}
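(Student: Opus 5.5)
The plan is to prove both inequalities between $\rho(M)$ and $\rho(Q)$. Let $P=\{P_1,\dots,P_\ell\}$ be the equitable partition, and let $S$ be the $n\times \ell$ characteristic matrix whose $j$-th column is the indicator vector $\textbf{1}_j$ of $P_j$. Equitability says exactly that the row sums of every block $b_{ij}$ are constant and equal to $q_{ij}$, which is the matrix identity
\begin{equation*}
MS=SQ .
\end{equation*}
From this, every eigenvalue of $Q$ is an eigenvalue of $M$. If $Qx=\lambda x$ with $x\neq 0$, then $M(Sx)=SQx=\lambda Sx$, and $Sx\neq 0$ because the columns of $S$ have disjoint nonempty supports, so $S$ has full column rank. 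Hence $\sigma(Q)\subseteq\sigma(M)$ as sets, and in particular $\rho(Q)\le\rho(M)$.

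For the reverse inequality I will use Perron--Frobenius theory for non-negative matrices. Since $M\ge 0$, each $q_{ij}$ is a row sum of a non-negative block, so $Q\ge 0$. I then pass to the transpose: from $MS=SQ$ we get $S^{T}M^{T}=Q^{T}S^{T}$. The matrix $M^{T}$ is non-negative with $\rho(M^{T})=\rho(M)$, so by Perron--Frobenius it has a non-negative eigenvector $y\ge 0$, $y\neq 0$, with $M^{T}y=\rho(M)y$. Multiplying by $S^{T}$ gives
\begin{equation*}
Q^{T}(S^{T}y)=S^{T}M^{T}y=\rho(M)\,S^{T}y .
\end{equation*}
The vector $S^{T}y$ has $j$-th entry $\sum_{v\in P_j}y_v$. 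This is non-negative, and it is not identically zero because $y\ge 0$, $y\neq 0$ and the cells $P_j$ cover all indices. Therefore $\rho(M)$ is an eigenvalue of $Q^{T}$, hence of $Q$, and so $\rho(M)\le\rho(Q)$. Combining the two inequalities gives $\rho(M)=\rho(Q)$.

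The main obstacle is exactly this second step. A naive argument would take a Perron vector $x$ of $M$ and try to push it down to $Q$, but $S^{T}$ does not intertwine $M$ and $Q$ in that direction. The difficulty is resolved by transposing, and it relies on non-negativity in two places: to get a non-negative Perron vector without assuming irreducibility, and to ensure that the cell sums of that vector do not cancel to zero. I will state explicitly the Perron--Frobenius fact being used, namely that every non-negative square matrix $A$ has $\rho(A)$ as an eigenvalue with a non-negative eigenvector. With that fact the argument needs no irreducibility hypothesis.
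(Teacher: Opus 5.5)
Your proof is correct and complete. The identity $MS=SQ$ gives $\sigma(Q)\subseteq\sigma(M)$, hence $\rho(Q)\le\rho(M)$. Pushing a non-negative left Perron vector $y$ of $M$ down to the nonzero vector $S^{T}y$ shows $\rho(M)\in\sigma(Q^{T})=\sigma(Q)$. As you note, no irreducibility hypothesis is needed.

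The paper itself gives no proof of this lemma; it is quoted from \cite{atikELA2018, you}, so there is no internal argument to compare yours against.

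For the record, an equally short route that avoids Perron--Frobenius is Gelfand's formula. Since $M^{k}S=SQ^{k}$ and both matrices are non-negative, taking $\mathbf{1}\in\mathbb{R}^{\ell}$ gives
$\|M^{k}\|_\infty=\max_v (M^{k}S\mathbf{1})_v=\max_v (SQ^{k}\mathbf{1})_v=\|Q^{k}\|_\infty$ for every $k$. Hence $\rho(M)=\rho(Q)$. Your version has the advantage of exhibiting an explicit left eigenvector of $Q$ for $\rho(M)$.
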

For matrices with negative entries, Lemma \ref{spectral radius of positive matrices} fails, as can be seen below:
 $$M=\left[
\begin{array}{cccc}
	10 & -1 & -1 & -4 \\
	-1 & 10 & -1 & -4 \\
	6 & 6 & -14 & 1 \\
	6 & 6 & 1 & -14 \\
\end{array}
\right].
$$
The spectrum of $M$ is $ \{11,5.81025, -15, -9.81025\}$. The quotient matrix with equitable partition  $\{\{1,2\},\{3,4\}\} $ is	$ 	m=\left[
\begin{array}{cc}
	9 & -5 \\
	12 & -13 \\
\end{array}
\right],
$  and the spectrum of $M$ is $	\{5.81025, -9.81025\}$. Thus $M$ and its associated equitable quotient matrix does not share the same spectral radius for completely non-positive matrices. More counterexamples can be seen in \cite{atikELA2018, you, bilalijpam}. Motivated by Lemma \ref{spectral radius of positive matrices}, how about the other eigenvalues of $M$? When does $M$ and $Q$ share all the distinct eigenvalues. 
More formally, if the spectrum of $M$ is $ \{\leftthreetimes_{1}^{n_{1}},\leftthreetimes_{2}^{n_{2}},\dots, \leftthreetimes_{t}^{n_{t}}\},$ are the distinct eigenvalues of $ M $, then is it true that $ \leftthreetimes_{1},\leftthreetimes_{2},\dots, \leftthreetimes_{t} $ are always the eigenvalues of $ Q $, $ t\geq 2, n_{i}\geq 1 $ and $\sum_{i=1}^{t}n_{i}=n$. In this regard, Atik \cite{atik} asked the following problem about the distinct eigenvalues of the equitable quotient matrix of  $ M $.
\begin{problem}[Atik, \cite{atik}]\label{distinct eigenvalue problem}
	What is the necessary and sufficient condition on the equitable quotient matrix $ Q $ to contain all the distinct eigenvalues of $ M $?
\end{problem}

Problem \ref{distinct eigenvalue problem} was studied rigorously in \cite{bilalijpam}, the significant observation was that: it is a problem related to  equitable partitions, with one partition $Q$ and $M$ will share same distinct eigenvalues but with another partition property of distinct eigenvalues may fail. The problem was restated about the smallest equitable possible partition so that from the quotient matrix $Q$, we can encode all the distinct eigenvalues of the parent matrix $M.$ The following grantee for such a equitable partitions. 
\begin{theorem}[\cite{bilalijpam}]\label{theorem eqitable quotient matrix contains all the eigenvalues with some partition}
	Let $ M $ be a matrix of order $ n $ and $ Q $ be its equitable quotient matrix. Then the following holds.
	\begin{enumerate}
		\item There exists an equitable partition such that its quotient matrix $ Q $ contains the spectral radius of $ M. $
		\item There exists an equitable partition such that its quotient matrix $ Q $ contains all the distinct eigenvalues of $ M. $
	\end{enumerate}
\end{theorem}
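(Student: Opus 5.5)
The key observation is that the statement only asks for \emph{existence} of some equitable partition, and the trivial (discrete) partition $P=\{\{1\},\{2\},\dots,\{n\}\}$ is always equitable. Each block $b_{ij}$ is then the $1\times 1$ matrix $(m_{ij})$, so its row sum is trivially constant, and the quotient matrix is $Q=M$ itself. Hence $\sigma(Q)=\sigma(M)$, and in particular $Q$ contains the spectral radius and all distinct eigenvalues of $M$. The abstract restricts $k\le n-1$, so the trivial partition may be disallowed. I therefore plan to construct a nontrivial equitable partition with at most $n-1$ parts that keeps every distinct eigenvalue.

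The tool is the standard characteristic matrix argument. Let $S$ be the $n\times \ell$ matrix whose columns are the indicator vectors $\textbf{1}_1,\dots,\textbf{1}_\ell$ of the parts. Equitability is equivalent to $MS=SQ$. So if $Qx=\lambda x$, then $M(Sx)=\lambda Sx$ with $Sx\neq 0$, because $S$ has full column rank. This gives $\sigma(Q)\subseteq\sigma(M)$ for any matrix, not only symmetric ones. For the converse it suffices that, for each distinct eigenvalue $\lambda_i$ of $M$, some eigenvector of $M$ for $\lambda_i$ is constant on each part. Equivalently, $\mathrm{col}(S)$ is $M$-invariant and meets every eigenspace nontrivially. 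Indeed, if $v=Sx$ with $Mv=\lambda_i v$, then $S(Qx-\lambda_i x)=0$, so $Qx=\lambda_i x$.

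To build such a partition with $\ell\le n-1$ parts:
\begin{enumerate}
\item When $M$ has $t<n$ distinct eigenvalues, take the coarsest equitable partition refining a suitable starting partition. I would start from a partition that merges two indices $p,q$ which agree on one chosen eigenvector $v_i$ for each $\lambda_i$.
\item Iterate the usual refinement procedure (split parts by row sums into the current parts) until the partition is stable. Its column space is then $M$-invariant.
\item Check that the final column space still contains $v_1,\dots,v_t$.
\end{enumerate}

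The main obstacle is guaranteeing that refinement does not separate all indices, collapsing back to the discrete partition, while preserving the chosen eigenvectors. Equivalently, I need a nonidentity invariant subspace spanned by $0/1$ indicator vectors that contains an eigenvector from each eigenspace. This can genuinely fail in general: if all eigenvalues are distinct and $M$ admits no nontrivial equitable partition, only the discrete partition works.

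I therefore expect the honest statement to rely on the trivial partition, with $k\le n$ allowed. I would present that as the proof, and note that a proper partition exists exactly when an invariant indicator subspace of the above kind exists. Part (1) follows from part (2), since the spectral radius is attained by some eigenvalue of $M$, which part (2) places in $\sigma(Q)$.
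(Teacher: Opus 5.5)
Your proof is correct and is essentially the standard justification of this cited result (the paper itself gives no proof here): the discrete partition $\{\{1\},\dots,\{n\}\}$ is always equitable with quotient $Q=M$, so both parts hold at once. Your caveat about the restriction $k\le n-1$ in the abstract is also right, and it can be sharpened: any $M$ with $n$ distinct eigenvalues already violates part (2) under that restriction, whether or not $M$ has other equitable partitions, because a quotient of order at most $n-1$ has at most $n-1$ eigenvalues.
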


Related to the distinct eigenvalue of equitable quotient $Q$ and the parent matrix $M,$ the following problem was asked in \cite{bilalijpam}.
\begin{problem}\label{problem distinct 2}
	Characterize those matrices such that the spectrum of the quotient matrix contains all the distinct eigenvalues of parent matrix with respect to the smallest equitable partition?
\end{problem}

Related to Problem \ref{problem distinct 2}, the following class of the matrices was given in \cite{bilalijpam}, such that its equitable quotinet matrix contains all the distinct eigenvalues with respect to the smallest partition $P=\{\{1\},\{2\},\{3,\dots,n+2\}\}.$
\begin{proposition}\label{Qmat contains distinct eigenvalues of M}
	Let $ M $ be the matrix given as
	\begin{equation}\label{matrix containing all distinct eigenvalues}
		M=\left[ \begin{array}{c | c | c c c c c}
			4n-2 & -(2n-2) & -2   & -2 & \dots   & -2 & -2\\
			\hline
			-1   & 4n+1   & -4   & -4 & \dots   & -4 & -4\\
			\hline
			-1   & -2(2n-2) & 8n-7 & -4 & \dots   & -4 & -4\\
			-1   & -2(2n-2) & -4 & 8n-7 & \dots   & -4 & -4\\
			\vdots & \vdots & \vdots & \vdots & \ddots & \vdots & \vdots\\
			-1   & -2(2n-2) & -4 & -4 & \dots   & 8n-7 & -4\\
			-1   & -2(2n-2) & -4 & -4 & \dots   & -4 & 8n-7
		\end{array}
		\right ]_{n+2}. 
	\end{equation}
	 Then it's equitable quotient matrix $ Q $ given in \eqref{Qmat of special M} contains all the distinct eigenvalues of $ M. $
	\begin{equation}\label{Qmat of special M}
		Q=\begin{bmatrix}
			4n-2 & -(2n-2) & -2n\\
			-1   & 4n+1    & -4n\\
			-1   & -2(2n-2)& 4n-3
		\end{bmatrix}.
	\end{equation}
\end{proposition}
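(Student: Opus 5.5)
The plan is to compute the spectrum of $M$ explicitly by splitting $\mathbb{R}^{n+2}$ into two $M$-invariant pieces: the span of the indicator vectors of the partition $P=\{\{1\},\{2\},\{3,\dots,n+2\}\}$, and the vectors supported on the last $n$ coordinates with coordinate sum zero. Then I will check that every eigenvalue appearing on the second piece is already an eigenvalue of $Q$. Lemma \ref{quotient matrix eigenvalues} is stated only for symmetric $M$, and $M$ here is not symmetric, so I will not use it. Instead I will use the standard fact that if $S$ is the $(n+2)\times 3$ characteristic matrix of an equitable partition, then $MS=SQ$. Hence the column space of $S$ is $M$-invariant, and $\sigma(Q)\subseteq\sigma(M)$ as a multiset.

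First, I verify that $P$ is equitable and that $Q$ in \eqref{Qmat of special M} is its quotient. Row $1$ has block sums $4n-2$, $-(2n-2)$ and $n\cdot(-2)=-2n$. Row $2$ gives $-1$, $4n+1$ and $-4n$. Each of rows $3,\dots,n+2$ gives $-1$, $-2(2n-2)$ and $(8n-7)-4(n-1)=4n-3$. These are exactly the rows of \eqref{Qmat of special M}.

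Second, take any $x=(0,0,y)^{T}$ with $y\in\mathbb{R}^{n}$ and $\textbf{1}^{T}y=0$. The lower-right block equals $(8n-3)I_n-4J_n$, so it maps $y$ to $(8n-3)y$. Rows $1$ and $2$ are constant ($-2$ and $-4$) on columns $3,\dots,n+2$, so they contribute $-2\,\textbf{1}^{T}y=0$ and $-4\,\textbf{1}^{T}y=0$. Hence $Mx=(8n-3)x$, and this subspace has dimension $n-1$. Combined with the $3$-dimensional invariant span of the columns of $S$, which is complementary to it, this gives
$$\sigma(M)=\sigma(Q)\cup\{8n-3\}^{[n-1]}.$$
For $n=1$ the second part is empty and the statement is immediate. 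So the only thing left to prove is that $8n-3\in\sigma(Q)$.

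This last step is the only real obstacle, and I expect it to be a direct determinant check. We have
$$Q-(8n-3)I_3=\begin{bmatrix}-4n-1 & -(2n-2) & -2n\\ -1 & -4n+4 & -4n\\ -1 & -4n+4 & -4n\end{bmatrix}.$$
Its second and third rows coincide, so $\det\bigl(Q-(8n-3)I_3\bigr)=0$, and therefore $8n-3$ is an eigenvalue of $Q$. Consequently every distinct eigenvalue of $M$ lies in $\sigma(Q)$, which completes the argument.
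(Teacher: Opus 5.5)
Your proof is correct and complete. This paper gives no proof of the proposition; it is quoted from \cite{bilalijpam}. Your argument uses the same device the paper uses for Theorem~\ref{thm:n-by-n-two-eigs}. You split $\mathbb{R}^{n+2}$ into the $M$-invariant partition subspace $W$, on which $M$ acts as $Q$, and the $(n-1)$-dimensional space of vectors supported on the large cell with zero coordinate sum, on which $M$ acts as the scalar $8n-3$. Your final determinant check is then exactly the criterion of Theorem~\ref{thm:equitable-all-eigs-general}: it yields an eigenvector of $M$ for $8n-3$ lying in $W$, namely $(0,n,-(n-1),\dots,-(n-1))^{T}$. You were also right not to invoke Lemma~\ref{quotient matrix eigenvalues}, since $M$ is not symmetric.

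One harmless slip: the $(1,1)$ entry of $Q-(8n-3)I_3$ is $(4n-2)-(8n-3)=-4n+1$, not $-4n-1$. This does not affect anything, because rows $2$ and $3$ coincide either way.

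If you want the full spectrum, the zero row sums of $Q$ and $\operatorname{tr}Q=12n-4$ give $\sigma(Q)=\{0,4n-1,8n-3\}$. Hence $\sigma(M)=\{0,4n-1,(8n-3)^{[n]}\}$.
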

Several interesting results related to \ref{distinct eigenvalue problem} were given in \cite{bilalijpam}; notably identifying the problem about partitions, result related to enlarging partition (Theorem 2.7 \cite{bilalijpam}), classes of matrices such that their equitable quotient matrix with smallest partition containing arbitrary number of distinct eigenvalues of parent matrix except one eigenvalue (Theorem 2.6 \cite{bilalijpam}), and other illustrations. We have technical refinement about Proposition \ref{Qmat contains distinct eigenvalues of M}, the matrix given in \eqref{Qmat of special M} is not the smallest equitable possible quotient matrix, rather it is the second smallest equitable quotient matrix which contains all the distinct eigenvalue of matrix given in \eqref{matrix containing all distinct eigenvalues}. As the smallest equitable quotient matrix of \eqref{matrix containing all distinct eigenvalues} is the matrix $Q=(0)_{1\times 1}$ with partition $P=\{1,2,\dots,n+2\}$. It contains only one eigenvalue with the smallest equitable partition. \vskip 2mm

Thus, we are still searching  for a matrix $M$ such that its equitable matrix $Q$ contains all the distinct eigenvalues, and that will partially answer Problem \eqref{problem distinct 2}.

Consider the matrix $M$ of order $n$,  given below
\begin{equation}\label{mat 1}
	 M=\begin{bmatrix}
	1 & -aJ_{1\times (n-1)}\\
	a J_{(n-1)\times 1} & bI_{n-1}+a(J_{n-1}-I_{n-1})
\end{bmatrix}_{n},
\end{equation}
where $a$ and $b$ are scalers. The matrix has positive as well as negative entries. Consider the following vectors
\begin{align*}
	X_{1}&=(0,-1,0,\dots,0,1),X_{2}=(0,-1,0,\dots,1,0), X_{3}=(0,-1,0,\dots,0,1,0,0),\\
	&\vdots\\
	X_{n-3}&=(0,-1,0,0,1,0,\dots,0, 0), X_{n-2}=(0,-1,0,1,0,\dots,0,0),X_{n-1}=(0,-1,1,0,\dots,0).
\end{align*} 
Each of these vectors are trivially linearly independent, and $MX_{i}^{T}=(b-a)X_{i}^{T}$, for $i=1,2,\dots,n-1$. Now, consider the partition $P=\{\{1,2,\dots,n\}\}$, then its quotient matrix is not equitable as first row sum is  $1-(n-1)a$ and second row sum is $b+(n-1)a$, which are not equal in general. The quotient matrix is inequitable. Consider the partition \( P=\{\{1\},\{2,3,\dots,n\}\} \), with the corresponding quotient matrix given by $$Q=\begin{bmatrix} 1 & -(n-1)a\\ (n-1)a & b+(n-2)a \end{bmatrix}.$$
The eigenvalues of \( Q \) are $$ \frac{1}{2} \left(a n - 2 a + b + 1 \pm \sqrt{(a (-n) + 2 a - b - 1)^2 - 4 \left(a^2 n - a^2 + a n - 2 a + b\right)}\right). $$Consequently, it follows that $P=\{\{1\},\{2,3,\dots,n\}\}$ represents the minimal equitable partition and does not encompass all distinct eigenvalues of $M$, as the eigenvalue $b-a$ is absent.

We will refine the matrix \( M \) as presented in \eqref{mat 1} with \( a=2 \) and \( b=7 \), and define $$ M^{\prime}=\begin{bmatrix} 1 & -2J_{1\times (n-1)}\\ 2 J_{(n-1)\times 1} & 5I_{n-1}+2(J_{n-1}-I_{n-1}) \end{bmatrix}_{n\times n}. $$
Let \( X = (-1, 1, \ldots, 1) \) denote a vector. Consequently, $M^{\prime}X^{T}=(2n-1)X^{T}$, where $X^{T}$ denotes the transpose of $X$. Thus, $2n-1$ is an eigenvalue of $M^{\prime}$ corresponding to the eigenvector $X$. Similarly, for matrix $M^{\prime}$ in \eqref{mat 1}, we obtain $M^{\prime}X_{i}^{T}=3X_{i}^{T}$, for $i=1,2,\dots,n-1$. Consequently, the spectrum of $M^{\prime}$ is $\{2n-1,3^{n-1}\}$. Consequently, $M^{\prime}$ possesses two unique eigenvalues. Consider the partition \( P=\left \{\{1,2,\dots,n\}\right \} \). It is evident that \( M \) lacks a constant row sum, so \( P \) does not constitute an equitable partition. Next, examine an alternative partition $P^{\prime}=\left \{\{1\},\{2,3,\dots,n\}\right \}$. Then $P^{\prime}$ is equitable partition as $M^{\prime}$ has constant row sums in respective blocks. The equitable quotient matrix is 
$$ Q^{\prime}=\left[
\begin{array}{cc}
	1 & -2 (n-1) \\
	2 & 2 (n-2)+5 \\
\end{array}
\right].$$
The eigenvalues of $Q$ are $\{3,2n-1\}.$ Thus for the matrix $M^{\prime}$ and with the smallest equitable partition $P^{\prime}=\left \{\{1\},\{2,3,\dots,n\}\right \}$, its quotient matrix $Q^{\prime}$  contains all the distinct eigenvalues of matrix $M^{\prime}$. If you choose any other partition with two cell and one cell as singleton, then matrix $Q$ is not equitable. Like choose $P^{''}=\left \{\{1,2,\dots,n-1\},\{n\}\right \}$, then its associated quotient matrix is not equitable as last block does not have constant row sum. Thus, we have a class of matrix $M^{\prime}$ which satisfies the requirement of Problem \ref{problem distinct 2}. We make it precise in the following result.
\begin{proposition}
	Let $M_{n}$ be a matrix defined as
	$$ M=\begin{bmatrix}
		1 & -2J_{1\times (n-1)}\\
		2 J_{(n-1)\times 1} & 5I_{n-1}+2(J_{n-1}-I_{n-1})
	\end{bmatrix}_{n\times n}. $$
	Then  with the smallest equitable partition $P=\{\{1\},\{2,3,\dots,n\}\}$, its equitable quotient matrix $Q$ contains all the distinct eigenvalues of $M,$ where
	$$ Q^{\prime}=\left[
	\begin{array}{cc}
		1 & -2 (n-1) \\
		2 & 2 (n-2)+5 \\
	\end{array}
	\right].$$
\end{proposition}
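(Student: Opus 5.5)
The plan is to compute the full spectrum of $M$ by exhibiting a complete set of eigenvectors. Then I compute the spectrum of $Q^{\prime}$ directly and compare the two. Finally I check that $P$ is equitable and that no coarser partition is.

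\textbf{Spectrum of $M$.} I use the eigenvectors already displayed before the statement. For $X=(-1,1,\dots,1)$, the first coordinate of $MX^{T}$ is $-1-2(n-1)=-(2n-1)$. Every other coordinate is $-2+5+2(n-2)=2n-1$. Hence $MX^{T}=(2n-1)X^{T}$.

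For the vectors $X_{i}$ (each of the form $e_{k}-e_{2}$ with $k\geq 3$), the first row of $M$ gives $-2+2=0$. Inside the block $5I_{n-1}+2(J_{n-1}-I_{n-1})=3I_{n-1}+2J_{n-1}$, the $J$-part annihilates a vector with zero coordinate sum. Hence $MX_{i}^{T}=3X_{i}^{T}$.

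The vectors $X_{2},\dots,X_{n-1}$ as listed, i.e.\ $e_{3}-e_{2},\dots,e_{n}-e_{2}$, are $n-2$ linearly independent vectors. I would be careful with this count: the list in the text has one vector too many for this family. So $3$ has geometric multiplicity $n-2$. Together with $X$ this gives $n-1$ independent eigenvectors; one more eigenvalue is still to be found.

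I obtain it from the trace. We have $\operatorname{tr}M=1+5(n-1)=5n-4$, while the eigenvalues found so far sum to $(2n-1)+3(n-2)=5n-7$. So the remaining eigenvalue is $3$. Therefore $\sigma(M)=\{2n-1,3^{[n-1]}\}$, and the distinct eigenvalues are $3$ and $2n-1$ (distinct for $n\geq 3$).

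\textbf{Equitability and spectrum of $Q^{\prime}$.} The partition $P=\{\{1\},\{2,\dots,n\}\}$ is equitable. Row $1$ has block sums $1$ and $-2(n-1)$. Each row $j\geq 2$ has block sums $2$ and $5+2(n-2)=2n+1$, independent of $j$. This yields exactly $Q^{\prime}$.

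Its characteristic polynomial has trace $2n+2$ and determinant $(2n+1)+4(n-1)=6n-3=3(2n-1)$. Hence the roots are $3$ and $2n-1$, which coincide with the distinct eigenvalues of $M$. Alternatively, I can quote Lemma \ref{quotient matrix eigenvalues}'s equitable-case principle that $\sigma(Q^{\prime})\subseteq\sigma(M)$, and only rule out the possibility that $Q^{\prime}$ has $3$ or $2n-1$ as a double root. This follows from the trace $2n+2\neq 6$ and $2n+2\neq 4n-2$ for $n\geq 3$.

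\textbf{Minimality.} The one-cell partition is not equitable, since row sums are $3-2n$ for row $1$ and $2n+3$ for the other rows. So $P$ is a smallest nontrivial equitable partition in the sense of the discussion before the statement.

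If one also wants that $P$ is the only two-cell equitable partition, suppose a cell $C$ contains $1$ and some $j\geq 2$. Comparing the sums of row $1$ and row $j$ over the complementary cell forces the contradiction $-2|C^{c}|=2|C^{c}|$. The sole exception is $C^{c}=\emptyset$, which is the one-cell case already excluded.

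The main obstacle is only bookkeeping: keeping the multiplicity count correct, which I handle via the trace rather than trusting the listed vectors, and stating the restriction $n\geq 3$ under which the two eigenvalues are distinct.
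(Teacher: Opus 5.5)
Your proposal is correct and follows essentially the same route as the paper. Both arguments exhibit the eigenvectors $X=(-1,1,\dots,1)$ for $2n-1$ and $e_k-e_2$ for $3$, compute $\sigma(Q^{\prime})=\{3,2n-1\}$ directly, and check that the one-cell partition is not equitable while $P$ is. Your trace argument usefully repairs the paper's count, since its listed family $X_1,\dots,X_{n-1}$ really contains only $n-2$ independent vectors, and your argument ruling out every other two-cell partition strengthens the paper's single check of $P^{\prime\prime}$.
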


Consider another matrix defined as
$$M=\begin{bmatrix}
	\frac{1}{\alpha}J_{a} & J_{a\times b}\\
	J_{b\times a} & \alpha J_{b}
\end{bmatrix}_{a+b},$$ where $\alpha$ is real and $a,b$ are positive integers. Choose
$$X_{i}=(-a,x_{2i}, x_{3i},\dots,x_{(a+b)i}), \quad i=2,\dots,a+b,$$ where $x_{i}=\delta_{ij}$, where $ \delta_{ij}$ is Kronecker delta function. Then it is easy to see that $X_{i}$'s are the eigenvector of $M$ with related eigenvalue $0.$ So, $0$ is the eigenvalue of $M$ with multiplicity $a+b-1.$ Also, by choosing $X=\Big(\underbrace{\frac{1}{a},\dots,\frac{1}{a}}_{a},\underbrace{1,1,\dots,1}_{b}\Big)$, and evaluating $MX^{t}$, we get $\left(\frac{a+\alpha ^2 b}{\alpha }\right)X^{t}.$ It follows that $ \frac{a+\alpha ^2 b}{\alpha }$ is the simple eigenvalue of $M.$ Clearly, $M$ has no equitable quotient matrix with partition $\{1,2,\dots,a,a+1,\dots,a+b\}. $ Next, if we choose partition $P=\{\{1,2,\dots,a\},\{a+1,\dots,a+b\}\}$, we have the equitable quotient matrix $Q=\left[\begin{array}{cc}
	\frac{a}{\alpha } & b \\
	a & \alpha  b \\
\end{array}
\right],$ and its eigenvalues are $\left\{0,\frac{a+\alpha ^2 b}{\alpha }\right\}.$ So, $M$ is another class of matrices such that its equitable quotient matrix contains all the distinct eigenvalue of $M$ with the smallest possible equitable partition. Characterising all such matrices and answering Problem \ref{problem distinct 2} is very non trivial. However, even finding a one class of matrices is worth and it gives us positive hope for the partial solution of Problem \ref{problem distinct 2}.\vskip 2mm

For matrix of order $1$, that is, $M=(a)_{1}$, there is noting to prove. For $M=\begin{bmatrix}
	a & b\\
	c & d
\end{bmatrix},$ if $\leftthreetimes_{1}\neq \leftthreetimes_{2}$ are two distinct eigenvalues of $M$, there is noting to prove. Now, if $\leftthreetimes_{1} = 0, $ or $\leftthreetimes_{2}=0$, the matrix $M$ is itself the quotient matrix of order $2.$ For $\leftthreetimes_{1}=\leftthreetimes_{2}$, we get $$ (a-d)^{2}+4bc=0. $$
If $b = 0$, then $(a - d)^2 + 0 = 0  $ implies that $ a = d$ and $c$ can be anything.  For any $a, b,$ we can choose $d = a$ and $c = 0,$ 
then the eigenvalues are $a, a$. A similar situation works for $d=a$ and $b=0.$  And in that case $M$ is itself quotient matrix with the smallest possible partition $\{\{1,2\}\}$. 

Next for $3\times 3$ matrices, we consider 
$$M= \left[ \begin{array}{ccc} m_{11} & m_{12} & m_{13} \\  m_{21} & m_{22} & m_{23} \\ m_{31} & m_{32} & m_{33} \end{array} \right]. $$
If $M$ has three distinct eigenvalues, then we have to do nothing as in this case $Q$ is itself $M$. Also, $M$ can have only one eigenvalues if it is zero, scalar or triangular matrix with same diagonal entries, which we ignore due to their trivial behaviour. Assume that there is an eigenvalue $\leftthreetimes$ with multiplicity $2,$ and write $M$ in the form  
$$ M= \left[ \begin{array}{c|cc} m_{11} & m_{12} & m_{13} \\ \hline m_{21} & m_{22} & m_{23} \\ m_{31} & m_{32} & m_{33} \end{array} \right],$$
where partition is $P=\{\{1\},\{2,3\}\}.$ For this partition to be equitable, the row sums of each block must be constant. Let these constant sums be $c_{ij}$. The equitable quotient matrix is \begin{equation}\label{qmatt}
	Q = \begin{bmatrix} c_{11} & c_{12} \\ c_{21} & c_{22} \end{bmatrix},
\end{equation}
where \begin{align}\label{system eq 1}
 m_{11} = c_{11}, m_{12} + m_{13} = c_{12}, m_{21} = m_{31} = c_{21}, m_{22} + m_{23} = m_{32} + m_{33} = c_{22}.
\end{align}
In this case, each eigenvalues of $Q$ is the eigenvalues of $M$. Let $\alpha\leq \beta$ be the two real distinct eigenvalues of $Q.$ We need to modify entries of $M$ such that one eigenvalue say $\alpha$ is the repeated eigenvalue of $M.$ The eigenvector of $M$ corresponding to $\leftthreetimes_3$ must lie in the subspace orthogonal to the subspace of vectors that are constant on the cells of the partition. For our partition, this subspace is spanned by the vector $X= (0, 1, -1)^t$. Let us modify $M$ such that $X$ is an eigenvector with eigenvalue $\leftthreetimes_3= \alpha$. By evaluation $MX=\alpha X,$ we have 
\begin{equation}\label{system eq 2}
	m_{12}-m_{13}=0, m_{22}-m_{23}=\alpha, m_{32}-m_{33}=-\alpha.
\end{equation}
By comparing the system of equations in \eqref{system eq 1} and \eqref{system eq 2}, and after solving, we have
\begin{align*}
	m_{12}=m_{13}=\frac{c_{12}}{2}, m_{22}=m_{33}=\frac{c_{22}+\alpha}{2}, m_{23}=m_{32}=\frac{c_{22}-\alpha}{2}.
\end{align*}
Thus, $M$ can be written as 
\begin{equation}\label{qmat 3}
	M=\left[
\begin{array}{ccc}
	c_{11} & \frac{c_{12}}{2} & \frac{c_{12}}{2} \\
	c_{21} & \frac{1}{2} \left(\alpha+c_{22}\right) & \frac{1}{2} \left(c_{22}-\alpha\right) \\~\\
	c_{21} & \frac{1}{2} \left(c_{22}-\alpha\right) & \frac{1}{2} \left(\alpha+c_{22}\right) \\
\end{array}
\right].
\end{equation}
So for $3\time 3$ matrix $M$ given as above, the smallest equitable quotient matrix $Q$ given in \eqref{qmatt} contains all the distinct eigenvalues of $M.$ This solves the problem of distinct eigenvalues in equitable quotient matrix of matrices of order $1,2,$ and $3.$

Lets us illustrate the above fact with the help of the example. Choosing $Q$ as given below
$$ Q = \begin{bmatrix} 1 & -8 \\ 4 & 13 \end{bmatrix} .$$
The eigenvalues of $Q$ are $\{5,9\}$. For $\alpha=5$ with $c_{11}=1,c_{12}=-8,c_{21}=4$ and $c_{22}=13,$ we have
$$M=\left[
\begin{array}{ccc}
	1 & -4 & -4 \\
	4 & 9 & 4 \\
	4 & 4 & 9 \\
\end{array}
\right],$$
and its eigenvalues are $\{5^{2},9\}$. Thus, $Q$ is the smallest possible equitable matrix which contains all the distinct eigenvalues of $M.$

To check for uniqueness, consider the other two partitions of size $2$ with cell sizes $P_{1}=\{\{1,3\},\{2\}\}$ and $P_{2}=\{ \{1,2\},\{3\}\}$.
For $P_{2}$, the row sum of first block is $-3, $ and $13$, which are not constant, and $P_{2}$ is not equitable. Similarly, row sum quotient matrix for the partition $P_{1}$ are not constant, and its quotient matrix  is not equitable. Thus, we have only $\binom{3}{2}=3$ partitions with two cell. The only equitable partition is $\{\{1\},\{2,3\}\}$, and its equitable quotient matrix contains all the distinct eigenvalues of $M.$ This, idea can be generalized to matrices of order $4, 5,6$ and so on. It may be hard for matrices of larger order but it is interesting to construct their representations as in \eqref{qmat 3}, and that will contribute to Problem \ref{problem distinct 2}.

\section{Characterization of matrices with two distinct eigenvalues in quotient matrix}\label{section 3}
Generalizing the above idea for the square matrices of order $4$. Consider the following matrix with partition $\pi=\bigl\{\{1\},\{2,3,4\}\bigr\}$
	$$
	M=\left[ \begin{array}{c|ccc}
		m_{11} & m_{12} & m_{13} & m_{14}\\
		\hline
		m_{21} & m_{22} & m_{23} & m_{24}\\
		m_{31} & m_{32} & m_{33} & m_{34}\\
		m_{41} & m_{42} & m_{43} & m_{44}
	\end{array}\right].
	$$
 The partition $\pi$ is equitable for $M$ if and only if the row sums of each
	block are constant. Let these constants be $c_{ij}$, so that the equitable
	quotient matrix is
	\begin{equation}\label{eq:Q-4x4}
		Q=
		\begin{bmatrix}
			c_{11} & c_{12}\\
			c_{21} & c_{22}
		\end{bmatrix},
	\end{equation}
	where
	\begin{equation}\label{eq:system-1-4x4}
		\begin{aligned}
			m_{11}=c_{11}, m_{21}=m_{31}=m_{41}=c_{21},\\
				m_{22}+m_{23}+m_{24}=c_{22},m_{32}+m_{33}+m_{34}=c_{22},\\
				m_{42}+m_{43}+m_{44}=c_{22}, m_{12}+m_{13}+m_{14}=c_{12}
		\end{aligned}
	\end{equation}
	As $\pi$ is equitable, so each eigenvalue of $Q$ is an eigenvalue of $M$.	Let $\alpha<\beta$ be the two real distinct eigenvalues of $Q$. 	We now seek a $4\times 4$ matrix $M$ with only these two distinct
	eigenvalues, such that $\operatorname{spec}(M)=\{\alpha^{3},\beta\},$ 	and $\pi$ is an equitable partition whose quotient $Q$ contains both eigenvalues
	$\alpha,\beta$.  The partition subspace of vectors constant on the cells of $\pi$ is
	$$
	W=\{(a,b,b,b)^{\mathsf T}: a,b\in\mathbb{R}\}\subset\mathbb{R}^4,
	$$
	spanned by $e_1=(1,0,0,0)^{\mathsf T}$ and $u=(0,1,1,1)^{\mathsf T}$. Its
	orthogonal complement (with respect to the standard inner product) is
	$$
	W^\perp
	=
	\{(0,y,z,w)^{\mathsf T}: y+z+w=0\},
	$$
	which is $2$--dimensional. A convenient basis of $W^\perp$ is
	$$
	X_1=(0,1,-1,0)^{\mathsf T},\qquad
	X_2=(0,1,0,-1)^{\mathsf T}.
	$$
	To make $\alpha$ the repeated (triple) eigenvalue of $M$, we require that
	$W^\perp$ be contained in the eigenspace of $\alpha$, that is, we impose
	$$
	M X_1 = \alpha X_1,
	\qquad
	M X_2 = \alpha X_2.
	$$
	
	A direct computations from $M X_1=\alpha X_1$ and $M X_2=\alpha X_2$
	$$
		m_{12}-m_{13}=0,		m_{22}-m_{23}=\alpha,
		m_{32}-m_{33}=-\alpha,		m_{42}-m_{43}=0,
	$$
	and
	$$
		m_{12}-m_{14}=0,
		m_{22}-m_{24}=\alpha,
		m_{32}-m_{34}=0,
		m_{42}-m_{44}=-\alpha.
	$$
	Combining the above two equations with the equitability conditions given in	\eqref{eq:system-1-4x4}, we obtain
	$$
	m_{12}=m_{13}=m_{14}=\frac{c_{12}}{3}, \quad \text{and}\quad m_{21}=m_{31}=m_{41}=c_{21}.
	$$
	With these values, the $3\times 3$ block $B=(m_{ij})_{2\le i,j\le 4}$ in $M$ has the form
	\begin{equation}\label{eq:B-structure}
		B
		=
		\frac{1}{3}
		\begin{bmatrix}
			2\alpha + c_{22} & -\alpha + c_{22} & -\alpha + c_{22}\\
			-\alpha + c_{22} & 2\alpha + c_{22} & -\alpha + c_{22}\\
			-\alpha + c_{22} & -\alpha + c_{22} & 2\alpha + c_{22}
		\end{bmatrix}
		=
		\alpha I_3 + \frac{c_{22}-\alpha}{3}J_3,
	\end{equation}
	where $J_3$ is the $3\times 3$ all--ones matrix.
	Thus, the matrix $M$ can be written as
	\begin{equation}\label{eq:M-4x4-final}
		M
		=
		\begin{bmatrix}
			c_{11} & \frac{c_{12}}{3} & \frac{c_{12}}{3} & \frac{c_{12}}{3}\\
			c_{21} & \frac{2\alpha+c_{22}}{3} & \frac{c_{22}-\alpha}{3} & \frac{c_{22}-\alpha}{3}\\
			c_{21} & \frac{c_{22}-\alpha}{3} & \frac{2\alpha+c_{22}}{3} & \frac{c_{22}-\alpha}{3}\\
			c_{21} & \frac{c_{22}-\alpha}{3} & \frac{c_{22}-\alpha}{3} & \frac{2\alpha+c_{22}}{3}
		\end{bmatrix}.
	\end{equation}
	
	As the partition $\pi$ is equitable with quotient $Q$ given  in
	\eqref{eq:Q-4x4}, so the matrix $M$ leaves $W$ invariant, and the restriction
	$M|_W$ has matrix $Q$ in the basis $\{e_1,u\}$. Thus, the eigenvalues of $Q$,
	namely $\alpha$ and $\beta$, are eigenvalues of $M$ with eigenvectors in $W$. Also, the $3\times 3$ block $B$ in \eqref{eq:B-structure} satisfies
	$$
	B x = \alpha x
	\quad\text{for all }x\in\mathbb{R}^3\text{ with }x_1+x_2+x_3=0,
	$$
	since $J_3x=0$ in that case. This is exactly the image of $W^\perp$ under the
	projection onto the last three coordinates, and we have explicitly arranged $M X_1=\alpha X_1,$ and $M X_2=\alpha X_2,$	so that $W^\perp\subseteq E_\alpha$ with $\dim W^\perp=2$.
	Therefore,  $M|_W$ has eigenvalues $\alpha$ and $\beta$ (one eigenvector for each in $W$), and   $M|_{W^\perp}$ is $\alpha I$ (two independent eigenvectors in $W^\perp$).
	Hence,  $\operatorname{spec}(M)=\{\alpha^{3},\beta\},$ and $\alpha,\beta$ are precisely the eigenvalues of the smallest equitable quotient matrix $Q$, of order $2$.   Let us illustrate the above fact with the help of an example.  Consider the matrix 
	$Q=
	\begin{bmatrix}
		1 & 0\\
		3 & 4
	\end{bmatrix}.$ The eigenvalues of $Q$ are $\{1,4\}.$ We choose $\alpha=1$, the eigenvalue that will be repeated in $M$ and $\beta=4$, the simple eigenvalue of $M$. With the above notations, $	c_{11}=1,\quad c_{12}=0,\quad c_{21}=3,\quad c_{22}=4.$ Now, we construct $M$ from $Q$ and $\alpha$. Thus, we have
	$$
	M
	=
	\begin{bmatrix}
		c_{11} & \dfrac{c_{12}}{3} & \dfrac{c_{12}}{3} & \dfrac{c_{12}}{3}\\
		c_{21} & \dfrac{2\alpha+c_{22}}{3} & \dfrac{c_{22}-\alpha}{3} & \dfrac{c_{22}-\alpha}{3}\\
		c_{21} & \dfrac{c_{22}-\alpha}{3} & \dfrac{2\alpha+c_{22}}{3} & \dfrac{c_{22}-\alpha}{3}\\
		c_{21} & \dfrac{c_{22}-\alpha}{3} & \dfrac{c_{22}-\alpha}{3} & \dfrac{2\alpha+c_{22}}{3}
	\end{bmatrix},
	$$
	with $\alpha=1$, $c_{11}=1$, $c_{12}=0$, $c_{21}=3$, $c_{22}=4$, we obtain
	$$
	\frac{c_{12}}{3}=0,\quad
	\frac{2\alpha+c_{22}}{3}=\frac{2\cdot 1+4}{3}=2,\quad
	\frac{c_{22}-\alpha}{3}=\frac{4-1}{3}=1.
	$$
	Thus, the matrix $M$ is
	$$
	M
	=
	\begin{bmatrix}
		1 & 0 & 0 & 0\\
		3 & 2 & 1 & 1\\
		3 & 1 & 2 & 1\\
		3 & 1 & 1 & 2
	\end{bmatrix}.
	$$
Clearly, $Q$ is its equitable quotient matrix with partition $\pi=\bigl\{\{1\},\{2,3,4\}\bigr\},$ and the eigenvalues of $Q$ are $1$ and $4$. We now show that $1$ has multiplicity $3$ and $4$ has multiplicity $1$. The partition subspace
	$$
	W = \{(a,b,b,b)^{\mathsf T}: a,b\in\mathbb{R}\}
	= \operatorname{span}\{e_1,\;u\},
	\quad
	u=(0,1,1,1)^{\mathsf T},
	$$
	is $M$-invariant and on $W$ the matrix of $M$ (in the basis $\{e_1,u\}$) is
	exactly $Q$. Thus, $M$ has eigenvectors in $W$ for eigenvalues $1$ and $4$. For $\leftthreetimes=4$, an eigenvector of $Q$ is $(0,1)^{\mathsf T}$, giving the
	eigenvector $u=(0,1,1,1)^{\mathsf T}$	of $M$ with $M u = 4u.$ For $\leftthreetimes=1$, an eigenvector of $Q$ is $(1,-1)^{\mathsf T}$, giving $x_W = e_1 - u = (1,-1,-1,-1)^{\mathsf T},$	and $M x_W = x_W.$ So,  $x_W$ is an eigenvector of $M$ with eigenvalue $1$ lying in $W$.  The orthogonal complement of \( W \) is $$ W^\perp = \{(0,y,z,w)^{\mathsf T} : y+z+w=0\}. $$ The space is spanned by \(X_1 = (0,1,-1,0)^{\mathsf T}\) and \(X_2 = (0,1,0,-1)^{\mathsf T}\). A straightforward computation reveals that \(M X_1 = X_1\) and \(M X_2 = X_2\), indicating that \(X_1\) and \(X_2\) are eigenvectors of \(M\) corresponding to the eigenvalue \(1\), and they are orthogonal to \(W\). 
	 Thus, for the matrix $M$,  one eigenvector $u$ for the eigenvalue  $\leftthreetimes=4$, and three linearly independent eigenvectors $x_W,\;X_1,\;X_2$ for the eigenvalue $\leftthreetimes=1$.  So, the spectrum of $M$ is  $\{1^3,4\}$, and the smallest equitable quotient matrix $Q$  contains all distinct eigenvalues $1$ and $4$ of $M$.

The second possibility for the spectrum of $M$ is $\{\alpha^{[2]},\beta^{[2]}\}.$ We need to find $Q$ with smallest possible partition such that it spectrum is $\{\alpha,\beta\}.$ We consider this case in the following theorem.
\begin{theorem}\label{thm:4x4-two-eigs}
	Let $Q$ be a real $2\times 2$ matrix $Q=
	\begin{bmatrix}
		c_{11} & c_{12}\\
		c_{21} & c_{22}
	\end{bmatrix},$ 	with two distinct real eigenvalues $\sigma(Q)=\{\alpha,\beta\},$ with $ \alpha\neq\beta.$
	Define the $4\times 4$ matrix $M$ (depending on $Q,\alpha,\beta$) by
	\begin{equation}\label{eq:M-4x4}
		M=
		\begin{bmatrix}
			\displaystyle \frac{c_{11}+\alpha}{2} & \displaystyle \frac{c_{11}-\alpha}{2} & \displaystyle \frac{c_{12}}{2} & \displaystyle \frac{c_{12}}{2}\\
			\displaystyle \frac{c_{11}-\alpha}{2} & \displaystyle \frac{c_{11}+\alpha}{2} & \displaystyle \frac{c_{12}}{2} & \displaystyle \frac{c_{12}}{2}\\
			\displaystyle \frac{c_{21}}{2} & \displaystyle \frac{c_{21}}{2} & \displaystyle \frac{c_{22}+\beta}{2} & \displaystyle \frac{c_{22}-\beta}{2}\\
			\displaystyle \frac{c_{21}}{2} & \displaystyle \frac{c_{21}}{2} & \displaystyle \frac{c_{22}-\beta}{2} & \displaystyle \frac{c_{22}+\beta}{2}
		\end{bmatrix}.
	\end{equation}
	Then $Q$ contains both the distinct eigenvalues of $M$ with the smallest equitable partition  $\pi=\bigl\{\{1,2\},\{3,4\}\bigr\}.$
\end{theorem}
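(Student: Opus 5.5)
The proof follows the pattern of the $4\times 4$ construction for $\{\alpha^{[3]},\beta\}$: split $\mathbb{R}^4$ into the partition subspace $W$ and its complement, and show that $M$ acts on each piece in a controlled way. Let
$$W=\operatorname{span}\{u_1,u_2\},\qquad u_1=(1,1,0,0)^{\mathsf T},\quad u_2=(0,0,1,1)^{\mathsf T},$$
be the vectors constant on the cells of $\pi=\bigl\{\{1,2\},\{3,4\}\bigr\}$. Its orthogonal complement is
$$W^\perp=\operatorname{span}\{X_1,X_2\},\qquad X_1=(1,-1,0,0)^{\mathsf T},\quad X_2=(0,0,1,-1)^{\mathsf T}.$$

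\textbf{Step 1: $\pi$ is equitable with quotient $Q$.} Check the block row sums of $M$ directly.
\begin{itemize}
\item Block $(1,1)$: each row sums to $\frac{c_{11}+\alpha}{2}+\frac{c_{11}-\alpha}{2}=c_{11}$.
\item Block $(1,2)$: each row sums to $\frac{c_{12}}{2}+\frac{c_{12}}{2}=c_{12}$.
\item Blocks $(2,1)$ and $(2,2)$: by the same computation the rows sum to $c_{21}$ and $c_{22}$.
\end{itemize}
So $\pi$ is equitable and its quotient matrix is exactly $Q$. Hence $MW\subseteq W$, and the matrix of $M|_W$ in the basis $\{u_1,u_2\}$ is $Q$. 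By Lemma~\ref{quotient matrix eigenvalues} (or directly), $\alpha$ and $\beta$ are eigenvalues of $M$ with eigenvectors in $W$.

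\textbf{Step 2: $X_1,X_2$ are eigenvectors.} Compute $MX_1$ and $MX_2$.
\begin{itemize}
\item The first two coordinates of $MX_1$ are $\pm\bigl(\frac{c_{11}+\alpha}{2}-\frac{c_{11}-\alpha}{2}\bigr)=\pm\alpha$. The last two coordinates are $\frac{c_{21}}{2}-\frac{c_{21}}{2}=0$. So $MX_1=\alpha X_1$.
\item By the same computation, $MX_2=\beta X_2$.
\end{itemize}
Since $\mathbb{R}^4=W\oplus W^\perp$ and $\alpha\neq\beta$, $Q$ has two independent eigenvectors, which lift to eigenvectors in $W$. Together with $X_1,X_2$ these give a basis of $\mathbb{R}^4$ consisting of eigenvectors. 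Therefore $M$ is diagonalizable with $\sigma(M)=\{\alpha^{[2]},\beta^{[2]}\}$, and $Q$ contains both distinct eigenvalues. Equivalently, $\det(M-\lambda I)=\det(Q-\lambda I)(\alpha-\lambda)(\beta-\lambda)$.

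\textbf{Step 3: minimality of $\pi$.} This is the main obstacle. Since $M$ has two distinct eigenvalues, a one-cell quotient (a $1\times 1$ matrix) cannot contain both. So any partition achieving the conclusion has at least two cells, and $\pi$ has the smallest possible size. Interpreted this way, minimality is automatic.

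If the statement is also meant to claim that $\pi$ is the \emph{only} equitable two-cell partition, as in the $3\times 3$ discussion, that is not true for all parameters. Degenerate choices can make the trivial partition equitable, or make other partitions equitable, for example when $c_{12}=c_{21}$ and the diagonal blocks coincide. In that case I would only claim generic uniqueness. The check would go over the two-cell partitions of $\{1,2,3,4\}$: singletons versus triples, and the pairings $\{\{1,3\},\{2,4\}\}$ and $\{\{1,4\},\{2,3\}\}$. For each, test the row-sum conditions, which fail unless the parameters satisfy explicit equalities such as $c_{11}-\alpha=c_{12}$. I would present the proof with minimality meaning the size of the partition and remark on the degenerate cases.
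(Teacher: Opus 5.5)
Your proposal is correct and follows essentially the same route as the paper. Both arguments verify equitability through the block row sums ($MP=PQ$), use that $M|_W$ is represented by $Q$, and exhibit $X_1,X_2\in W^\perp$ as eigenvectors for $\alpha,\beta$, which gives four independent eigenvectors. Both also read minimality as ``fewest cells among equitable partitions whose quotient captures both eigenvalues,'' which is exactly the paper's argument.

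One thing to tidy: Lemma~\ref{quotient matrix eigenvalues} is stated for symmetric matrices and $M$ generally is not symmetric, so rely on your direct $MP=PQ$ argument instead. Your caveat that the one-cell partition can itself be equitable is a fair refinement the paper leaves implicit; this happens precisely when $c_{11}+c_{12}=c_{21}+c_{22}$.
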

\begin{proof}
	Let $M$ be the matrix of order $4,$ and $\pi=\{\{1,2\},\{3,4\}\}$ be the equitable partition with corresponding quotient matrix $Q$. 	The characteristic matrix of the partition $\pi=\{\{1,2\},\{3,4\}\}$ is  $P=
	\begin{bmatrix}
		1 & 0\\
		1 & 0\\
		0 & 1\\
		0 & 1
	\end{bmatrix},$	whose columns span the subspace of the vectors constant on each cell
	$$
	W = \operatorname{im}P
	=
	\{(a,a,b,b)^\mathsf{T} : a,b\in\mathbb{C}\}.
	$$
	The equitable quotient \( Q \) is defined by the equationM P equals P Q.The sum of the elements in row $1$ and row $2$ for cell $\{1,2\}$ is expressed as $a_{11}+a_{12} = \frac{c_{11}+\alpha}{2}+\frac{c_{11}-\alpha}{2}=c_{11},$ while the sum across columns $\{3,4\}$ is given by $b_{11}+b_{12} = \frac{c_{12}}{2}+\frac{c_{12}}{2}=c_{12}.$ For rows $3$ and $4$ (cell $\{3,4\}$), the sum over columns $\{1,2\}$ is \( c_{11} + c_{12} = \frac{c_{21}}{2} + \frac{c_{21}}{2} = c_{21} \). Similarly, for row 4, the sum over columns $\{3,4\}$ is \( d_{11} + d_{12} = \frac{c_{22} + \beta}{2} + \frac{c_{22} - \beta}{2} = c_{22} \). Consequently, the sums of the rows from cell $i$ to cell $j$ are equivalent to $c_{ij}$, leading to the expression
	 $$M P = \begin{bmatrix} c_{11} & c_{12}\\ c_{11} & c_{12}\\ c_{21} & c_{22}\\ c_{21} & c_{22} \end{bmatrix} = P \begin{bmatrix} c_{11} & c_{12}\\ c_{21} & c_{22} \end{bmatrix} = P Q.$$ Therefore, $\pi$ is equitable, and its equitable quotient is precisely $Q$.
	 Consider the following vectors, $v_1 = (1,1,0,0)^\mathsf{T},$ and $	v_2 = (0,0,1,1)^\mathsf{T},$ which form a basis of $W=\operatorname{im}P$. Also, the vectors $X_1 = (1,-1,0,0)^\mathsf{T},$ and $X_2 = (0,0,1,-1)^\mathsf{T},$	which form a basis of the orthogonal complement $W^\perp$. Thus, 
	$$
	\mathbb{R}^4 = W \oplus W^\perp
	= \operatorname{span}\{v_1,v_2\} \oplus \operatorname{span}\{X_1,X_2\}.
	$$
	Since $\pi$ is equitable, $W$ is $M$-invariant and the restriction
	$M|_W$ has matrix representation $Q$ in the basis $\{v_1,v_2\}$
	$$
	M v_1 = c_{11} v_1 + c_{21} v_2,\qquad
	M v_2 = c_{12} v_1 + c_{22} v_2.
	$$
	Given $\sigma(Q)=\{\alpha,\beta\}$, it follows that $M$ possesses eigenvalues $\alpha$ and $\beta$, with corresponding eigenvectors residing in $W$. A direct calculation utilizing \eqref{eq:M-4x4} yields
	\begin{align*}
		M X_1
		&=
		\begin{bmatrix}
			\frac{c_{11}+\alpha}{2} & \frac{c_{11}-\alpha}{2} & \frac{c_{12}}{2} & \frac{c_{12}}{2}\\
			\frac{c_{11}-\alpha}{2} & \frac{c_{11}+\alpha}{2} & \frac{c_{12}}{2} & \frac{c_{12}}{2}\\
			\frac{c_{21}}{2} & \frac{c_{21}}{2} & \frac{c_{22}+\beta}{2} & \frac{c_{22}-\beta}{2}\\
			\frac{c_{21}}{2} & \frac{c_{21}}{2} & \frac{c_{22}-\beta}{2} & \frac{c_{22}+\beta}{2}
		\end{bmatrix}
		\begin{bmatrix}1\\[-1pt]-1\\[-1pt]0\\[-1pt]0\end{bmatrix}=
		\begin{bmatrix}
			\frac{c_{11}+\alpha}{2} - \frac{c_{11}-\alpha}{2}\\
			\frac{c_{11}-\alpha}{2} - \frac{c_{11}+\alpha}{2}\\
			\frac{c_{21}}{2} - \frac{c_{21}}{2}\\
			\frac{c_{21}}{2} - \frac{c_{21}}{2}
		\end{bmatrix}\\
		&=
		\begin{bmatrix}
			\alpha\\-\alpha\\ 0\\0
		\end{bmatrix}
		= \alpha X_1.
	\end{align*}
	In a similar manner, $M X_2 = \beta X_2.$ Consequently, $X_1$ serves as an eigenvector of $M$ corresponding to the eigenvalue $\alpha$, whereas $X_2$ functions as an eigenvector of $M$ associated with the eigenvalue $\beta$. Since $X_1,X_2$ are linearly
	independent and lie in $W^\perp$, we have found one eigenvector for $\alpha$
	and one for $\beta$ outside $W$. Thus,  from $M|_W\sim Q$, $M$ has eigenvalues $\alpha,\beta$ with eigenvectors
		in $W$, and  from $M|_{W^\perp}$, $M$ has eigenvectors $X_1,X_2$ with eigenvalues
		$\alpha,\beta$ in $W^\perp$. 	Therefore, $M$ has exactly four linearly independent eigenvectors:
	two for $\alpha$ and two for $\beta$, and hence  $\sigma(M)=\{\alpha,\alpha,\beta,\beta\}.$ 
	
	Any non-trivial equitable quotient of $M$ must arise from an equitable
	partition with at least two cells. For the given block-symmetric structure, the
	natural $2$-cell partition $\pi=\{\{1,2\},\{3,4\}\}$ produces a quotient of
	order $2$, namely $Q$, which already contains all distinct eigenvalues
	$\alpha,\beta$ of $M$. Any equitable partition with more cells has an equitable
	quotient of order greater or equal to $ 3$, hence $Q$ is the smallest (by order) equitable
	quotient that contains all distinct eigenvalues of $M$.
\end{proof}

We illustrate Theorem~\ref{thm:4x4-two-eigs} with an example. Let $Q=
	\begin{bmatrix}
		1 & -8\\
		4 & 13
	\end{bmatrix}$ be an equitable quotient matrix with eigenvalue $\{5,9\}.$	Thus, with $\alpha = 5, \beta = 9,	c_{11}=1, c_{12}=-8, c_{21}=4,$ and $ c_{22}=13.$ 	Using formula~\eqref{eq:M-4x4} from Theorem~\ref{thm:4x4-two-eigs}, we set
	$$
	M=
	\begin{bmatrix}
		\displaystyle \frac{c_{11}+\alpha}{2} & \displaystyle \frac{c_{11}-\alpha}{2} & \displaystyle \frac{c_{12}}{2} & \displaystyle \frac{c_{12}}{2}\\
		\displaystyle \frac{c_{11}-\alpha}{2} & \displaystyle \frac{c_{11}+\alpha}{2} & \displaystyle \frac{c_{12}}{2} & \displaystyle \frac{c_{12}}{2}\\
		\displaystyle \frac{c_{21}}{2} & \displaystyle \frac{c_{21}}{2} & \displaystyle \frac{c_{22}+\beta}{2} & \displaystyle \frac{c_{22}-\beta}{2}\\
		\displaystyle \frac{c_{21}}{2} & \displaystyle \frac{c_{21}}{2} & \displaystyle \frac{c_{22}-\beta}{2} & \displaystyle \frac{c_{22}+\beta}{2}
	\end{bmatrix}.
	$$
	Substituting the values,
	\begin{align*}
	\frac{c_{11}+\alpha}{2} &= \frac{1+5}{2}=3,
	\frac{c_{11}-\alpha}{2} = \frac{1-5}{2}=-2,
	\frac{c_{12}}{2} = \frac{-8}{2}=-4,
	\frac{c_{21}}{2} = \frac{4}{2}=2,\\
	\frac{c_{22}+\beta}{2} &= \frac{13+9}{2}=11,
	\frac{c_{22}-\beta}{2} = \frac{13-9}{2}=2.
	\end{align*}
	Hence, we have $M=
	\begin{bmatrix}
		3 & -2 & -4 & -4\\
		-2 & 3 & -4 & -4\\
		2 & 2 & 11 & 2\\
		2 & 2 & 2 & 11
	\end{bmatrix}.$ The partition is  $\pi=\bigl\{\{1,2\},\{3,4\}\bigr\}$ and quotient matrix is same as $Q$. The eigenvalues of $M$ are $\{5^{[2]},9^{[2]}\}.$

Now, the only remaining case foe the $4\times 4$ matrix is with partition $\bigl\{\{1\},\{2\},\{3,4\}\bigr\} $ with three distinct eigenvalues $\{\alpha^{[2]},\beta,\gamma\}.$
\begin{theorem}\label{thm:4x4-three-eigs}
	Let $M_{4\times 4}$ be a matrix with three distinct eigenvalues $\{\alpha^{[2]},\beta,\gamma\}$. Then there exists an equitable partition with three cells, $\pi = \bigl\{\{1\},\{2\},\{3,4\}\bigr\},$ 
	such that the corresponding $3\times 3$ equitable quotient matrix $Q$ contains
	all three distinct eigenvalues $\alpha,\beta,\gamma$ of $M$. Moreover, after
	indexing the rows/columns according to this partition, $M$ can be written in
	the form
	\begin{equation}\label{eq:4x4-M-form}
		M=
		\begin{bmatrix}
			c_{11} & c_{12} & \dfrac{c_{13}}{2} & \dfrac{c_{13}}{2}\\
			c_{21} & c_{22} & \dfrac{c_{23}}{2} & \dfrac{c_{23}}{2}\\
			c_{31} & c_{32} & \dfrac{c_{33}+\alpha}{2} & \dfrac{c_{33}-\alpha}{2}\\
			c_{31} & c_{32} & \dfrac{c_{33}-\alpha}{2} & \dfrac{c_{33}+\alpha}{2}
		\end{bmatrix},
	\end{equation}
	where $Q=(c_{ij})_{1\le i,j\le 3}$ is the equitable quotient matrix and has
	eigenvalues $\{\alpha,\beta,\gamma\}$. In particular, $Q$ is the smallest
	possible equitable quotient (of order $3$) that contains all distinct
	eigenvalues of $M$.
\end{theorem}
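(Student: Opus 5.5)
Following the approach of Theorem~\ref{thm:4x4-two-eigs}, I will read the statement constructively: start from the form \eqref{eq:4x4-M-form}, built from a $3\times 3$ matrix $Q=(c_{ij})$ with distinct eigenvalues $\alpha,\beta,\gamma$, and show it has the required properties. A general $M$ with spectrum $\{\alpha^{[2]},\beta,\gamma\}$ need not admit the partition $\{\{1\},\{2\},\{3,4\}\}$. So the first step is to reduce to this form. I will assume there is an eigenvector for $\alpha$ of the form $X=(0,0,1,-1)^{\mathsf T}$ after reindexing, and that $W=\{(a,b,c,c)^{\mathsf T}\}$ is $M$-invariant. These are the same two requirements imposed in the $3\times 3$ and $4\times 4$ constructions of Section~\ref{section 3}.

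Under these requirements, equitability of $\pi=\{\{1\},\{2\},\{3,4\}\}$ gives $m_{11}=c_{11}$, $m_{12}=c_{12}$, $m_{13}+m_{14}=c_{13}$, and similarly for row $2$. It also gives $m_{31}=m_{41}=c_{31}$, $m_{32}=m_{42}=c_{32}$, and $m_{33}+m_{34}=m_{43}+m_{44}=c_{33}$. Imposing $MX=\alpha X$ adds three conditions: $m_{13}=m_{14}$, $m_{23}=m_{24}$, and $m_{33}-m_{34}=\alpha=m_{44}-m_{43}$. Solving this linear system yields exactly \eqref{eq:4x4-M-form}, just as \eqref{qmat 3} was derived.

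Next I verify the spectrum. With $P$ the $4\times 3$ characteristic matrix of $\pi$, a direct row-sum check gives $MP=PQ$. Hence $W=\operatorname{im}P$ is $M$-invariant, $M|_W$ is represented by $Q$, and so $\alpha,\beta,\gamma$ are eigenvalues of $M$ with eigenvectors in $W$. The vector $X=(0,0,1,-1)^{\mathsf T}$ spans $W^\perp$, and $MX=\alpha X$ by the computation used for $X_1$ in Theorem~\ref{thm:4x4-two-eigs}. Since $\mathbb{R}^4=W\oplus W^\perp$ and both summands are $M$-invariant, the characteristic polynomial factors as $\det(xI-Q)(x-\alpha)$. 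Therefore $\sigma(M)=\{\alpha^{[2]},\beta,\gamma\}$ and $\sigma(Q)$ contains every distinct eigenvalue. I will use this factorization rather than counting eigenvectors, because it handles multiplicity cleanly.

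Finally, minimality. A quotient of order $k$ has at most $k$ eigenvalues, and $M$ has three distinct eigenvalues, so no equitable quotient of order $1$ or $2$ can contain all of them. Hence order $3$ is optimal. The main obstacle is the ``there exists'' claim for an arbitrary $M$. It genuinely fails unless $M$ has an $\alpha$-eigenvector supported on two coordinates with opposite entries and the complementary invariance holds. I would state the theorem as holding for matrices with this structure, equivalently those similar to \eqref{eq:4x4-M-form} via a permutation, and argue the converse direction by the linear-system derivation above.
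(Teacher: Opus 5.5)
Your proposal is correct and follows essentially the same route as the paper: impose equitability of $\pi=\{\{1\},\{2\},\{3,4\}\}$ together with $MX=\alpha X$ for $X=(0,0,1,-1)^{\mathsf T}$, solve the resulting linear system to get \eqref{eq:4x4-M-form}, read off the spectrum from $M|_W\sim Q$ plus $\operatorname{span}\{X\}$, and obtain minimality because a $k\times k$ quotient has at most $k$ eigenvalues. Your caveat that the existence claim cannot hold for an arbitrary $M$ is justified (a non-diagonalizable $M$, for instance, is never of this form), since the paper's proof simply imposes the same two conditions without comment; your factorization $\det(xI-M)=\det(xI-Q)(x-\alpha)$ also pins down the multiplicities more cleanly than the paper's eigenvector count.
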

\begin{proof}
	Assume $M$ has exactly three distinct eigenvalues and one of them,
	say $\alpha$, has multiplicity $2$. Consider the partition $\pi=\bigl\{\{1\},\{2\},\{3,4\}\bigr\}$
	and write $M$ in block form as
	$$
	M=
	\left[ \begin{array}{c|c|cc}
		m_{11} & m_{12} & m_{13} & m_{14}\\
		\hline
		m_{21} & m_{22} & m_{23} & m_{24}\\
		\hline
		m_{31} & m_{32} & m_{33} & m_{34}\\
		m_{41} & m_{42} & m_{43} & m_{44}
	\end{array} \right].
	$$
	For the partition $\pi$ to be equitable, the row sums over each column-block
	must be constant within each row-block. Let these constants be $c_{ij}$, where
	$i$ indexes row-blocks and $j$ column-blocks. Then equitability gives 
	\begin{equation}\label{eq:eq-4x4-3}
		\begin{aligned}
		c_{11}&=m_{11},
		c_{12}=m_{12},
		c_{13}=m_{13}+m_{14},	c_{21}=m_{21},
		c_{22}=m_{22},
		c_{23}=m_{23}+m_{24},\\
		m_{31}&=m_{41}=c_{31},
		m_{32}=m_{42}=c_{32},
		m_{33}+m_{34}=m_{43}+m_{44}=c_{33}.
	\end{aligned}
	\end{equation}
	Thus,  the equitable quotient matrix is
	\begin{equation}\label{eq:Q-4x4}
		Q=
		\begin{bmatrix}
			c_{11} & c_{12} & c_{13}\\
			c_{21} & c_{22} & c_{23}\\
			c_{31} & c_{32} & c_{33}
		\end{bmatrix}.
	\end{equation}
	The characteristic matrix of the partition is $P=
	\begin{bmatrix}
		1 & 0 & 0\\
		0 & 1 & 0\\
		0 & 0 & 1\\
		0 & 0 & 1
	\end{bmatrix},$	so the partition subspace is
	$$
	W  = \operatorname{im}P
	=\{(x_1,x_2,y,y)^{\mathsf T}: x_1,x_2,y\in\mathbb{R}\}.
	$$
	The quotient $Q$ is the matrix of the restriction $M|_W$ in the basis given by
	the columns of $P$. Thus the eigenvalues of $Q$ are exactly the eigenvalues of
	$M$ arising from $M|_W$. The one-dimensional complement of $W$ that we want to use for the repeated
	eigenvalue is spanned by	$X = (0,0,1,-1)^{\mathsf T},$	which is orthogonal to $W$ in the sense that $X$ has zero sum on the cell
	$\{3,4\}$. For an eigenvector $X$ of $M$ associated with the repeating eigenvalue $\alpha$, we get $M X = \alpha X$, which yields
	$$
	MX =
	\begin{bmatrix}
		m_{13} - m_{14}\\
		m_{23} - m_{24}\\
		m_{33} - m_{34}\\
		m_{43} - m_{44}
	\end{bmatrix}
	=
	\alpha
	\begin{bmatrix}
		0\\
		0\\
		1\\
		-1
	\end{bmatrix}.
	$$
	From  obtain, we obtain
	\begin{equation}\label{eq:MX-alphaX}
		m_{13}-m_{14}=0,\quad
		m_{23}-m_{24}=0,\quad
		m_{33}-m_{34}=\alpha,\quad
		m_{43}-m_{44}=-\alpha.
	\end{equation}
	We will now address the equitability restrictions \eqref{eq:eq-4x4-3} and \eqref{eq:MX-alphaX}, resulting in the following:From the equations $m_{13}-m_{14}=0$ and $m_{13}+m_{14}=c_{13}$, we derive that $m_{13}=m_{14}=\frac{c_{13}}{2}.$From the equations $m_{23}-m_{24}=0$ and $m_{23}+m_{24}=c_{23}$, we deduce that $m_{23}=m_{24}=\frac{c_{23}}{2}.$ Concerning the $(3,4)$ block, we possess
	$$ m_{31}=m_{41}=c_{31},
	m_{32}=m_{42}=c_{32},
	m_{33}+m_{34}=c_{33},
	m_{43}+m_{44}=c_{33},
	$$
	and
	$$
	m_{33}-m_{34}=\alpha,
	m_{43}-m_{44}=-\alpha.
	$$
	Solving, the above entities, we get $m_{33}+m_{34}=c_{33},$ and  $m_{33}-m_{34}=\alpha$, which implies $m_{33}=\frac{c_{33}+\alpha}{2},$ and $m_{34}=\frac{c_{33}-\alpha}{2}.$ Similarly, from
	$m_{43}+m_{44}=c_{33},\quad m_{43}-m_{44}=-\alpha$, 	we obtain $m_{43}=\frac{c_{33}-\alpha}{2},$ and $ m_{44}=\frac{c_{33}+\alpha}{2}.$ Thus $M$ has the form as stated in \eqref{eq:4x4-M-form}, with the parameters
	$c_{ij}$ determining the quotient $Q$ as in \eqref{eq:Q-4x4}.  By construction, $W$ is $M$-invariant and $M|_W$ is represented by $Q$ in the	basis given by $P$. Hence the eigenvalues of $Q$ are precisely the eigenvalues	of $M$ that admit an eigenvector in $W$. Let the eigenvalues of $Q$ be $\alpha,\beta,\gamma$,	Thus, $M|_W$ has eigenvalues $\{\alpha,\beta,\gamma\}$.  On the other hand, we have enforced $MX=\alpha X$ with $X\notin W$, so $\alpha$
	is also an eigenvalue of $M$ on the $1$-dimensional subspace
	$\operatorname{span}\{X\}$. Thus the full spectrum of $M$ is $ \{\alpha^{[2]},\beta,\gamma\},$	and all three distinct eigenvalues $\alpha,\beta,\gamma$ appear in the quotient matrix $Q$. Finally, since $Q$ is a $3\times 3$ matrix, it is the smallest possible	equitable quotient that can contain three distinct eigenvalues.
\end{proof}
\begin{example}\label{ex:4x4-nontrivial}\end{example}
	We illustrate Theorem~\ref{thm:4x4-three-eigs} with and example.
	Let $Q=
	\begin{bmatrix}
		5 & -3 & 5\\
		0 &  2 & 5\\
		0 &  0 & 7
	\end{bmatrix}$ be a matrix with eigenvalues $\{5,2,7\}.$
	 So, $\alpha = 5, \beta=2,\gamma=7,$	and we have to construct a matrix $M_{4}$ whose spectrum is
$\{5^{[2]},2,7\},$	and whose smallest equitable quotient is exactly $Q$ containing all the three distinct eigenvalues of $M.$  Comparing  $c_{ij}$  with $Q$, we have $
	c_{11}=5,c_{12}=-3, c_{13}=5,
	c_{21}=0, c_{22}=2, c_{23}=5,
	c_{31}=0, c_{32}=0, $ and $ c_{33}=7.$ Thus, by Theorem~\ref{thm:4x4-three-eigs}, the $4\times 4$ matrix $M$ in with  $\pi=\bigl\{\{1\},\{2\},\{3,4\}\bigr\}$ is given by
	$$
	M=
	\begin{bmatrix}
		c_{11} & c_{12} & \dfrac{c_{13}}{2} & \dfrac{c_{13}}{2}\\
		c_{21} & c_{22} & \dfrac{c_{23}}{2} & \dfrac{c_{23}}{2}\\
		c_{31} & c_{32} & \dfrac{c_{33}+\alpha}{2} & \dfrac{c_{33}-\alpha}{2}\\
		c_{31} & c_{32} & \dfrac{c_{33}-\alpha}{2} & \dfrac{c_{33}+\alpha}{2}
	\end{bmatrix}.
	$$
	Substituting the values of $c_{ij}$ and $\alpha=5$ gives
	$$
	M=
	\begin{bmatrix}
		5 & -3 & \dfrac{5}{2} & \dfrac{5}{2}\\
		0 &  2 & \dfrac{5}{2} & \dfrac{5}{2}\\
		0 &  0 & \dfrac{7+5}{2} & \dfrac{7-5}{2}\\
		0 &  0 & \dfrac{7-5}{2} & \dfrac{7+5}{2}
	\end{bmatrix}
	=
	\begin{bmatrix}
		5 & -3 & 2.5 & 2.5\\
		0 &  2 & 2.5 & 2.5\\
		0 &  0 & 6   & 1  \\
		0 &  0 & 1   & 6
	\end{bmatrix}.
	$$
	Clearly, the partition $\pi=\bigl\{\{1\},\{2\},\{3,4\}\bigr\}$ is equitable and its quotient is $Q$ with spectrum $\sigma(Q)=\{5,2,7\}$.
Next, we generalize the above ideas for a general matrix of order $n$.

\begin{theorem}\label{thm:n-by-n-two-eigs}
	Let $n\ge 3$ and let $Q=\begin{bmatrix} c_{11} & c_{12} \\ c_{21} & c_{22} \end{bmatrix}$
	be a $2\times 2$ matrix with two distinct eigenvalues $ \alpha\neq\beta.$ If $\alpha$ is the eigenvalue of matrix $M_{n}$ with multiplicity $n-1$. Then  with equitable  partition $\pi=\bigl\{\{1\},\{2,3,\dots,n\}\bigr\}$, $M$ is given by
	\begin{equation}\label{eq:Mn-two-eigs}
		M
		=
		\begin{bmatrix}
			c_{11} & \dfrac{c_{12}}{\,n-1\,}\,\mathbf{1}_{1\times (n-1)}\\
			c_{21}\,\mathbf{1}_{(n-1)\times 1} & \alpha I_{n-1} + \dfrac{c_{22}-\alpha}{\,n-1\,}J_{n-1}
		\end{bmatrix},
	\end{equation}
	where $\mathbf{1}_{r\times s}$ is the $r\times s$ all--ones matrix, $I_{n-1}$ is the identity, and
	$J_{n-1}$ is the $(n-1)\times(n-1)$ all--ones matrix.
\end{theorem}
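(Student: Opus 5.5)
We will generalize the $4\times 4$ construction preceding this theorem, working in two directions. First, we derive the form \eqref{eq:Mn-two-eigs} from the requirements that $\pi$ be equitable with quotient $Q$ and that $\alpha$ have a large enough eigenspace off the partition subspace. Second, we verify that the resulting $M$ has spectrum $\{\alpha^{[n-1]},\beta\}$ and that $Q$ contains both distinct eigenvalues.

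\textbf{Deriving the form.} Equitability of $\pi=\{\{1\},\{2,\dots,n\}\}$ with quotient $Q$ means the following:
\begin{itemize}[label={}]
\item $m_{11}=c_{11}$ and $\sum_{j\ge 2}m_{1j}=c_{12}$;
\item $m_{i1}=c_{21}$ for all $i\ge 2$;
\item $\sum_{j\ge 2}m_{ij}=c_{22}$ for every $i\ge 2$.
\end{itemize}
The partition subspace is $W=\operatorname{span}\{e_1,u\}$ with $u=(0,1,\dots,1)^{\mathsf T}$. Its complement is $W^\perp=\{(0,y_2,\dots,y_n)^{\mathsf T}:\sum y_i=0\}$, which has dimension $n-2$ and basis $X_k=e_2-e_{k}$ for $k=3,\dots,n$.

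Since $\alpha$ appears once in $\sigma(Q)=\sigma(M|_W)$, multiplicity $n-1$ forces the remaining $n-2$ copies to come from a complementary invariant piece. Following the $4\times4$ case, we impose $MX_k=\alpha X_k$ for all $k$. The first coordinate of these equations gives $m_{12}=m_{1k}$, so every entry of the first row past the first equals $c_{12}/(n-1)$. The remaining coordinates say that the block $B=(m_{ij})_{i,j\ge2}$ acts as $\alpha I$ on the sum-zero subspace of $\mathbb{R}^{n-1}$; together with $Bu'=c_{22}u'$ for $u'=\mathbf 1_{n-1}$, this forces $B$ to be determined on a basis of $\mathbb{R}^{n-1}$. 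Hence $B=\alpha I_{n-1}+\frac{c_{22}-\alpha}{n-1}J_{n-1}$. The check is immediate: this matrix sends $u'$ to $(\alpha+c_{22}-\alpha)u'=c_{22}u'$, and it sends sum-zero vectors $x$ to $\alpha x$ because $J_{n-1}x=0$. This yields \eqref{eq:Mn-two-eigs}, and the computation shows it is the unique matrix satisfying these constraints.

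\textbf{Verifying the spectrum.} Let $P$ be the characteristic matrix of $\pi$. Then $MP=PQ$, so $W$ is $M$-invariant with $M|_W\sim Q$, and $\alpha,\beta$ are eigenvalues of $M$ with eigenvectors in $W$. The $n-2$ vectors $X_k$ are independent eigenvectors for $\alpha$ lying in $W^\perp$. Together with the eigenvectors in $W$, this gives $n$ independent eigenvectors, since $W\oplus W^\perp=\mathbb{R}^n$. Therefore $\sigma(M)=\{\alpha^{[n-1]},\beta\}$ and $M$ is diagonalizable. Both distinct eigenvalues lie in $\sigma(Q)$. Because $\alpha\ne\beta$, a one-cell quotient (order $1$) cannot contain both, so the order-$2$ quotient $Q$ is the smallest possible.

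\textbf{Main obstacle.} The delicate point is the ``necessity'' reading of the statement: an arbitrary $M$ with the given spectrum and equitable partition $\pi$ need not satisfy $W^\perp\subseteq E_\alpha$. The $\alpha$-eigenvectors could mix components from $W$ and $W^\perp$, or $M$ could fail to be diagonalizable. I would handle this by interpreting the theorem, as the paper does in the $3\times3$ and $4\times4$ cases, as requiring the $n-2$ extra $\alpha$-eigenvectors to be placed in $W^\perp$ (e.g.\ when $M$ is symmetric, or more generally when $W^\perp$ is $M$-invariant). Under that hypothesis the derivation above is forced. Otherwise, I would present the theorem as a construction with the stated verification.
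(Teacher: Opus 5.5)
Your proposal is correct and is essentially the paper's proof. The paper also verifies equitability and $W^\perp\subseteq E_\alpha$ for the given $M$. For the converse it likewise just assumes $W^\perp\subseteq E_\alpha$ before forcing the first row and $B$; it does this via $B-\alpha I_{n-1}=u\mathbf{1}^{\mathsf T}$ plus the row sums rather than your basis argument. So your caveat about the necessity reading is well founded. For example, $\begin{bmatrix}1&-2&-6\\4&7&6\\4&2&11\end{bmatrix}$ is diagonalizable with spectrum $\{5^{[2]},9\}$ and equitable quotient $\begin{bmatrix}1&-8\\4&13\end{bmatrix}$, yet it is not of the stated form.

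One small correction to your parenthetical: for $n\ge 4$, $M$-invariance of $W^\perp$ alone does not suffice, since $M|_{W^\perp}$ could be a nontrivial Jordan block at $\alpha$. You also need $M|_{W^\perp}$ to be diagonalizable, which is automatic when $M$ is symmetric or diagonalizable.
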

\begin{proof}
	Write $M\in \mathbb{C}^{n\times n}$ in block form with respect to the partition $\pi=\{\{1\},\{2,\dots,n\}\}$ as
	$$
	M
	=
	\begin{bmatrix}
		m_{11} & r^\mathsf{T}\\
		s & B
	\end{bmatrix},
	$$
	where $r,s\in\mathbb{C}^{n-1}$ and $B\in\mathbb{C}^{(n-1)\times(n-1)}$. For $\pi$ to be equitable with quotient $Q=\bigl(c_{ij}\bigr)_{1\le i,j\le 2}$, the row-sums from each cell to each cell must be constant.  From cell $C_1=\{1\}$ to $C_1$, the sum is $m_{11}$, so we must have $m_{11} = c_{11}.$  From $C_1$ to $C_2$,  the sum is  $\sum_{j=2}^n m_{1j} = \sum_{j=1}^{n-1} r_j = c_{12}.$ From $C_2$ to $C_1$, for every $i\in\{2,\dots,n\}$, the entry $m_{i1}$ must be the same, say $c_{21}$, and we get  $m_{i1}=c_{21}$ for all $i\ge 2,$ that is, $s=c_{21}\mathbf{1}$.  From $C_2$ to $C_2$, for each $i\in\{2,\dots,n\}$, the row sum across columns $2,\dots,n$ must be constant,
		$$
		\sum_{j=2}^n B_{ij} = c_{22}\quad\text{for all }i\ge 2.
		$$	
	The choices in \eqref{eq:Mn-two-eigs} now leave us with
	$$
	m_{11}=c_{11},\qquad
	r=\frac{c_{12}}{\,n-1\,}\mathbf{1}_{n-1},\quad
	B = \alpha I_{n-1} + \frac{c_{22}-\alpha}{\,n-1\,}J_{n-1}.
	$$
	So, we have
	$$
	\sum_{j=2}^n m_{1j} 
	= \sum_{j=1}^{n-1}\frac{c_{12}}{\,n-1\,}
	= c_{12}.
	$$
	Further, for each $i\ge 2$, 
	$$
	m_{i1}=c_{21},\quad
	\sum_{j=2}^n B_{ij}
	= \alpha + \frac{c_{22}-\alpha}{\,n-1\,}\cdot (n-1) 
	= \alpha + (c_{22}-\alpha)
	= c_{22}.
	$$
	As a result, the row sum from one cell to the next are $\begin{bmatrix}
		c_{11} & c_{12}\\
		c_{21} & c_{22}
	\end{bmatrix}
	=Q$. In this case, $\pi$ is equitable, and its equitable quotient matrix is $Q$.
	
	 	Let $\mathbf{1}=\mathbf{1}_{n-1}=\begin{bmatrix}1\\\vdots\\1\end{bmatrix}\in\mathbb{C}^{n-1},$ and $	u =\begin{bmatrix}0\\ \mathbf{1}\end{bmatrix}\in\mathbb{C}^{n}.$ Then  the partition subspace spanned by $e_1$ and $u$ is
	$$
	\mathcal{S}_\pi=\{(a,b,\dots,b)^\mathsf{T}:a,b\in\mathbb{C}\}
	$$
	 Its orthogonal complement of dimension $n-2$ with respect to the standard inner product is
	$$
	\mathcal{S}_\pi^\perp
	=
	\{(0,x_2,\dots,x_n)^\mathsf{T} : x_2+\dots+x_n=0\}.
	$$
	
	\smallskip
	By standard theory of equitable partitions, the restriction of $M$ to $\mathcal{S}_\pi$ in the basis $\{e_1,u\}$ is represented by the quotient matrix $Q$. Thus, the eigenvalues of $M|_{\mathcal{S}_\pi}$ are exactly $\alpha$ and $\beta$, which are same as the eigenvalues of $Q$.
	
	Let's look at what happens on $\mathcal{S}_\pi^\perp$.Let $x=(0,y)^\mathsf{T}$ where $y\in\mathbb{C}^{n-1}$ and $\mathbf{1}^\mathsf{T}y=0$. After that, we have
	$$
	Mx
	=
	\begin{bmatrix}
		c_{11} & r^\mathsf{T}\\
		s & B
	\end{bmatrix}
	\begin{bmatrix}0\\y\end{bmatrix}
	=
	\begin{bmatrix}
		r^\mathsf{T}y\\
		By
	\end{bmatrix}.
	$$
	There is no way for $r^\mathsf{T}y=0$ since $r$ is a scalar multiple of $\mathbf{1}_{n-1}$ and $y$ has no sum. Now,
	$$
	By
	=
	\left(\alpha I_{n-1} + \frac{c_{22}-\alpha}{\,n-1\,}J_{n-1}\right)y
	=
	\alpha y + \frac{c_{22}-\alpha}{\,n-1\,} J_{n-1}y.
	$$
	However, $J_{n-1}y = (\mathbf{1}^\mathsf{T}y)\mathbf{1}=0$, which means that $By=\alpha y$ and we get
	$$
	Mx = \begin{bmatrix}0\\ \alpha y\end{bmatrix} = \alpha x.
	$$
	So, each vector in $\mathcal{S}_\pi^\perp$ has an eigenvalue of $\alpha$ and is an eigenvector of $M$. Since $\dim\mathcal{S}_\pi^\perp = n-2$, this contributes $\alpha$ with geometric (and hence algebraic) multiplicity at least $n-2$. Together with the eigenvalue $\alpha$ coming from the restriction $M|_{\mathcal{S}_\pi}$ (because $Q$ has eigenvalues $\alpha,\beta$), we see that $\alpha$  has algebraic multiplicity $n-1,$	and $\beta$ appears once, coming from the other eigenvalue of $Q$. Hence, the spectrum of $M$ is  $\operatorname{spec}(M)=\{\beta,\alpha,\dots,\alpha\}.$
	
	\medskip
	Next, we shoe that $M$ must be necessarily of the form \eqref{eq:Mn-two-eigs}. Suppose $M\in\mathbb{C}^{n\times n}$ has exactly two distinct eigenvalues $\alpha$ and $\beta$ with algebraic multiplicities $n-1$ and $1$, and that the partition $\pi=\{\{1\},\{2,\dots,n\}\}$ is equitable with quotient $Q$. Let $S_\pi$ and $\mathcal{S}_\pi^\perp$ be as above, and assume $\mathcal{S}_\pi^\perp \subseteq E_\alpha.$  Recall,   $M$ as before:
	$$
	M=\begin{bmatrix}m_{11} & r^\mathsf{T}\\ s & B\end{bmatrix}.
	$$
	Equitability condition of $\pi$  forces us
	$$
	m_{11}=c_{11},\sum_{j=1}^{n-1}r_j=c_{12},
	s=c_{21}\mathbf{1},\quad \sum_{j=2}^n B_{ij}=c_{22}\ \text{for all } i.
	$$
	Now, take $x=(0,y)^\mathsf{T}\in\mathcal{S}_\pi^\perp$, so $\mathbf{1}^\mathsf{T}y=0$ and $Mx=\alpha x$.
	As before
	$$
	Mx=\begin{bmatrix}r^\mathsf{T}y\\ By\end{bmatrix}=\alpha\begin{bmatrix}0\\y\end{bmatrix}.
	$$
	So, $r^\mathsf{T}y=0$, for all $y$ with $\mathbf{1}^\mathsf{T}y=0$. This implies $r$ lies in the span of $\mathbf{1}$, that is, $r=\delta\mathbf{1}$ for some scalar $\delta$, and from the row-sum condition we obtain
	$$
	\delta = \frac{c_{12}}{\,n-1\,}.
	$$
	Similarly, $By=\alpha y$, for all $y$ with $\mathbf{1}^\mathsf{T}y=0$. This means that
	$$
	(B-\alpha I_{n-1})y=0\quad\text{for all }y\perp \mathbf{1}.
	$$
	So, it follows that $B-\alpha I_{n-1}$ has image contained in $\operatorname{span}\{\mathbf{1}\}$ and kernel containing $\{\mathbf{1}\}^\perp$. Thus $B-\alpha I_{n-1}$ is a rank--$1$ matrix of the form
	$B-\alpha I_{n-1} = u\mathbf{1}^\mathsf{T}$,	for some $u\in\mathbb{C}^{n-1}$. The equitability condition on row-sums $\sum_{j}B_{ij}=c_{22}$ gives us
	$$
	\alpha + (n-1)u_i = c_{22}\quad\text{for all }i.
	$$
	So, $u_i$ is constant, say $u_i=\gamma$ for all $i$, and $\gamma = \frac{c_{22}-\alpha}{\,n-1\,}.$	Thus, $B = \alpha I_{n-1} + \dfrac{c_{22}-\alpha}{\,n-1\,}J_{n-1}$, and we recover \eqref{eq:Mn-two-eigs}. This shows $M$ must have the claimed form.  That completes the proof.
\end{proof}

Classification of matrices even with two distinct eigenvalues as in Theorem \ref{thm:n-by-n-two-eigs} remains open at large. Like, if the eigenvalues of $M$ are $\{\alpha^{[i]},\beta^{[n-i]}\}$, $2\leq i\leq\left \lfloor\tfrac{n}{2}\right \rfloor$ with equitable partition $\pi=\{\{1,2,\dots,i\},\{i+1,\dots,n\}\}$. A similar type of classification of matrices $M$ with spectrum $\{\alpha^{[i]},\beta^{[j]},\gamma^{[n-i-j]}\}$. It is more challenging for matrices having $t\geq 3$ distinct eigenvalues.

Now, we illustrate Theorem \ref{thm:n-by-n-two-eigs} by the following example for matrix of order $10.$
\begin{example}\label{ex:n10-two-eigs}\end{example}
	Choose a $2\times 2$ quotient as $Q=
	\begin{bmatrix}
		1 & -8\\
		4 & 13
	\end{bmatrix}.$ The eigenvalues of $Q$ are $\{5,9\}.$ So, here $\beta=9$ and  $\alpha=5$, the eigenvalue we want to repeat in $M$.  Consider a  partition $	\pi=\bigl\{\{1\},\{2,3,\dots,10\}\bigr\}$ 	and apply the block construction of Theorem~\ref{thm:n-by-n-two-eigs} with $n=10$, we obtain
	$$
	M
	=
	\begin{bmatrix}
		c_{11} & \dfrac{c_{12}}{\,n-1\,}\,\mathbf{1}_{1\times (n-1)}\\
		c_{21}\,\mathbf{1}_{(n-1)\times 1} & \alpha I_{n-1} + \dfrac{c_{22}-\alpha}{\,n-1\,}J_{n-1}
	\end{bmatrix}.
	$$
	With entries of $Q$ and its spectrum, we have
	$$
	c_{11}=1,  c_{12}=-8,  c_{21}=4,  c_{22}=13,  \alpha=5,  n-1=9, \frac{c_{12}}{n-1} = \frac{-8}{9},
	\frac{c_{22}-\alpha}{n-1} = \frac{13-5}{9} = \frac{8}{9}.
	$$
	Thus, $M$ is the $10\times 10$ matrix given as
	$$
	M=
	\begin{bmatrix}
		1 & -\tfrac{8}{9} & -\tfrac{8}{9} & \cdots & -\tfrac{8}{9}\\]
		4 & & & & \\
		4 & & & & \\
		\vdots & & B & & \\
		4 & & & &
	\end{bmatrix},
	$$
	where  $B = 5I_9 + \frac{8}{9}J_9$  with its each diagonal entry  $5 + \frac{8}{9} = \frac{53}{9},$ and each off-diagonal entry is $\frac{8}{9}$.   Clearly, the partition $\pi=\{\{1\},\{2,\dots,10\}\}$ of $M$ is equitable and  its quotient matrix is  $Q=
	\begin{bmatrix}
		1 & -8\\
		4 & 13
	\end{bmatrix}.$  The partition subspace of $M$ is $\mathcal{S}_\pi
	=
	\{(a,b,\dots,b)^\mathsf{T}:a,b\in\mathbb{C}\}$, which is of  dimension $2$ and is invariant under $M$. The restriction $M|_{\mathcal{S}_\pi}$ has matrix $Q$ in the basis $\{e_1,u\}$, where $u=\begin{bmatrix}0\\1\\\vdots\\1\end{bmatrix}\in\mathbb{C}^{10}.$
	Hence $\mathcal{S}_\pi$ contributes eigenvalues $5$ and $9$ to $M$. The orthogonal complement
	$$
	\mathcal{S}_\pi^\perp
	=
	\{(0,x_2,\dots,x_{10})^\mathsf{T} : x_2+\dots+x_{10}=0\}
	$$
	has dimension $8$ and is also $M$-invariant. For any $x=(0,y)^\mathsf{T}\in\mathcal{S}_\pi^\perp$, we have
	$$
	Mx
	=
	\begin{bmatrix}
		\bigl(\tfrac{-8}{9}\mathbf{1}^\mathsf{T}\bigr)y\\
		\bigl(5I_9 + \tfrac{8}{9}J_9\bigr)y
	\end{bmatrix}
	=
	\begin{bmatrix}
		0\\
		5y
	\end{bmatrix}
	=5x,
	$$
	since $\mathbf{1}^\mathsf{T}y=0$ and $J_9y=(\mathbf{1}^\mathsf{T}y)\mathbf{1}=0$. Thus every vector in $\mathcal{S}_\pi^\perp$ is an eigenvector with eigenvalue $5$, giving $5$ with multiplicity $8$ from this subspace, plus one more $5$ from $\mathcal{S}_\pi$. Therefore, the spectrum of $M$  is $\{9,\underbrace{5,\dots,5}_{9\ \text{times}}\}.$ So, $M$ has exactly two distinct eigenvalues $5$ and $9$, and the smallest nontrivial equitable quotient matrix $Q$ contains both of them. This is precisely the $n=10$ instance of Theorem~\ref{thm:n-by-n-two-eigs}.

\section{Distinct eigenvalues of Quotient matrices}\label{section 4}
In this section, we discuss the necessary and the sufficient condition for the matrix $M$ with some equitable partition $\pi$ such that its quotient matrix contains all the distinct eigenvalues. We stat with a $4\times 4$ matrix. 
\begin{proposition}\label{prop 1}
	Let $M\in\mathbb{C}^{4\times 4}$ be a matrix with spectrum $\{\leftthreetimes_1,\leftthreetimes_2,\leftthreetimes_2,\leftthreetimes_2\},$ such that  $ \leftthreetimes_1\neq \leftthreetimes_2,$
	and let $\pi=\{C_1,\dots,C_k\}$ be an equitable partition of $\{1,2,3,4\}$ with
	characteristic matrix $P$ and equitable quotient matrix $B$, satisfying   $MP = PB.$
	Let $W=\operatorname{im}P$ be the subspace of vectors that are constant on each
	cell of $\pi$, and let $E_\leftthreetimes$ denote the eigenspace of $M$ corresponding to
	an eigenvalue $\leftthreetimes$.
	\begin{enumerate}[nosep]
		\item For any $\leftthreetimes\in\mathbb{C}$,  $\leftthreetimes\in\sigma(B)$ if and only if $E_\leftthreetimes\cap W\neq\{0\}.$	Equivalently, $\leftthreetimes$ is an eigenvalue of $B$ if and only if $M$ has an
		eigenvector for $\leftthreetimes$ whose entries are constant on each cell of $\pi$.
		\item In particular, the equitable quotient $B$ contains \emph{both} distinct
		eigenvalues $\leftthreetimes_1$ and $\leftthreetimes_2$ of $M$ if and only if $E_{\leftthreetimes_1}\cap W\neq\{0\}$
		 and	$E_{\leftthreetimes_2}\cap W\neq\{0\},$ that is, if and only if $M$ admits at least one eigenvector for each of
		$\leftthreetimes_1,\leftthreetimes_2$ that is constant on the cells of $\pi$.
		
		\item If, in addition, the eigenvalue $\leftthreetimes_2$ has geometric multiplicity
		$3$ (equivalently, $\dim E_{\leftthreetimes_2}=3$), then for any equitable partition
		$\pi$ with at least two cells we always have $E_{\leftthreetimes_2}\cap W\neq\{0\}$,
		so $\leftthreetimes_2\in\sigma(B)$ automatically. In this (diagonalizable) case, $B$
		contains both eigenvalues $\leftthreetimes_1,\leftthreetimes_2$ if and only if $E_{\leftthreetimes_1}\cap W\neq\{0\},$
		that is, the unique (up to scaling) eigenvector of $M$ for $\leftthreetimes_1$ is
		constant on each cell of $\pi$.
	\end{enumerate}

\end{proposition}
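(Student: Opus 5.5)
The plan is to prove all three parts from the single intertwining relation $MP=PB$, together with the fact that the characteristic matrix $P$ of a partition has full column rank. The columns of $P$ are indicator vectors of disjoint nonempty cells, so they are linearly independent. Hence the map $z\mapsto Pz$ is injective from $\mathbb{C}^k$ onto $W=\operatorname{im}P$, and $\dim W=k$. Everything reduces to transporting eigenvectors back and forth along this injection.

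For part 1, I will prove the two directions separately.
\begin{enumerate}[nosep]
\item Suppose $\leftthreetimes\in\sigma(B)$, and pick $z\neq 0$ with $Bz=\leftthreetimes z$. Set $v=Pz$, which is nonzero by injectivity. Then $Mv=MPz=PBz=\leftthreetimes Pz=\leftthreetimes v$, so $v\in E_\leftthreetimes\cap W\setminus\{0\}$.
\item Conversely, take $0\neq v\in E_\leftthreetimes\cap W$ and write $v=Pz$ with $z\neq 0$. Then $P(Bz)=MPz=\leftthreetimes Pz=P(\leftthreetimes z)$. Injectivity of $P$ gives $Bz=\leftthreetimes z$, so $\leftthreetimes\in\sigma(B)$.
\end{enumerate}
The reformulation in terms of eigenvectors being constant on cells is just the definition of $W$.

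Part 2 follows by applying part 1 once with $\leftthreetimes=\leftthreetimes_1$ and once with $\leftthreetimes=\leftthreetimes_2$. Here "contains both eigenvalues" is read set-theoretically, which is the sense used throughout the paper.

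For part 3, I will use a dimension count in $\mathbb{C}^4$. If $\dim E_{\leftthreetimes_2}=3$ and $\pi$ has $k\ge 2$ cells, then
$$\dim(E_{\leftthreetimes_2}\cap W)\ \ge\ \dim E_{\leftthreetimes_2}+\dim W-4\ \ge\ 3+2-4=1.$$
So $E_{\leftthreetimes_2}\cap W\neq\{0\}$, and part 1 gives $\leftthreetimes_2\in\sigma(B)$. The remaining equivalence then follows from part 2. Since $\leftthreetimes_1$ has algebraic multiplicity $1$, $\dim E_{\leftthreetimes_1}=1$. Together with $\dim E_{\leftthreetimes_2}=3$ this makes $M$ diagonalizable. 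The condition $E_{\leftthreetimes_1}\cap W\neq\{0\}$ therefore says exactly that the unique-up-to-scaling $\leftthreetimes_1$-eigenvector lies in $W$, that is, is constant on the cells.

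I do not expect a genuine obstacle. The one point needing care is the injectivity of $P$, which is where nonemptiness of the cells enters. I will also remark on why the hypothesis $k\ge 2$ is needed in part 3. When $k=1$ we have $W=\operatorname{span}\{\mathbf 1\}$, and the dimension bound degenerates to $3+1-4=0$, so the argument gives nothing.
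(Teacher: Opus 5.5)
Your proposal is correct and takes the same route as the paper. Parts 1 and 2 transport eigenvectors along $MP=PB$, using the full column rank of $P$. Part 3 uses the dimension bound $\dim(E_{\leftthreetimes_2}\cap W)\ge 3+k-4\ge 1$ for $k\ge 2$.
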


\begin{proof} Let $M\in\mathbb{C}^{4\times 4}$ be a matrix with spectrum $\{\leftthreetimes_1,\leftthreetimes_2,\leftthreetimes_2,\leftthreetimes_2\},$ such that  $ \leftthreetimes_1\neq \leftthreetimes_2,$
	and let $\pi=\{C_1,\dots,C_k\}$ be an equitable partition of $\{1,2,3,4\}$ with
	characteristic matrix $P$ and equitable quotient matrix $B$, satisfying   $MP = PB.$
	We note that $P$ is a matrix with its $j$-th column as the indicator vector of the cell
	$C_j$.   Suppose $\leftthreetimes$ is an eigenvalue of $B$ and let
	$y\neq 0$ satisfy $B y=\leftthreetimes y$. Then, we have
	$$
	M(P y)=P(B y)=\leftthreetimes (P y).
	$$
	So $x =P y$ is an eigenvector of $M$ with eigenvalue $\leftthreetimes$. By construction,
	$x$ is a linear combination of the characteristic vectors of the cells $C_j$,
	hence $x$ is constant on each cell and $x\in W$. Therefore, we obtain
	$$
	x\in E_\leftthreetimes\cap W,\qquad x\neq 0,
	$$
	and $E_\leftthreetimes\cap W\neq\{0\}$, $E_\leftthreetimes$ denote the eigenspace of $M$ corresponding to
	an eigenvalue $\leftthreetimes$.
	
	\smallskip
	Conversely, suppose that $x\in E_\leftthreetimes\cap W$ with $x\neq 0$.
	There is a number $y$ in the set $\mathbb{C}^k$ such that $x=P y$, so we have
	$$
	M P y = M x = \leftthreetimes x = \leftthreetimes P y.
	$$
	With $MP=PB$, we have $P(B y - \leftthreetimes y)=0.$ Since $P$ has full column rank, $B y=\leftthreetimes y$, which means that $\leftthreetimes\in\sigma(B)$.
	Thus, for $\leftthreetimes=\leftthreetimes_1$ and $\leftthreetimes=\leftthreetimes_2$, we see that
	$\leftthreetimes_1,\leftthreetimes_2\in\sigma(B)$ if and only if
	$$
	E_{\leftthreetimes_1}\cap W\neq\{0\}
	\quad\text{and}\quad
	E_{\leftthreetimes_2}\cap W\neq\{0\}.
	$$
	Equivalently, there exists for each $i\in\{1,2\}$ an eigenvector of $M$ for
	$\leftthreetimes_i$ that is constant on each cell of the partition $\pi$.
	
	Assume in addition that the geometric multiplicity of $\leftthreetimes_2$ is $3$, so
	$\dim E_{\leftthreetimes_2}=3$ and $\dim E_{\leftthreetimes_1}=1$, with
	$E_{\leftthreetimes_1}\oplus E_{\leftthreetimes_2}=\mathbb{C}^4$. Let $\pi$ have $k$ cells, so
	$\dim W=k$. Since $k\ge 2$ is necessary for the quotient $B$ to have at least
	two distinct eigenvalues, we restrict to $k\ge 2$. By the dimension formula for subspaces of $\mathbb{C}^4$,
	$$
	\dim(E_{\leftthreetimes_2}\cap W)
	\;\ge\;
	\dim E_{\leftthreetimes_2} + \dim W - 4
	=
	3 + k - 4
	=
	k-1.
	$$
	For $k\ge 2$ we have $k-1\ge 1$, so $E_{\leftthreetimes_2}\cap W\neq\{0\}$. Thus,
	$\leftthreetimes_2\in\sigma(B)$ for any equitable partition $\pi$ with at least two
	cells. Therefore, in this diagonalizable case, the only extra requirement for $B$ to
	contain \emph{both} eigenvalues $\leftthreetimes_1,\leftthreetimes_2$ is that $E_{\leftthreetimes_1}$
	also intersect $W$ nontrivially, that is, that the (unique up to scaling)
	eigenvector for $\leftthreetimes_1$ is constant on each cell of $\pi$.
\end{proof}
\begin{example}\label{ex:equitable-M}\end{example}
	Consider the matrix
	$$
	M =
	\begin{bmatrix}
		2 & 0 & 0 & 0 \\
		0 & 2 & 0 & 0 \\
		0 & 0 & \tfrac32 & -\tfrac12 \\
		0 & 0 & -\tfrac12 & \tfrac32
	\end{bmatrix}.
	$$
	The spectrum of $M$ is $\{2,2,2,1\},$ one of the eigenvectors for $\leftthreetimes_1=1$ is $v_1=\begin{bmatrix}0\\0\\1\\1\end{bmatrix}.$ The other eigenvectors for $\leftthreetimes_2=2$ are
	$$
	\begin{bmatrix}1\\0\\0\\0\end{bmatrix},\quad
	\begin{bmatrix}0\\1\\0\\0\end{bmatrix},\quad
	\begin{bmatrix}0\\0\\-1\\1\end{bmatrix}.
	$$
	There is no doubt that $\dim E_{\leftthreetimes_1}=1$ and $\dim E_{\leftthreetimes_2}=3$. We can now use Proposition \ref{prop 1}.
	The partition $\pi=\{\{1,2\},\{3,4\}\}$ of the index set $\{1,2,3,4\}$ is equitable.   The characteristic matrix of $\pi$ is $P=
	\begin{bmatrix}
		1 & 0\\
		1 & 0\\
		0 & 1\\
		0 & 1
	\end{bmatrix},$ and its column space, as a subspace of vectors constant on each cell $\{1,2\}$ and $\{3,4\}$ is
	$$
	W=\operatorname{im}P
	=\{\,(a,a,b,b)^{\mathsf T}: a,b\in\mathbb{C}\,\}.
	$$
	By definition of the equitable quotient $B$, the equation $MP = PB$ gives  $B=
	\begin{bmatrix}
		2 & 0\\
		0 & 1
	\end{bmatrix},$
	with its spectrum as  $\sigma(B)=\{2,1\}=\{\leftthreetimes_2,\leftthreetimes_1\}.$ 	So, $B$ indeed contains both distinct eigenvalues of $M$. Now, we  verify conditions of Proposition \ref{prop 1}. So,  $B$ contains an eigenvalue $\leftthreetimes$ of $M$ if and only if $E_\leftthreetimes\cap W\neq\{0\},$ that is, $M$ has an eigenvector for $\leftthreetimes$ that is constant on each cell of $\pi$. For $\leftthreetimes_1=1$, the eigenvector $v_1=(0,0,1,1)^{\mathsf T}$ is in $W$ (constant on each cell), since
		$$
		v_1=(0,0,1,1)^{\mathsf T} = 0\cdot(1,1,0,0)^{\mathsf T} + 1\cdot(0,0,1,1)^{\mathsf T}.
		$$
		So, $E_{\leftthreetimes_1}\cap W\neq\{0\}$ and $\leftthreetimes_1=1$ must appear in $\sigma(B)$. For $\leftthreetimes_2=2$: $\dim E_{\leftthreetimes_2}=3$ and $\dim W=2$, while the ambient space has
		$\dim\mathbb{C}^4=4$. Hence, we have
		$$
		\dim(E_{\leftthreetimes_2}\cap W)
		\;\ge\;
		\dim E_{\leftthreetimes_2}+\dim W - 4
		=
		3+2-4=1.
		$$
		So, it follows that $E_{\leftthreetimes_2}\cap W\neq\{0\}$, and $\leftthreetimes_2=2$ appears in $\sigma(B)$. Thus, this example satisfies the hypotheses and the conclusion of Proposition \ref{prop 1}, the equitable quotient matrix $B$ contains
	both distinct eigenvalues $1$ and $2$ of $M$, exactly because the eigenspace $E_{\leftthreetimes_1}$ has a
	nonzero vector constant on the partition cells (and $E_{\leftthreetimes_2}$ intersects $W$ for dimensional
	reasons).

Now, we generalize the Proposition \ref{prop 1} in the following result.
\begin{theorem}\label{thm:equitable-all-eigs-general}
	Let $M\in\mathbb{C}^{n\times n}$ have spectrum $\operatorname{spec}(M) = 	\{\leftthreetimes_1^{\,n_1},\leftthreetimes_2^{\,n_2},\dots,\leftthreetimes_t^{\,n_t}\},$
	 where $\leftthreetimes_1,\dots,\leftthreetimes_t$ are the distinct eigenvalues of $M$ with
	algebraic multiplicities $n_1,\dots,n_t$. Let $	\pi=\{C_1,\dots,C_k\}$ be an equitable partition of $\{1,\dots,n\}$ with characteristic matrix
	$P\in\mathbb{C}^{n\times k}$ (the $j$-th column of $P$ is the indicator vector
	of the cell $C_j$), and let $Q\in\mathbb{C}^{k\times k}$ be the corresponding
	equitable quotient matrix, such that  $MP = P Q.$  With $W  = \operatorname{im}P \subseteq \mathbb{C}^n,$	the subspace of vectors that are constant on each cell $C_j$, and let
	$E_{\leftthreetimes_i}$ denote the eigenspace of $M$ corresponding to $\leftthreetimes_i$. We have the following.
	
	\begin{enumerate}[nosep]
		\item For any $\leftthreetimes\in\mathbb{C}$, the eigenvalue $\leftthreetimes\in\sigma(Q)$ if and only if 
		$E_\leftthreetimes\cap W\neq\{0\}.$ 		Equivalently, $\leftthreetimes$ is an eigenvalue of $Q$ if and only if $M$ has an
		eigenvector for $\leftthreetimes$ whose entries are constant on each cell of $\pi$.
		\item In particular, the equitable quotient $Q$ contains  all the
		distinct eigenvalues $\leftthreetimes_1,\dots,\leftthreetimes_t$ of $M$ if and only if $E_{\leftthreetimes_i}\cap W\neq\{0\}$
		 for every $i=1,\dots,t,$ that is, if and only if for each $\leftthreetimes_i$ there exists an eigenvector of $M$
		for $\leftthreetimes_i$ that is constant on each cell of the equitable partition $\pi$.
	\end{enumerate}
\end{theorem}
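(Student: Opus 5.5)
The plan is to repeat the argument of Proposition \ref{prop 1}, which never used the size $4$ or the particular spectrum. It relies only on the intertwining relation $MP=PQ$ and on $P$ having full column rank. That rank property holds because the columns of $P$ are indicator vectors of nonempty, pairwise disjoint cells, so they have disjoint supports and are linearly independent. I would state this rank fact first, since it is the one point where the structure of a partition enters.

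For part 1, take $\leftthreetimes\in\sigma(Q)$ and a nonzero $y\in\mathbb{C}^k$ with $Qy=\leftthreetimes y$. Set $x=Py$. Then
\[
Mx = MPy = PQy = \leftthreetimes Py = \leftthreetimes x .
\]
Full column rank gives $x\neq 0$. Since $x$ is a linear combination of the cell indicators, $x\in W$, so $x\in E_\leftthreetimes\cap W$.

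Conversely, let $0\neq x\in E_\leftthreetimes\cap W$ and write $x=Py$; then $y\neq 0$. From
\[
PQy = MPy = Mx = \leftthreetimes x = \leftthreetimes Py
\]
we get $P(Qy-\leftthreetimes y)=0$. Injectivity of $P$ then forces $Qy=\leftthreetimes y$, so $\leftthreetimes\in\sigma(Q)$. The equivalent phrasing, that $M$ has an eigenvector for $\leftthreetimes$ which is constant on every cell, is simply the definition of $W=\operatorname{im}P$.

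Part 2 follows by applying part 1 to each $\leftthreetimes=\leftthreetimes_i$, $i=1,\dots,t$, and taking the conjunction of the resulting statements. I would also note that $\sigma(Q)\subseteq\sigma(M)$ automatically, since any $\leftthreetimes\in\sigma(Q)$ has a nonzero eigenvector in $E_\leftthreetimes$. So \emph{containing all distinct eigenvalues} is the same as requiring that the sets of distinct eigenvalues of $Q$ and $M$ coincide.

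I do not expect a genuine obstacle. The only point needing care is that no diagonalizability or symmetry is assumed. The argument therefore has to work with eigenspaces, meaning geometric eigenvectors, rather than with algebraic multiplicities, and it must not rely on an orthogonal complement of $W$ being invariant. The proof above does neither, so it holds for arbitrary complex $M$.
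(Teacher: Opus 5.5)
Your proposal is correct and matches the paper's proof step for step. Both establish full column rank of $P$ from the disjoint nonempty cells, get the forward direction from $x=Py$ and $MP=PQ$, get the converse from injectivity of $P$ applied to $P(Qy-\leftthreetimes y)=0$, and derive part 2 by applying part 1 to each $\leftthreetimes_i$.
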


\begin{proof}
	Since the cells $C_1,\dots,C_k$ are nonempty and disjoint, the columns of $P$ are linearly independent, so $P$ has full column rank and $\dim W = \dim\operatorname{im}P = k.$ 
	By definition of an equitable partition, there exists a unique $k\times k$ matrix $Q$ such that $MP = P Q,$
	 and $Q$ is called the equitable quotient of $M$ with respect to $\pi$. Let us say that $\leftthreetimes$ is an eigenvalue of $Q$. Then, there is a number $0\neq y\in\mathbb{C}^k$ such that $Qy = \leftthreetimes y.$ This means that $M(P y) = P(Q y) = \leftthreetimes (P y).$
	With $x  = P y$, as $y\neq 0$ and $P$ has full column rank, $x\neq 0$, so $x$
	is an eigenvector of $M$ with eigenvalue $\leftthreetimes$. By construction, $x$ is a
	linear combination of the characteristic vectors of the cells, hence $x$ is
	constant on each cell and $x\in W$. Thus, we get  $x\in E_\leftthreetimes\cap W,$ with $x\neq 0,$ so if gives $E_\leftthreetimes\cap W\neq\{0\}$.
	
	Conversely, suppose $0\neq x\in E_\leftthreetimes\cap W$. Since $x\in W=\operatorname{im}P$, there is a number $y\in\mathbb{C}^k$ such that $x=P y$. This means that $$ M P y = M x = \leftthreetimes x = \leftthreetimes P y.$$ With $MP = PQ$, we get $$ P(Q y - \leftthreetimes y) = 0.$$ Since $P$ has the full column rank, its kernel is simple, resulting in $Qy-\leftthreetimes y=0$ or $Qy = \leftthreetimes y.$ Since $x\neq 0$ and $x=P y$, we also have $y\neq 0$, indicating that $\leftthreetimes$ is an eigenvalue of $Q$. This establishes the equivalency in (1).

	Let $\leftthreetimes_1,\dots,\leftthreetimes_t$ be the distinct eigenvalues of $M$. Using	(1) for each $\leftthreetimes_i$, we have
	$$
	\leftthreetimes_i\in\sigma(Q)
	\quad\Longleftrightarrow\quad
	E_{\leftthreetimes_i}\cap W\neq\{0\}
	\qquad \text{for} i=1,\dots,t.
	$$
	So, we have
	$$
	\{\leftthreetimes_1,\dots,\leftthreetimes_t\}\subseteq\sigma(Q)
	\quad\Longleftrightarrow\quad
	E_{\leftthreetimes_i}\cap W\neq\{0\}\text{ for all }i=1,\dots,t.
	$$
	Equivalently, the equitable quotient $Q$ contains all distinct eigenvalues of
	$M$ if and only if for each $i$ there exists an eigenvector $x_i$ of $M$ for
	$\leftthreetimes_i$ such that $x_i$ is constant on each cell of $\pi$, that is, 
	$x_i\in W$. That proves (2) and completes the proof.
\end{proof}

	We illustrate Theorem~\ref{thm:equitable-all-eigs-general} with different set of examples.
	
	Let $M =
	\begin{bmatrix}
		0 & 0 & 1 & 1 & 1\\
		0 & 0 & 1 & 1 & 1\\
		1 & 1 & 0 & 0 & 0\\
		1 & 1 & 0 & 0 & 0\\
		1 & 1 & 0 & 0 & 0
	\end{bmatrix}$ be a matrix  with partition cells  $\{1,2\}$ and $\{3,4,5\}$.  The spectrum of $M$ is 
	$\operatorname{spec}(M)=\{\sqrt{6},\,-\sqrt{6},\,0^{[3]}\}.$  Let $\leftthreetimes_1=\sqrt{6}, \leftthreetimes_2=-\sqrt{6},$ and $ \leftthreetimes_3=0.$ Consider vectors that are constant on
	each cells, as  $v = (a,a,b,b,b)^{\mathsf T}.$  Thus, we have  $Mv =
	\begin{bmatrix}
		3b\$$2pt]3b\$$2pt]2a\$$2pt]2a\$$2pt]2a
	\end{bmatrix}.$ 	If $Mv=\leftthreetimes v$, we get  $3b = \leftthreetimes a,$ and $ 2a = \leftthreetimes b.$ Solving for $a,b$ we obtain $\leftthreetimes^2 = 6$, so it gives $\leftthreetimes=\pm\sqrt{6}$. For $\leftthreetimes=0$, the eigenspace $E_0$ has dimension $3$, since the trace is
	zero and we already have the two nonzero eigenvalues. 
	$$
	\pi:\quad C_1=\{1,2\},\qquad C_2=\{3,4,5\}.
	$$
	The characteristic matrix of $\pi=\{C_{1},C_{2}\}$ with $C_1=\{1,2\},$ and $ C_2=\{3,4,5\}$ is $P =
	\begin{bmatrix}
		1 & 0\\
		1 & 0\\
		0 & 1\\
		0 & 1\\
		0 & 1
	\end{bmatrix}.$
	The precise subspace  on the set of vectors constant on each cell $C_1,C_2$ is
	$$
	W  = \operatorname{im}P
	=\{(a,a,b,b,b)^{\mathsf T}: a,b\in\mathbb{C}\}.
	$$
	The equitable quotient $Q$ satisfies $M P = P Q$ with $	Q =
	\begin{bmatrix}
		0 & 3\\
		2 & 0
	\end{bmatrix}.$
	The eigenvalues of $Q$ are $\{\sqrt{6},\,-\sqrt{6}\}.$ Thus, the quotient matrix $Q$ contains exactly the two nonzero eigenvalues $\leftthreetimes_1,\leftthreetimes_2$ of $M$, but not $0$. Theorem~\ref{thm:equitable-all-eigs-general} implies that
	$$
	\leftthreetimes\in\sigma(Q)
	\quad\Longleftrightarrow\quad
	E_\leftthreetimes\cap W\neq\{0\}.
	$$
	For $\leftthreetimes_1=\sqrt{6}$ and $\leftthreetimes_2=-\sqrt{6}$, we obtain eigenvectors of the kind $v=(a,a,b,b,b)^{\mathsf T}\in W$ that fulfill $Mv=\leftthreetimes v$ with $\leftthreetimes^2=6$.  Hence there exist nonzero eigenvectors in $W$ for both $\leftthreetimes_1$ and $\leftthreetimes_2$, that is,  $E_{\leftthreetimes_1}\cap W\neq\{0\},$ and $E_{\leftthreetimes_2}\cap W\neq\{0\}.$ Therefore, by the theorem \ref{thm:equitable-all-eigs-general}, $\leftthreetimes_1,\leftthreetimes_2\in\sigma(Q)$, which matches
		the direct computation of $\operatorname{spec}(Q)$. For $\leftthreetimes_3=0$, we verify whether there is a nonzero vector in $W$ with eigenvalue $0$. Let \( v = (a, a, b, b, b)^{\mathsf{T}} \in W \). Consequently, $Mv = \begin{bmatrix} 3b\$$2pt]3b\$$2pt]2a\$$2pt]2a\$$2pt]2a \end{bmatrix}.$ If $Mv=0$, it follows that $3b=0$ and $2a=0$, leading to the conclusion that $a=b=0$. Thus, the only $0$-eigenvector in $W$ is the zero vector, so $E_0\cap W = \{0\}.$  Hence, by the Theorem \ref{thm:equitable-all-eigs-general}, $\leftthreetimes_3=0$ is not an eigenvalue of $Q$

Consider $M =
	\begin{bmatrix}
		2 & 0 & 0 & 0 \\
		0 & 2 & 0 & 0 \\
		0 & 0 & \tfrac32 & -\tfrac12 \\
		0 & 0 & -\tfrac12 & \tfrac32
	\end{bmatrix}$ with equitable partition $\{\{1,2\},\{3,4\}\}$, spectrum $\{2^{[3]},1\},$ ans its   eigenvectors
	$$
	(1,0,0,0)^{\mathsf T},\ (0,1,0,0)^{\mathsf T},\ (0,0,-1,1)^{\mathsf T}, (0,0,1,1)^{\mathsf T}.
	$$ Th equitable quotient matrix is  $Q=
	\begin{bmatrix}
		2 & 0\\
		0 & 1
	\end{bmatrix}$,  and its spectrum is  $\{2,1\}$
	 For $\leftthreetimes_1=2$, $x_1  = (1,1,0,0)^{\mathsf T}\in W$ and $Mx_1 = (2,2,0,0)^{\mathsf T} = 2x_1$.
 	So, $x_1\in E_2\cap W$, and hence $E_2\cap W\neq\{0\}$. For $\leftthreetimes_2=1$ $x_2  = (0,0,1,1)^{\mathsf T}\in W$	and $Mx_2 = (0,0,1,1)^{\mathsf T} = 1\cdot x_2.$
		So, $x_2\in E_1\cap W$, and hence $E_1\cap W\neq\{0\}$. Thus, for each distinct eigenvalue $\leftthreetimes_i\in\{2,1\}$, there exists an eigenvector of $M$ lying in $W$ (constant on each cell of the equitable partition $\pi$). By Theorem~\ref{thm:equitable-all-eigs-general}, $\{\leftthreetimes_1,\leftthreetimes_2\}\subseteq\sigma(Q),$ and since $Q$ is $2\times2$ with eigenvalues $\{2,1\}$, the quotient contains all distinct eigenvalues of $M$.

Consider $M=\begin{bmatrix}
		1 & -4 & -4 \\
		4 & 9 & 4 \\
		4 & 4 & 9 
	\end{bmatrix}$, and its spectrum is  $\{9,5,5\}.$  For $\leftthreetimes_1=9,$, we get  $E_{9} = \operatorname{span}\{(-1,1,1)^{\mathsf T}\}, $ and, for $\leftthreetimes_2=5$, we have a two-dimensional eigenspace, like
	$$
	E_{5} = \operatorname{span}\{(-1,1,0)^{\mathsf T},\,(-1,0,1)^{\mathsf T}\}.
	$$
	 With partition  $\{\{1\},\{2,3\}\}$, the characteristic matrix of $\pi$ is $P = \begin{bmatrix}
		1 & 0\\
		0 & 1\\
		0 & 1
	\end{bmatrix},$
	and its column space of vectors that are constant on each cell of partition is
	$$
	W = \operatorname{im}P = \{(a,b,b)^{\mathsf T} : a,b\in\mathbb{C}\}.
	$$
	
	For $\leftthreetimes_1=9$, we look for $x=(a,b,b)^{\mathsf T}\in W$ such that $Mx=9x$ gives is 
	$$
	M\begin{bmatrix}a\\ b\\ b\end{bmatrix}
	=
	\begin{bmatrix}
		1 & -4 & -4\\
		4 & 9 & 4\\
		4 & 4 & 9
	\end{bmatrix}
	\begin{bmatrix}a\\ b\\ b\end{bmatrix}
	=
	\begin{bmatrix}
		a-8b\\
		4a+13b\\
		4a+13b
	\end{bmatrix}
	=
	9\begin{bmatrix}a\\ b\\ b\end{bmatrix}
	=
	\begin{bmatrix}
		9a\\ 9b\\ 9b
	\end{bmatrix}.
	$$
	From the first coordinate, $a-8b = 9a$ implies $ a=-b.$ The second coordinate gives $4a+13b = 9b$, and then  $4(-b)+13b = 9b,$ which is trivially satisfied, and the third coordinate is identical to the second. Thus,
	$$
	E_9\cap W = \{(-b,b,b)^{\mathsf T} : b\in\mathbb{C}\}
	= \operatorname{span}\{(-1,1,1)^{\mathsf T}\}\neq\{0\}.
	$$
	For $\leftthreetimes_2=5$, we look for $x=(a,b,b)^{\mathsf T}\in W$, such that $Mx=5x$ gives
	$$
	M\begin{bmatrix}a\\ b\\ b\end{bmatrix}
	=
	\begin{bmatrix}
		a-8b\\
		4a+13b\\
		4a+13b
	\end{bmatrix}
	=
	5\begin{bmatrix}a\\ b\\ b\end{bmatrix}
	=
	\begin{bmatrix}
		5a\\ 5b\\ 5b
	\end{bmatrix}.
	$$
	From the first coordinate, we have $a-8b = 5a $, which gives $a=-2b.$	Thus,  $4a+13b = 4(-2b)+13b = -8b+13b = 5b,$ which matches the second (and third) coordinate condition trivially. Hence, we have
	$$
	E_5\cap W = \{(-2b,b,b)^{\mathsf T} : b\in\mathbb{C}\}
	= \operatorname{span}\{(-2,1,1)^{\mathsf T}\}\neq\{0\}.
	$$ 
	Thus both eigenspaces $E_9$ and $E_5$ intersect $W$ nontrivially, exactly as required in
	Theorem~\ref{thm:equitable-all-eigs-general} for the quotient to contain all
	distinct eigenvalues of $M$. Currently, the spectrum of \( Q = \begin{bmatrix} \end{bmatrix}. \) The matrix $\begin{bmatrix} 1 & -8 \\ 4 & 13 \end{bmatrix}$ has the solution set $\{9, 5\}$. The equitable quotient encompasses all distinct eigenvalues of $M$.
	Next, we have the following general example.
\begin{example}\label{ex:block-matrix-equitable}\end{example}
	Consider the following matrix of order $n$
	$$
	M=\begin{bmatrix}
		1 & -2J_{1\times (n-1)}\\
		2 J_{(n-1)\times 1} & 5I_{n-1}+2(J_{n-1}-I_{n-1})
	\end{bmatrix}_{n\times n},
	$$
	where $J_{r\times s}$ is the all--ones matrix and $I_{m}$ is the $m\times m$
	identity. We illustrate Theorem~\ref{thm:equitable-all-eigs-general} for this
	matrix. We note that
	$$
	5I_{n-1}+2(J_{n-1}-I_{n-1})
	=
	3I_{n-1}+2J_{n-1}.
	$$
	The $(n-1)\times(n-1)$ matrix $J_{n-1}$ possesses an eigenvalue of $n-1$ corresponding to the eigenvector $\mathbf{1}$, and an eigenvalue of $0$ with a multiplicity of $n-2$. The matrix $3I_{n-1}+2J_{n-1}$ possesses eigenvalues $3+2(n-1)=2n+1$ on the span of $\mathbf{1}$ and $3$ with a multiplicity of $n-2$.
	Examine vectors characterized by the following structure
	$$
	v=\begin{bmatrix}0\\ x\end{bmatrix},\qquad x\in\mathbb{C}^{n-1},\quad \mathbf{1}^\mathsf{T}x=0.
	$$
	Consequently, $J_{1\times(n-1)}x=\mathbf{1}^\mathsf{T}x=0$ and $J_{n-1}x=0$, thus we obtain
	$$
	Mv
	=
	\begin{bmatrix}
		1\cdot 0 -2J_{1\times(n-1)}x\\
		2J_{(n-1)\times1}\cdot 0 + (3I_{n-1}+2J_{n-1})x
	\end{bmatrix}
	=
	\begin{bmatrix}0\\ 3x\end{bmatrix}
	=3v.
	$$
	Consequently, $3$ is an eigenvalue with a geometric multiplicity of no less than $n-2$. Subsequently, confine $M$ to the two-dimensional subspace $U =\operatorname{span}\{e_1,\,u\}\subset\mathbb{C}^n,$ where $u  = \begin{bmatrix}0\\ \mathbf{1}\end{bmatrix},$ and $\mathbf{1}\in\mathbb{C}^{n-1}$ denotes the all-ones vector.  In the basis $\{e_1,u\}$:
	\begin{align*}
		M e_1
		&=
		\begin{bmatrix}
			1\\
			2\mathbf{1}
		\end{bmatrix}
		=1\cdot e_1 + 2\cdot u,\\
		M u
		&=
		\begin{bmatrix}
			-2J_{1\times (n-1)}\mathbf{1}\\
			(3I_{n-1}+2J_{n-1})\mathbf{1}
		\end{bmatrix}
		=
		\begin{bmatrix}
			-2(n-1)\\
			(3+2(n-1))\mathbf{1}
		\end{bmatrix}
		=
		-2(n-1)e_1 + (2n+1)u.
	\end{align*}
	Hence the matrix of $M|_U$ in this basis is $A=
	\begin{bmatrix}
		1 & -2(n-1)\\
		2 & 2n+1
	\end{bmatrix}.$ 	The   eigenvalues of $A$ are $2n-1$, and $3.$ Thus, the full spectrum of $M$ is
	$$
	\operatorname{spec}(M)=\{(2n-1)^1,\;3^{\,n-1}\},
	$$
	with eigenvalue $3$ of geometric multiplicity $n-1$ (one coming from $U$, and
	$n-2$ from the subspace $\{0\}\oplus\{\mathbf{1}^\mathsf{T}x=0\}$). Consider the equitable partition $\pi=\{C_{1},C_{2}\}$, where $C_1=\{1\},$ and  C$_2=\{2,3,\dots,n\}.$ The corresponding matrix is  $	Q=
	\begin{bmatrix}
		1 & -2(n-1)\\
		2 & 2n+1
	\end{bmatrix}.$ But $Q$ is exactly the $2\times 2$ matrix $A$ found for the restriction of $M$
	to $W$, so its spectrum is $\{2n-1,\,3\}.$ By Theorem~\ref{thm:equitable-all-eigs-general}, an eigenvalue $\leftthreetimes$ of $M$
	belongs to $\sigma(Q)$ if and only if $E_\leftthreetimes\cap W\neq\{0\}.$  For $\leftthreetimes_1 = 2n-1$, evaluating $(Q-\leftthreetimes_1 I_2)y=0$ gives an eigenvector $y=(1,-1)^\mathsf{T}$ in the basis $\{e_1,u\}$, hence an
		eigenvector of $M$ in $W$ as
		$$
		x_1 = e_1 - u = (1,-1,\dots,-1)^\mathsf{T}\in E_{\leftthreetimes_1}\cap W\setminus\{0\}.
		$$
		For $\leftthreetimes_2 = 3$, solving $(Q-3I_2)y=0$ gives an eigenvector
		$y=(-(n-1),1)^\mathsf{T}$, so
		$$
		x_2 = -(n-1)e_1 + u = (-(n-1),1,\dots,1)^\mathsf{T}\in E_{\leftthreetimes_2}\cap W\setminus\{0\}.
		$$
	Thus, $E_{2n-1}\cap W\neq\{0\}$ and $E_3\cap W\neq\{0\}$, so $Q$ contains both
	distinct eigenvalues $2n-1$ and $3$ of $M$. This exactly illustrates the
	general criterion, each distinct eigenvalue of $M$ has an eigenvector that is
	constant on the cells of the equitable partition, hence appears in the
	spectrum of the equitable quotient.

\section{Distinct eigenvalues of Quotient matrices of graph matrices}\label{section 5}
The graph matrices are uniquely associated to all types of graphs. Combinatorial properties of graphs are translated in terms of spectral properties of matrices, like measure of connectivity  of graphs, diameter, random walks and other invariants \cite{cds,BH}. We consider only simple connected graphs, dented by $G$ with vertex set $V(G)$ of order $n,$ and edge set $E.$ The number of edges adjacent to a vertex $v$ is denoted by $d_{v}$. The set of neighbors of a vertex $v$ is denoted by $N(v)$, where we exclude vertex $v$ itself.  
The length of the shortest path between two distant vertices $u$ and $v$ is defined as the distance, denoted by $d(u,v).$ A subset $S$ of vertex set $V(G)$ is said to be independent, if all vertices of $I$ are pairwise non adjacent. A vertex subset of $C$ of $V(G)$ is called a clique if every two distinct vertices are adjacent. A complete graph of order $n$ is denoted by $K_{n}$, and the complete bipartite graph of order $n$ is denoted by $K_{a,b}$, where $a$ and $b$ are partite sets. A pendent vertex is a vertex of degree one.

The adjacency matrix $A(G)$ of a simple connected graph $G$ is indexed by vertices of $G$, whose $(i,j)$-th entry is $1$ if vertices $i$ and $j$ are adjacent,  and $0$, otherwise.  Let diag$(G)$ be the diagonal matrix of vertex degrees. Then the matrices $L(G)=\text{diag}(G)-A(G)$ and $Q(G)=\text{diag}(G)+A(G)$ are Laplacian and the signless Laplacian matrices of $G.$ The distance matrix of a connected graph is indexed by vertices  of $G$, and is defined as $D(G)=(d(u,v))_{n}$. Let $Tr(G)$ be the diagonal matrix of row sums of $D(G)$. Then the distance Laplacian and the distance signless Laplacian matrices are defined as $D^{L}(G)=Tr(G)-D(G)$, and $D^{Q}(G)=Tr(G)-D(G)$, respectively. More about graph matrices can be seen in \cite{bilallapsurvey,bilalslapsurvey,cds, bilalmath}.

Motivated by Problem \ref{problem distinct 2}, the following problem is of interest:
\begin{problem}\label{problem distinct 4}
	Let $A(G)$ be the adjacency matrix of a graph $G$, and $Q$ be its associated equitable quotient with some partition $P.$ If the spectrum of $A(G)$ is $\{\leftthreetimes_{1}^{\alpha_{1}},\dots,\leftthreetimes_{k}^{\alpha_{k}}\}$ with multiplicities $\alpha_{i}\geq 1$, $\sum_{i=1}^{k}\alpha_{i}=n$, and $\leftthreetimes_{i}$'s are distinct. Then characterize the graphs such that the spectrum of the quotient matrix with the smallest possible equitable partition contains all the distinct eigenvalues of $A(G).$
\end{problem}

In general Problem \ref{problem distinct 4} seems hard, as it is very non trivial to characterize all such graph the spectrum of the quotient matrix with the smallest possible equitable partition contains all the distinct eigenvalues of its adjacency matrix. However, identifying at least one such class of graphs is itself a success. If $G$ is a $r$ regular graph, then its equitable quotient matrix with respect to adjacency matrix must contain only one eigenvalues $r$. Keep in view of non emptiness of $G$, the other eigenvalues of $A(G)$ remain missing in its finest equitable quotient matrix. So, such a graph must be non regular. That is ne clue we can move forward with.
Next, we given an example of a class graphs on $n$ vertices such that the spectrum of the quotient matrix contains all the distinct eigenvalues $\leftthreetimes_{i}$'s of $A(G)$ with the smallest possible equitable partition $P$, where by smallest means we do not want to enlarge $P$ by dividing its cells.
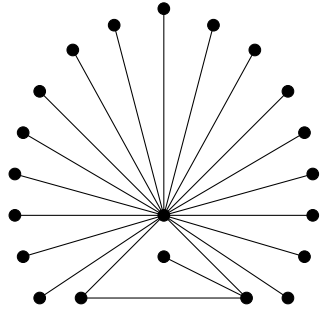
\begin{figure}[H]
	\centering
	\begin{tikzpicture}[scale=1.1]
		\draw[fill=black] (-1,0) circle (2pt); \draw[fill=black] (1,0) circle (2pt); \draw[fill=black] (0,1) circle (2pt); \draw[fill=black] (0,.5) circle (2pt);
		\draw[thin] (-1,0)--(1,0)--(0,1)--(-1,0); \draw[thin] (1,0)--(0,.5);

		\draw[fill=black] (1.5,0) circle (2pt); \draw[fill=black] (1.7,.5) circle (2pt); \draw[fill=black] (1.8,1) circle (2pt); \draw[fill=black] (1.8,1.5) circle (2pt); \draw[fill=black] (1.7,2) circle (2pt); \draw[fill=black] (1.5,2.5) circle (2pt); \draw[fill=black] (1.1,3) circle (2pt); \draw[fill=black] (.6,3.3) circle (2pt); \draw[fill=black] (0,3.5) circle (2pt);
		
		\draw[thin] (1.7,.5)--(0,1)--(1.5,0); \draw[thin] (1.8,1)--(0,1)--(1.8,1.5); \draw[thin] (1.7,2)--(0,1)--(1.5,2.5); \draw[thin] (1.1,3)--(0,1)--(.6,3.3); \draw[thin] (-.6,3.3)--(0,1)--(0,3.5);

		\draw[fill=black] (-1.5,0) circle (2pt); \draw[fill=black] (-1.7,.5) circle (2pt); \draw[fill=black] (-1.8,1) circle (2pt); \draw[fill=black] (-1.8,1.5) circle (2pt); \draw[fill=black] (-1.7,2) circle (2pt); \draw[fill=black] (-1.5,2.5) circle (2pt); \draw[fill=black] (-1.1,3) circle (2pt); \draw[fill=black] (-.6,3.3) circle (2pt); 
		
		\draw[thin] (-1.7,.5)--(0,1)--(-1.5,0); \draw[thin] (-1.8,1)--(0,1)--(-1.8,1.5); \draw[thin] (-1.7,2)--(0,1)--(-1.5,2.5); \draw[thin] (0,1)--(-1.1,3);

	\end{tikzpicture}
	\caption{Graph $G$ with $17$ pendent at one vertex of $K_{3}$, and one pendent at other vertex.}
	\label{unicyclic}
\end{figure}

Consider a graph  $G$ obtained form $K_{3}$ by attaching $a\geq 2$ number of pendents at one vertex and $1$ pendent at other vertex. The order of $G$ is $n=a+4.$ Let $u$ be the vertex of degree $a+2,$ $v$ be vertex of degree $3$, $w$ be vertex of degree $2$, $a_{i}$ be pendents for $1\leq i\leq a,$ $\{b\}$ be other pendent attached at $v$. Such a graph with $17$ pendent at one vertex of $K_{3}$, and one pendent at other vertex is shown in Figure \ref{unicyclic}.
With this vertex labelling, it follows that $\{a_{1},\dots,a_{a}\}$ is an independent set such that each $a_{i}$ share the same neighbor $u $. So, It is easy to see that  $0$ is the eigenvalue of $G$ with multiplicity at least $a-1,$ with eigenvectors $X_{i-1}=\big(0,0,0,0,1,x_{i2},x_{i3},\dots,x_{ia}\big)^{t}, 2\leq i \leq a$, where $x_{ij}=-1$ if $i=j$ and $0$, otherwise.
 The smallest equitable partition is $\{\{u\},\{v\},\{w\}, \{a_{1},\dots,a_{a}\},\{b\}\}$. With this partition, the adjacency matrix and its associated quotient matrix is 
 $$ A(G)=\left[ \begin{array}{c | c | c | c c c c | c}
	0 & 1 & 1 & 1 & 1 & \dots & 1 & 0 \\
	\hline
	1 & 0 & 1 & 0 & 0 & \dots & 0 & 1\\
	\hline
	1 & 1 & 0 & 0 & 0 &  \dots & 0 & 0\\
	\hline
	1 & 0 & 0 & 0 & 0 & \dots & 0 & 0\\
	\vdots & \vdots & \vdots & \vdots & \vdots & \ddots & \vdots & \vdots \\
	1 & 0 & 0 & 0 & 0 & \dots & 0 & 0\\
	1 & 0 & 0 & 0 & 0 & \dots & 0 & 0\\
	\hline
	0 & 1 & 0 & 0 & 0 & \dots & 0 & 0
\end{array}\right]\qquad \text{and}\qquad Q(G)=\begin{bmatrix}
	0 & 1 & 1 & a & 0\\
	1 & 0 & 1 & 0 & 1\\
	1 & 1 & 0 & 0 & 0\\
	1 & 0 & 0 & 0 & 0\\
	0 & 1 & 0 & 0 & 0
\end{bmatrix}. $$
The characteristic polynomial of $Q$ is $-x \left(x^{4}-(a+4) x^2-2 x+2 a+1\right).$ Therefore $Q$ contains all the $5$ distinct eigenvalues of $G$ with the smallest equitable partition (without enlarging any cell of partition). The other four eigenvalues of $A(G)$ cannot be precisely found. We illustrate it with the help of an example

Consider the case $a=2$. The adjacency and quotient matrices of  $G$ are 
$$ A(G)=\left[ \begin{array}{c | c | c | c c | c}
	0 & 1 & 1 & 1 & 1 & 0 \\
	\hline
	1 & 0 & 1 & 0 & 0 & 1\\
	\hline
	1 & 1 & 0 & 0 & 0 & 0\\
	\hline
	1 & 0 & 0 & 0 & 0 & 0\\
	1 & 0 & 0 & 0 & 0 & 0\\
	\hline
	0 & 1 & 0 & 0 & 0 & 0
\end{array}\right]\qquad\text{and}\qquad Q(G)=\begin{bmatrix}
	0 & 1 & 1 & 2 & 0\\
	1 & 0 & 1 & 0 & 1\\
	1 & 1 & 0 & 0 & 0\\
	1 & 0 & 0 & 0 & 0\\
	0 & 1 & 0 & 0 & 0
\end{bmatrix}. $$
The eigenvalues of $A(G)$ are $\{2.44579, 0.796815, 0^{2}, -1.37033, -1.87228\},$ and that of $Q$ are $\{2.44579,0.796815,0,-1.37033,-1.87228\}.$ We generalize this idea for weighted adjacency matrix. We recall the following definition.

The weighted adjacency matrix $\dot{A}(G)$ of a graph $G$ of order $n$ is a real symmetric matrix, which is defined as 
\begin{equation*}\label{general adjacency matrix}
	\dot{A}(G)=A_{\phi}(G)=(a_{\phi})_{n},\qquad \text{with}~ a_{ij}=\begin{cases}
		\phi(d_{u},d_{v}) & \text{if} ~ u\sim v\\
		0 & \text{otherwise},
	\end{cases}
\end{equation*}
where $u\sim v$ means adjacent relation. It is evident that the  matrix $\dot{A}(G)$ is a special type of weighted matrix with edge weights $w_{i,j}= \phi(d_{i},d_{j})$. Also note that $ w_{i,j}= \phi(d_{i},d_{j})= \phi(d_{j},d_{i})=w_{j,i}.$
The  matrix $\dot{A}(G)$ is the weighted matrix with edge weights based on vertex degrees in terms of symmetric function $w_{i,j}$. For $\phi(d_{i},d_{j})=1 $, we get the classical adjacent matrix $A(G)$, and for other values of $\phi(d_{i},d_{j})$, we obtain  the matrices like Zagreb Matrix, Sombor matrix, Geometric Arithmetic matrix, ABC matrix and many other matrices \cite{bilalmath}.

Let $G$ obtained form $K_{3}$ by attaching $a\geq 2$ number of pendents at one vertex and $1$ pendent at other vertex. With partition $\{\{u\},\{v\},\{w\}, \{a_{1},\dots,a_{a}\},\{b\}\}$, the weighted matrix can be written as
$$ \dot{A}(G)=\begin{bmatrix}
	0 & \phi(a+2,3) & \phi(a+2,2) & \phi(a+2,1)J_{1\times a} & 0\\
	\phi(a+2,3) & 0 & \phi(3,2) & 0_{1\times a} & \phi(3,1)\\
	\phi(a+2,2) & \phi(3,2) & 0 & 0_{1\times a} & 0\\
	\phi(a+2,1)J_{a\times 1} & 0_{a\times 1} & 0_{a\times 1} & 0_{a} & 0_{a\times 1}\\
	0 & \phi(3,1) & 0 & 0_{1\times a} & 0
\end{bmatrix}, $$
where $0$ is a matrix of zeros and $J$ is a matrix of all ones. As above, $0$ is the eigenvalue of $\dot{A}(G)$ with multiplicity $a-1$, with same set of eigenvectors $X_{i-1}$ for $2\leq i\leq a$. The equitable quotient matrix is given as
 $$ Q(\dot{A}(G))=\left[
 \begin{array}{ccccc}
 	0 & \phi(a+2,3) & \phi(a+2,2) & a \phi(a+2,1) & 0 \\
 	\phi(a+2,3) & 0 & \phi(3,2) & 0 & \phi(3,1) \\
 	\phi(a+2,2) & \phi(3,2) & 0 & 0 & 0 \\
 	\phi(a+2,1) & 0 & 0 & 0 & 0 \\
 	0 & \phi(3,1) & 0 & 0 & 0 \\
 \end{array}
 \right]. $$
 The characteristic polynomial of $ Q(\dot{A}(G))$ is given as
 \begin{align*}
 	-x&\Big(x^{4}-x^2 \left(\phi(a+2,3)^2+\phi(a+2,2)^2+5 \phi(a+2,1)^2+\phi(3,2)^2+\phi(3,2)^2\right)\\
 	&-2 \phi(a+2,3) \phi(a+2,2) \phi(3,2) x+\phi(a+2,2)^2 \phi(3,2)^2+5 \phi(a+2,1)^2 \phi(3,2)^2\\
 	&+5 \phi(a+2,1)^2 \phi(3,2)^2\Big).
 \end{align*} Thus, $Q(\dot{A}(G))$ contains all the distinct eigenvalues of $ \dot{A}(G)$ with the smallest possible equitable partition $\{\{u\},\{v\},\{w\}, \{a_{1},\dots,a_{a}\},\{b\}\}$. We make it precise in the following result. 
 \begin{proposition}\label{rare graph unicyclic}
 	Let $G$ be a graph obtained form $K_{3}$ by attaching $a\geq 2$ number of pendents at one vertex and $1$ pendent at other vertex. Then the quotient matrix of $G$ contains all the distinct eigenvalues of the weighted adjacency matrix $\dot{A}(G)$ with the smallest equitable partition.
 \end{proposition}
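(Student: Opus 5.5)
The plan is to apply Theorem~\ref{thm:equitable-all-eigs-general}. It reduces the claim to one check: every distinct eigenvalue of $\dot{A}(G)$ must have an eigenvector that is constant on the cells of $\pi=\{\{u\},\{v\},\{w\},\{a_1,\dots,a_a\},\{b\}\}$. We write $M=\dot{A}(G)$, let $P$ be the characteristic matrix of $\pi$, and set $W=\operatorname{im}P$, so $\dim W=5$. We assume throughout that the weights $\phi(\cdot,\cdot)$ on edges are nonzero, since otherwise the graph structure is not seen by the matrix.

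First we verify that $\pi$ is equitable and that its quotient is $Q(\dot{A}(G))$ as displayed. This is immediate, because all pendents $a_i$ have the same single neighbour $u$ with the same weight $\phi(a+2,1)$, so $MP=PQ$.

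Next we show there are no eigenvalues outside $\sigma(Q)$. Let $W'=\{(0,0,0,x,0)^{\mathsf T}: x\in\mathbb{R}^a,\ \mathbf{1}^{\mathsf T}x=0\}$, which has dimension $a-1$. For such a vector, $Mx$ has $u$-coordinate $\phi(a+2,1)\mathbf{1}^{\mathsf T}x=0$ and all other coordinates zero, so $W'\subseteq E_0$. These are the vectors $X_{i-1}$ already noted. Since $W'\perp W$ and $\dim W+\dim W'=a+4=n$, we get $\mathbb{R}^n=W\oplus W'$ with both summands $M$-invariant. Hence $\sigma(M)=\sigma(Q)\cup\{0^{[a-1]}\}$ as multisets.

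It then remains to show that $0\in\sigma(Q)$, which by Theorem~\ref{thm:equitable-all-eigs-general}(1) is the same as $E_0\cap W\neq\{0\}$. We solve $Qy=0$ directly. Rows $4$ and $5$ force $y_1=y_2=0$, and row $3$ is then automatic. Rows $1$ and $2$ become two linear equations in the three unknowns $y_3,y_4,y_5$:
$$\phi(a+2,2)y_3+a\phi(a+2,1)y_4=0,\qquad \phi(3,2)y_3+\phi(3,1)y_5=0.$$
These have a nonzero solution, for instance $y_3=1$, $y_4=-\phi(a+2,2)/(a\phi(a+2,1))$, $y_5=-\phi(3,2)/\phi(3,1)$. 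Thus $Py\in E_0\cap W$, so $0\in\sigma(Q)$. The remaining eigenvalues of $M$ come from $M|_W$ and are therefore eigenvalues of $Q$ automatically. This is consistent with the factor $-x$ in the characteristic polynomial above. So every distinct eigenvalue of $\dot{A}(G)$ lies in $\sigma(Q)$.

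Finally we argue minimality: no equitable partition with fewer cells works. The main obstacle is that with arbitrary weights $\phi$ some coarser grouping might accidentally be equitable. The intended argument is that any equitable partition must separate vertices whose weighted row sums, or patterns of neighbour cells, differ. The vertices $u,v,w$ have degrees $a+2\ge 4$, $3$ and $2$, so they lie in distinct cells for generic or positive $\phi$. The vertex $b$ is separated from the pendents $a_i$ because its neighbour $v$ lies in a different cell from $u$. The one merge that remains possible is splitting nothing further, so $\pi$ is the coarsest equitable partition. To control coincidences among weights, we would first try assuming $\phi>0$ and comparing row sums cell by cell. If that comparison is not enough, we would restrict the minimality claim to generic $\phi$, including $\phi\equiv 1$.
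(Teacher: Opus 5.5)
Your core argument is correct and is essentially the paper's. The paper also splits off the $(a-1)$-dimensional $0$-eigenspace spanned by the pendant vectors $X_{i-1}$, implicitly takes the remaining eigenvalues from $Q(\dot{A}(G))$, and gets $0\in\sigma(Q)$ from the factor $-x$ of its characteristic polynomial. Your explicit decomposition $\mathbb{R}^n=W\oplus W'$ and your null vector make this cleaner; the paper's displayed weighted polynomial has misprints, e.g.\ $5\phi(a+2,1)^2$ where $a\phi(a+2,1)^2$ is meant. This part also needs no nonvanishing assumption: $W'\subseteq E_0$ for every $\phi$, and columns $3,4,5$ of $Q$ lie in $\operatorname{span}\{e_1,e_2\}$, so $\det Q=0$ always.

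The minimality paragraph, however, is not yet an argument, and the paper offers none either; it only asserts that $\pi$ is smallest. For a weighted matrix degrees are beside the point. Vertices in a common cell must have equal weighted row sums into \emph{every} cell, and your sketch never rules out merging $\{a_1,\dots,a_a\}$ or $\{b\}$ with one of $\{u\},\{v\},\{w\}$. Write $p=\phi(a+2,3)$, $q=\phi(a+2,2)$, $r=\phi(a+2,1)$, $s=\phi(3,2)$, $t=\phi(3,1)$. A case analysis on which cell receives the pendants shows that every proper coarsening of $\pi$ forces $p+q+(a-1)r=0$, or $aq+(a-2)s=0$, or the vanishing of one of $p,q,r,s,t$. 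For example, putting the pendants with $u$ requires equal total row sums $r$ and $p+q+ar$; leaving the pendants alone isolates $u$, since only $u$ has a nonzero row sum ($ar$) into them. Hence for $\phi>0$, which covers the degree-based weights named in the paper, or for generic $\phi$, $\pi$ is indeed the coarsest equitable partition.

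Such a hypothesis is genuinely needed, not a convenience. For $a=3$ take $\phi(5,3)=-3$, $\phi(5,2)=-1$, $\phi(5,1)=\phi(3,1)=2$, $\phi(3,2)=3$. Then every row sum of $\dot{A}(G)$ equals $2$, the one-cell partition is equitable, and its quotient $(2)$ misses the eigenvalue $0$. So state and prove the minimality clause under a positivity (or genericity) assumption on $\phi$, rather than for an arbitrary symmetric $\phi$.
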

 
 There may exist more classes of graphs with the property as in Proposition \ref{rare graph unicyclic}, but characterization of such a graphs is very important in the sense that from the smallest order quotient matrix  we get the information about the spectrum of the weighted adjacency matrix of a graph of arbitrary order $n.$
 
 Proposition \ref{rare graph unicyclic} cannot be extended to signless Laplacian matrix of $G$. As the signless Laplacian matrix of $G$ is 
 $$ S(G)=\begin{bmatrix}
 	a+2 & 1 & 1 & J_{1\times a} & 0\\
 	1 & 3 & 1 & 0_{1\times a} & 1\\
 	1 & 1 & 2 & 0_{1\times a} & 0\\
 	J_{a\times 1} & 0_{a\times 1} & 0_{a\times 1} & 0_{a} & 0_{a\times 1}\\
 	0 & 1 & 0 & 0_{1\times a} & 0
 \end{bmatrix}. $$
 It is easy to see that $S(G)$ has eigenvalue $1$ with multiplicity $a-1$, due to the presence of $a$ pendents sharing same neighbor. The equitable quotient matrix of $S(G)$ is
$$ Q(S(G))=\left[
 \begin{array}{ccccc}
 	a+2 & 1 & 1 & a & 0 \\
 	1 & 3 & 1 & 0 & 1 \\
 	1 & 1 & 2 & 0 & 0 \\
 	1 & 0 & 0 & 1 & 0 \\
 	0 & 1 & 0 & 0 & 1 
 \end{array} \right]. 
$$
The characteristic polynomial of $Q(S(G))$ is  
$$x^{5}-(a+9) x^4+(6 a+27) x^3-(9 a+35) x^2+(3 a+20) x-4. $$
So, the factor $x-1$ is missing in the above polynomial. Thus, the spectrum of $Q(S(G))$ does not contain all the distinct eigenvalues of $S(G).$ Similarly,  \ref{rare graph unicyclic} cannot be extended to distance and distance signless Laplacian matrices of $G.$ 
\begin{remark}\label{remark quotinet}
	If $G$ be a graph obtained form $K_{3}$ by attaching $a\geq 2$ number of pendents at one vertex and $1$ pendent at other vertex. Then the smallest equitable quotient matrix of $G$ with the smallest possible partition does not contain all the distinct eigenvalues of the signless Laplacian, distance and distance signless Laplacian matrices of $G.$
\end{remark}
We note that Remark \eqref{remark quotinet} is equally valid for weighted signless Laplacian matrices, like $ABC$ signless Laplacian matrix, Geometric Arithmetic signless Laplacian matrix, Sombor signless Laplacian matrix and other types of  weighted signless Laplacian matrices.
\section{Distinct eigenvalues of quotient matrices in terms of equitable partitions} \label{section 6}
Furthermore, Proposition \ref{rare graph unicyclic} has no meaning for the Laplacian matrix of a graph, as the row sum is always zero. So, the smallest equitable partition is itself $\{\{1,2,\dots,n\}\}$, and its associated quotient matrix is $(0)_{1\times 1} $, which contains only one eigenvalue $0$ of $G.$ As our graphs are non trivial, that are away from edgeless graphs, so we put our attention in the next second smallest equitable partition, such that the spectrum of the quotient matrix contains all the distinct eigenvalues of the Laplacian matrix of a graph $G.$

\begin{proposition}\label{rare graph unicyclic Laplacian}
	Let $G$ be a graph obtained form $K_{3}$ by attaching $a\geq 2$ number of pendents at one vertex and $1$ pendent at other vertex. Then the quotient matrix does not contains all the distinct eigenvalues of the Laplacian (distance Laplacian) matrix $G$ with the second smallest equitable partition $P=\{\{u\},\{v\},\{w\}, \{a_{1},\dots,a_{a}\},\{b\}\}.$
\end{proposition}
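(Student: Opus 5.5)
The plan is to combine Theorem~\ref{thm:equitable-all-eigs-general} with the fact that the $a\ge 2$ pendants $a_1,\dots,a_a$ share the common neighbour $u$. By that theorem, an eigenvalue $\leftthreetimes$ of $L(G)$ (or $D^{L}(G)$) lies in the spectrum of the quotient for $P=\{\{u\},\{v\},\{w\},\{a_1,\dots,a_a\},\{b\}\}$ if and only if it has an eigenvector constant on the cells of $P$. So for each matrix we will:
\begin{enumerate}[nosep]
\item exhibit an eigenvalue coming from the twin pendants, whose eigenvectors $e_{a_i}-e_{a_j}$ sum to zero on the pendant cell;
\item show that no nonzero vector of $W=\operatorname{im}P$ belongs to its eigenspace.
\end{enumerate}
First, $P$ is equitable for both $L(G)$ and $D^{L}(G)$: vertices in the same cell are similar under the automorphisms permuting the $a_i$, so all block row sums are constant.

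For the Laplacian, $L(G)(e_{a_i}-e_{a_j})=e_{a_i}-e_{a_j}$, since each pendant has degree $1$ and the entries in row $u$ cancel. Hence $1$ is an eigenvalue of $L(G)$ with multiplicity at least $a-1\ge 1$. Now take $x=(x_u,x_v,x_w,y,\dots,y,z)^{\mathsf T}\in W$ and impose $L(G)x=x$ row by row:
\begin{itemize}[nosep]
\item a pendant row gives $-x_u+y=y$, so $x_u=0$;
\item the $b$ row gives $-x_v+z=z$, so $x_v=0$;
\item the $w$ row gives $2x_w-x_u-x_v=x_w$, so $x_w=0$;
\item the $u$ row gives $-ay=0$, so $y=0$;
\item the $v$ row gives $-z=0$, so $z=0$.
\end{itemize}
Thus $E_1\cap W=\{0\}$, and by Theorem~\ref{thm:equitable-all-eigs-general} the eigenvalue $1$ is absent from the $5\times 5$ quotient
\[
\begin{bmatrix} a+2&-1&-1&-a&0\\ -1&3&-1&0&-1\\ -1&-1&2&0&0\\ -1&0&0&1&0\\ 0&-1&0&0&1\end{bmatrix}.
\]
As a cross-check, one can verify directly that $\det(Q-I)\neq 0$.

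For the distance Laplacian we proceed in the same way. The pendants are pairwise at distance $2$. The transmission of $a_i$ is $1+2+2+2(a-1)+3=2a+6$. Evaluating $D^{L}(G)(e_{a_i}-e_{a_j})$ at $a_i$ gives $(2a+6)+2$, and all other coordinates vanish because $a_i$ and $a_j$ are equidistant from every other vertex. So $2a+8$ is an eigenvalue of $D^{L}(G)$ with multiplicity at least $a-1$.

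Next we write down the distance Laplacian quotient explicitly. The relevant distances are $d(u,v)=d(u,w)=d(v,w)=1$, $d(u,a_i)=1$, $d(v,a_i)=d(w,a_i)=2$, $d(b,v)=1$, $d(b,u)=d(b,w)=2$ and $d(b,a_i)=3$. The transmissions are then
\[
Tr(u)=a+4,\quad Tr(v)=2a+3,\quad Tr(w)=2a+4,\quad Tr(b)=3a+5,\quad Tr(a_i)=2a+6 .
\]
It then remains to show that the linear system $D^{L}x=(2a+8)x$ with $x\in W$ forces $x=0$, which is equivalent to $\det\bigl(Q_{D^L}-(2a+8)I\bigr)\neq0$ as a polynomial in $a$ for every $a\ge2$.

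This last determinant is the main obstacle, since the distance quotient is dense and the row elimination does not collapse as neatly as in the Laplacian case. I would first try the same row-by-row elimination: the pendant row reads $-x_u-2x_v-2x_w+(2a+6)y-3z-2(a-1)y=(2a+8)y$, which simplifies to $-x_u-2x_v-2x_w-3z=(2a+4)y$. The aim is to solve successively for the unknowns. If that becomes unwieldy, I would compute the characteristic polynomial of the quotient symbolically, substitute $x=2a+8$, and show the resulting polynomial in $a$ has no integer root $a\ge2$, for example by a sign or leading-term argument.
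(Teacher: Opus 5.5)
Your Laplacian half is correct and is in substance the paper's argument. The paper computes the characteristic polynomial of $Q(L(G))$ and notes that $1$ is not a root (the quartic factor equals $-a$ at $x=1$). You obtain the equivalent statement $E_1\cap W=\{0\}$, i.e.\ $\det(Q-I)=a\neq 0$, by a short elimination and Theorem~\ref{thm:equitable-all-eigs-general}. Your distances, transmissions and the eigenvalue $2a+8$ of $D^{L}(G)$ (multiplicity at least $a-1$) are also correct.

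The gap is in the distance Laplacian half. The decisive claim $2a+8\notin\sigma(Q_{D^L})$ is only announced as a plan, so that part of the proposition is not yet proved. The paper itself only says ``similarly'' here, so this is exactly the step that needs an argument. The one equation you did derive is also wrong: since $(2a+6)-2(a-1)=8$, the pendant row reads $-x_u-2x_v-2x_w-3z=2a\,y$, not $(2a+4)y$. With the correct rows the elimination is short. Take $D^{L}x=(2a+8)x$ with $x=(x_u,x_v,x_w,y,\dots,y,z)^{\mathsf T}$, and subtract the pendant row from the $v$-, $w$-, $b$- and $u$-rows. This gives
\[
3x_v-x_w-2z=0,\qquad -x_v+2x_w-z=0,\qquad x_u-x_v+ay-az=0,\qquad (a+3)x_u-x_v-x_w-ay-z=0.
\]
The first two force $x_v=x_w=z=:t$. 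The third equation and the pendant row then give $x_u+ay=(a+1)t$ and $x_u+2ay=-7t$, hence $ay=-(a+8)t$ and $x_u=(2a+9)t$. Substituting into the fourth yields $2(a+4)^2t=0$. So $t=0$ and then $x=0$, i.e.\ $E_{2a+8}\cap W=\{0\}$. Equivalently, $\det\bigl(Q_{D^L}-(2a+8)I\bigr)=10a(a+4)^2\neq 0$ for $a\ge 2$. With this computation inserted, your proof is complete.
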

\begin{proof}
	We will work with the Laplacian matrix and same idea can be generalized to the distance Laplacian matrix of $G$ (or even other graph Laplacian matrices). The Laplacian matrix of $G$ is given as
	$$ L(G)=\begin{bmatrix}
		a+2 & -1 & -1 & -J_{1\times a} & 0\\
		-1 & 3 & -1 & 0_{1\times a} & -1\\
		-1 & -1 &  2 & 0_{1\times a} & 0\\
		-J_{a\times 1} &  0_{a\times 1}  & 0_{a\times 1} &  I_{a} & 0_{a \times 1}\\
		0 &  -1 & 0 & 0_{1\times a} & 1
	\end{bmatrix}, $$
	where $I_{a}$ is an identity matrix of order $a.$
	It is easy to verify that $1$ is the eigenvalue of $L(G)$ with multiplicity $a-1$. The other eigenvalues of $L(G)$ are the eigenvalues of the following equitable quotient matrix
	$$ Q(L(G))=\begin{bmatrix}
		a+2 & -1 & -1 & -a & 0\\
		-1 & 3 & -1 & 0 & -1\\
		-1 & -1 &  2 & 0 & 0\\
		-1 &  0  & 0 &  1 & 0\\
		0 &  -1 & 0 & 0 & 1
	\end{bmatrix}, $$
	its characteristic polynomial is $$x(x^{4}-(a+9) x^3+(6 a+27) x^2-(9 a+31) x+3 a+12). $$
	The above polynomial does not contain the eigenvalue $1$. Similarly, the eigenvalue $2a+8$ of the distance Laplacian matrix of $G$ is the not eigenvalue of the quotient matrix  with the second smallest equitable partition $P.$
\end{proof}

But in the generalized partition in which one cell is divided into two, we have the following result.
\begin{proposition}
	Let $M$ be the given in Proposition \ref{Qmat contains distinct eigenvalues of M}. Then the spectrum of its quotient matrix contains all the distinct eigenvalues of $M$ with the second smallest possible partition $P=\{\{1\},\{2\},\{3,\dots,n+2\}\}.$
\end{proposition}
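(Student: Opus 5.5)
The plan is to split $\mathbb{C}^{n+2}$ into the partition subspace of $P=\{\{1\},\{2\},\{3,\dots,n+2\}\}$ and a complementary $M$-invariant subspace on which $M$ acts as a scalar. We then check directly that this scalar is also an eigenvalue of $Q$. Throughout, $M$ is the matrix \eqref{matrix containing all distinct eigenvalues}, of order $n+2$.

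First we verify that $P$ is equitable and that its quotient is exactly the matrix $Q$ of \eqref{Qmat of special M}.
\begin{itemize}
\item Rows $3,\dots,n+2$ all have entry $-1$ in column $1$ and entry $-2(2n-2)$ in column $2$.
\item The sum of such a row over columns $3,\dots,n+2$ is $(8n-7)-4(n-1)=4n-3$.
\item Row $1$ sums to $-2n$ over the third cell, and row $2$ sums to $-4n$ over it.
\end{itemize}
Hence $MP_0=P_0Q$, where $P_0$ is the $(n+2)\times 3$ characteristic matrix of $P$. So $W=\operatorname{im}P_0$ is $M$-invariant, and $M|_W$ is represented by $Q$.

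Next we treat the complement. Let $U=\{(0,0,y)^{\mathsf T}: y\in\mathbb{C}^{n},\ \mathbf{1}^{\mathsf T}y=0\}$, which has dimension $n-1$ and satisfies $U\cap W=\{0\}$ and $W\oplus U=\mathbb{C}^{n+2}$. For $x=(0,0,y)^{\mathsf T}\in U$:
\begin{itemize}
\item the first entry of $Mx$ is $-2\,\mathbf{1}^{\mathsf T}y=0$;
\item the second entry is $-4\,\mathbf{1}^{\mathsf T}y=0$;
\item the last block is $\bigl((8n-3)I_n-4J_n\bigr)y=(8n-3)y$.
\end{itemize}
So $M|_U=(8n-3)I$. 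In the basis adapted to $W\oplus U$, $M$ is block diagonal. Therefore its characteristic polynomial is $\det(xI-Q)\,(x-(8n-3))^{n-1}$, and the distinct eigenvalues of $M$ are the eigenvalues of $Q$ together with $8n-3$. This argument does not use symmetry of $M$, which matters because $M$ is not symmetric.

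The only remaining point, and the real content, is to show that $8n-3\in\sigma(Q)$. We would do this by computing $Q-(8n-3)I$ directly. Its second row is $\bigl(-1,\ 4n+1-(8n-3),\ -4n\bigr)=(-1,\ -(4n-4),\ -4n)$. Its third row is $\bigl(-1,\ -2(2n-2),\ 4n-3-(8n-3)\bigr)=(-1,\ -(4n-4),\ -4n)$. The two rows coincide, so $\det\bigl(Q-(8n-3)I\bigr)=0$. Hence $8n-3$ is an eigenvalue of $Q$.

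Equivalently, by Theorem~\ref{thm:equitable-all-eigs-general}, $E_{8n-3}\cap W\neq\{0\}$: an eigenvector is $P_0y$, where $y$ spans the kernel found above. Consequently every distinct eigenvalue of $M$ lies in $\sigma(Q)$, which proves the claim. The main possible obstacle is this coincidence step; the row identity resolves it cleanly. If one also wants $P$ to be minimal among partitions containing all distinct eigenvalues, one would add the remark already made in the paper: the only coarser equitable partition has quotient $(0)_{1\times 1}$.
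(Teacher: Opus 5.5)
Your proof of the spectral containment is correct. It also supplies something the paper lacks: the paper gives no proof of this proposition. It simply restates Proposition~\ref{Qmat contains distinct eigenvalues of M}, itself quoted from \cite{bilalijpam} without proof, with the partition written out. You use the same invariant-subspace splitting as the paper's proof of Theorem~\ref{thm:n-by-n-two-eigs}:
\begin{itemize}
\item $M|_W$ is represented by $Q$;
\item $M$ acts as $(8n-3)I$ on $U=W^{\perp}$, an invariance you rightly check by hand because $M$ is not symmetric;
\item rows $2$ and $3$ of $Q-(8n-3)I$ coincide, which is the one fact the citation hides.
\end{itemize}
The argument also gives the full spectrum cheaply. $Q$ has zero row sums, so $0\in\sigma(Q)$, and the trace $12n-4$ then forces the third eigenvalue to be $4n-1$. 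Hence $\sigma(M)=\{0,\,4n-1,\,(8n-3)^{[n]}\}$.

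Your closing remark, however, is false. The partition $\{\{1\},\{2,\dots,n+2\}\}$ is equitable:
\begin{itemize}
\item rows $2,\dots,n+2$ all have entry $-1$ in column $1$;
\item every row of $M$ sums to $0$, so each of these rows sums to $1$ over the second cell.
\end{itemize}
Its quotient $\begin{bmatrix}4n-2 & -(4n-2)\\ -1 & 1\end{bmatrix}$ has spectrum $\{0,4n-1\}$ and misses $8n-3$. So the trivial partition is not the only equitable coarsening of $P$.

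For the same reason, the phrase ``second smallest'' in the statement cannot be taken literally. The chain of equitable coarsenings runs from the one-cell partition, to $\{\{1\},\{2,\dots,n+2\}\}$, to $P$.

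Minimality of $P$ should instead be argued by counting. $M$ has three distinct eigenvalues, so any equitable quotient containing all of them has order at least $3$. Every proper coarsening of $P$ has at most two cells, so none of them qualifies.
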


\begin{remark}
	We are not sure, if $M$ given in Proposition \ref{Qmat contains distinct eigenvalues of M} is the distance Laplacian matrix of some graph, as it is a special case of the matrix (which is different from graph Laplacian matrix) given in Remark 2.9 \cite{bilalijpam}. But from the presentation of $M$, it looks some sort of Laplacian matrix.
\end{remark}

Let us look at the weak form of the Problem \ref{problem distinct 2}.
\begin{problem}\label{problem distinct 3}
	Characterize the classes of matrices such that the spectrum of the quotient matrix contains all the distinct eigenvalues of parent matrix with respect to the enlarged equitable partition (possibly smallest)?
\end{problem}
By Theorem \ref{theorem eqitable quotient matrix contains all the eigenvalues with some partition}, such a partition exits. And, we are asked  to search for second, third or so forth smallest equitable partition such that its equitable quotient matrix contains all the distinct eigenvalues of parent matrix. There may be more enlarged partitions but we are interested in the smallest possible enlarged partition which contains all the distinct eigenvalues of a parent matrix.
 Let us consider the quotient matrix given in Proposition \ref{rare graph unicyclic Laplacian}. As one eigenvalue is missing, so if we consider the enlarged partition $P=\{\{u\},\{v\},\{w\}, \{a_{1}\},\{a_{2},\dots,a_{a}\},\{b\}\}$, its portioned Laplacian matrix is 
$$ L(G)=\begin{bmatrix}
	a+2 & -1 & -1 & -1 &-J_{1\times (a-1)} & 0\\
	-1 & 3 & -1 & 0 & 0_{1\times (a-1)} & -1\\
	-1 & -1 &  2 & 0 & 0_{1\times (a-1)} & 0\\
	-1 & 0 & 0 & 1 & 0_{1\times (a-1)} & 0\\
	-J_{(a-1)\times 1} &  0_{(a-1)\times 1}  & 0_{(a-1)\times 1}  & 0_{(a-1)\times 1} &  I_{a-1} & 0_{(a-1) \times 1}\\
	0 &  -1 & 0 & 0 & 0_{1\times (a-1)} & 1
\end{bmatrix}. $$
The equitable quotient matrix of the above matrix $L(G)$ is 
$$ Q(L(G))=\begin{bmatrix}
	a+2 & -1 & -1 & -1 &-(a-1) & 0\\
	-1 & 3 & -1 & 0 & 0 & -1\\
	-1 & -1 &  2 & 0 & 0 & 0\\
	-1 & 0 & 0 & 1 & 0 & 0\\
	-1 &  0  & 0  & 0 &  1 & 0\\
	0 &  -1 & 0 & 0 & 0 & 1
\end{bmatrix}. $$
The characteristic polynomial  of $Q(L(G))$ is
$$ x(x-1)\left(x^{4}-(a+9) x^3+(6 a+27) x^2-(9 a+31) x+3 a+12\right).$$
The above polynomial contains all the distinct eigenvalues of  $L(G)$ with enlarged partition $P=\{\{u\},\{v\},\{w\},\{a_{1}\},\{a_{2},\dots,a_{a}\},\{b\}\}.$ Thus, $Q(L(G))$ contains all the distinct eigenvalues of $L(G)$ with the smallest enlarged partition. We note that such a partition is not unique as the vertex set $\{a_{1},a_{2},\dots,a_{a}\}$ can be enlarged in $\binom{a}{1}=a$ ways. A similar idea of partition works for the distance Laplacian matrix of a graph $G.$

\vskip 2mm
We now turn our attention to Problem \ref{problem distinct 3}, and work for graph matrices with special features.

If $G$ be a graph of order $n$ with independent set $S=\{v_{1},v_{2},\dots,v_{\alpha}\}. $ If $a> \frac{n}{2}$, then there are at least two pendent vertices $\{v_{1},v_{2}\}$ share the same neighbors. If each vertex of $S$ have the same number of neighbors, then $ \beta$ is the eigenvalue of some graph matrix. As in this case the row sum corresponding to vertices of $S$ is constant of  graph matrix $M(G).$ Choosing eigenvectors with zero entries on remaining vertices and $1$ at $v_{1}$-th coordinate, and rotating $-1$ for exactly one $v_{i}$ $(2\leq i\leq \alpha)$, and putting other coordinates of $v_{j}$ as zero. We obtain $\alpha-1$ eigenvectors related to eigenvalue $\beta$ of $M(G).$ The eigenvalue $\beta$ may or may not be the eigenvalue of the quotient matrix with respect to the smallest equitable partition. But, however if we decompose $S$ as $S=\{v_{i}\}\cup \{v_{1},\dots,v_{i-1},v_{i+1},\dots,v_{\alpha}\},$ then with the new enlarged partition $\{C_{1},\dots,C_{\eta}, \{i\},\{1,2,\dots,i-1,i+1,\dots,\alpha\}\}$, the equitable quotient matrix will contain all the missing eigenvalue $\beta$ of $M(G),$
where  $C_{i}$ are other partition cells. If $M(G)$ is a weighted adjacency matrix, and if there are more independent sets like $S_{i}$ with same properties as with $S,$ then it is enough to enlarge just one $S_{i}$, otherwise it will lead to repeated multiplicity of the eigenvalue $\beta.$ For Laplacian (signless) or distance based Laplacian (signless) matrices, the transmission may be different and different independent may lead to different eigenvalues $\beta_{i}$ with multiplicity $|S_{i}|-1$. In this case, we divide each cell corresponding to $S_{i}$ as $\{\{j\},\{1,2,\dots,j-1,j+1,\dots,|S_{i}|\}\}$ and the equitable quotient will contain all the missing eigenvalues $\beta_{i}$ of graph matrix $M(G)$ with this new enlarged partition. If $\alpha\leq \frac{n}{2},$ then there may exist independent sets  $S$ with cardinality $\alpha$, such that no two vertices share the same neighbours. In this cases, $\beta$ may not be an eigenvalue of $M(G)$. Though, if $\alpha>\frac{n}{2},$ then at least two vertices in $G$ share the same neighbors, and $\beta$ is the eigenvalue of $M(G).$ Now, enlarged the partition cell as above, and we can find $\beta$ in the smallest possible enlarged quotient matrix.  If $S$ is a clique and each vertex of $S$ share the same numbers of neighbors outside $S$, then $\gamma$ is the eigenvalue of graph $M(G)$ with multiplicity $|S|-1$. With a similar idea of the partition decomposition as in independent case, we get an enlarged partition with new partition cells $ \{i\},\{1,2,\dots,i-1,i+1,\dots,\alpha\}$, such that the quotient matrix contains the eigenvalue $\gamma.$ We make these observations, precise in the following result.
\begin{theorem}\label{end}
	If $G$ is a graph of order $n,$ and $M(G)$ is its graph matrix. Then the following hold.
	\begin{enumerate}
		\item If $ S=\{v_{1}, v_{2},\dots, v_{\alpha}\} $ be an independent subset of $ G $ such that $ N(v_{i})=N(v_{j})$, for all $ i,j\in \{1,2,\dots,\alpha \} $, and $\beta$ is an eigenvalue of $M(G)$. Then the quotient matrix of enlarged partition with cells $\{\{i\}, \{1,\dots,i-1,i+1, \alpha\}\}$ contains the eigenvalue of $\beta$ of $M(G).$
	\item  If $ S=\{v_{1}, \dots, v_{\alpha}\} $ be a clique of $ G $ such that $ N(v_{i}) \setminus \{v_{j}\}=N(v_{j})\setminus \{v_{i}\}$, for all $ i,j\in \{1,\dots,\alpha \} $, and $\gamma$ is an eigenvalue of $M(G)$. Then the quotient matrix of enlarged partition with cells $\{\{i\}, \{1,\dots,i-1,i+1, \alpha\}\}$ contains the eigenvalue of $\gamma$ of $M(G).$
	\end{enumerate}
\end{theorem}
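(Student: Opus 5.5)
The plan is to reduce the statement to Theorem \ref{thm:equitable-all-eigs-general}(1): an eigenvalue lies in the spectrum of the equitable quotient exactly when it has an eigenvector constant on the cells. So two things are needed. First, that the enlarged partition $\pi'=\{C_1,\dots,C_\eta,\{v_i\},S\setminus\{v_i\}\}$ is still equitable. Second, an explicit eigenvector for $\beta$ (resp.\ $\gamma$) that is constant on its cells.

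\emph{Interpreting $M(G)$ and $\beta$.} As in the discussion preceding the theorem, I read $M(G)$ as any of the graph matrices considered (weighted adjacency, (signless) Laplacian, distance-type matrices). The only property I use is invariance under automorphisms of $G$: $M_{\sigma(x)\sigma(y)}=M_{xy}$ for every automorphism $\sigma$. This holds because the entries depend only on adjacency, degrees, distances and transmissions. Under the hypothesis $N(v_i)=N(v_j)$ (resp.\ $N(v_i)\setminus\{v_j\}=N(v_j)\setminus\{v_i\}$), the transposition $\tau_{ij}$ swapping $v_i,v_j$ is an automorphism. Hence:
\begin{itemize}
\item $M_{xv_i}=M_{xv_j}$ for every $x\notin\{v_i,v_j\}$;
\item all diagonal entries $M_{v_jv_j}$ with $v_j\in S$ equal a common value $d$;
\item all off-diagonal entries $M_{v_jv_k}$ inside $S$ equal a common value $c$ (with $c=0$ in the independent case only for the adjacency-type matrices).
\end{itemize}
A direct computation then gives $M(e_{v_1}-e_{v_j})=(d-c)(e_{v_1}-e_{v_j})$ for $2\le j\le\alpha$, exactly as for the vectors $X_i$ used earlier. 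So $\beta$ (resp.\ $\gamma$) is understood as this twin eigenvalue $d-c$, of multiplicity at least $\alpha-1$. Pinning down this identification is a necessary part of making the statement precise.

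\emph{Equitability of $\pi'$.} I take a starting equitable partition in which $S$ is contained in a single cell; for simplicity assume $S$ itself is a cell, since otherwise one first splits that cell off, using the same twin symmetry. For a row $x$ outside $S$, its entries on the columns of $S$ are all equal, by the first point above. Since its row sum over $S$ was constant on the cell of $x$, each individual entry $M_{xv}$, $v\in S$, is that constant divided by $\alpha$, and hence constant on the cell of $x$. Therefore the sums into $\{v_i\}$ and into $S\setminus\{v_i\}$ are constant on every old cell. For rows in $S\setminus\{v_i\}$:
\begin{itemize}
\item the sum into $\{v_i\}$ is $c$;
\item the sum into $S\setminus\{v_i\}$ is $d+(\alpha-2)c$;
\item the sums into the other cells $C_k$ agree because of the automorphisms $\tau_{jk}$.
\end{itemize}
The singleton cell is trivially fine. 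So $\pi'$ is equitable.

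\emph{The eigenvector.} Put
$$x=(\alpha-1)e_{v_i}-\sum_{j\ne i}e_{v_j}=\sum_{j\ne i}(e_{v_i}-e_{v_j}).$$
It is a sum of $\beta$-eigenvectors, hence lies in $E_\beta$. It is nonzero and constant on the cells of $\pi'$: it takes the value $\alpha-1$ on $\{v_i\}$, the value $-1$ on $S\setminus\{v_i\}$, and $0$ on every $C_k$. Thus $E_\beta\cap W'\neq\{0\}$, and Theorem \ref{thm:equitable-all-eigs-general}(1) gives $\beta\in\sigma(Q')$. The clique case is identical with $\gamma$ in place of $\beta$.

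The main obstacle is the equitability step together with the precise meaning of ``graph matrix''. The argument hinges on the twin transposition being a symmetry of $M(G)$ and on $S$ sitting inside a single cell of the starting partition. I would state these as standing assumptions, verified for each listed matrix family via automorphism invariance, rather than attempt a fully general proof.
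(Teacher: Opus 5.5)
Your argument is correct and uses the same idea as the paper. The paper gives no separate proof of Theorem~\ref{end}, only the informal discussion before it: the twin vectors $e_{v_1}-e_{v_j}$ produce $\beta$, and splitting the cell $S$ into $\{v_i\}$ and $S\setminus\{v_i\}$ is asserted to put $\beta$ into the quotient. You supply the two steps that discussion skips. First, the refined partition is still equitable. Second, $\sum_{j\neq i}(e_{v_i}-e_{v_j})$ is a $\beta$-eigenvector constant on its cells, so Theorem~\ref{thm:equitable-all-eigs-general}(1) applies. Your identification of $\beta$ with the twin eigenvalue $d-c$ is also necessary, since for an arbitrary eigenvalue $\beta$ the statement is false.

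One claim must go: the reduction ``otherwise one first splits that cell off, using the same twin symmetry'' is false. The twin symmetry makes the entries $M_{xv}$, $v\in S$, equal along each row $x$. It says nothing about how these entries vary as $x$ ranges over its cell. For a counterexample, take $K_{3,3}$ and let $S$ be two vertices of one side. The trivial partition is equitable, but $\{S,V\setminus S\}$ is not: the third vertex of that side has no neighbours in $S$, while each vertex of the other side has two. The partition $\{\{v_1\},\{v_2\},V\setminus S\}$ fails to be equitable for the same reason, so there is no equitable quotient to speak of. Your standing hypothesis therefore has to be that $S$ is itself a cell of the starting equitable partition, not merely that $S$ lies inside a cell. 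The paper tacitly assumes exactly this when it writes the new partition as $\{C_1,\dots,C_\eta,\{i\},\dots\}$. Such a partition always exists: $\{S\}\cup\{\{x\}:x\notin S\}$ is the orbit partition of the group generated by the twin transpositions, hence equitable. Under that hypothesis your argument is complete.
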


Consider the complete bipartite graph $K_{a,b}, a\neq b$ and $A(K_{a,b})$ be its adjacency matrix. Then $A(K_{a,b})=\begin{bmatrix}
0_{a} &J_{a\times b}\\
J_{b\times a} & 0_{b}
\end{bmatrix}.$ With partition $p=\{\{1,2,\dots,a\},\{1,2,\dots,b\}\}$, the equitable quotient matrix is $ Q(K_{a,b})=\begin{bmatrix}
0 & b\\ a & 0
\end{bmatrix},$ and its eigenvalues are $\pm \sqrt{ab}$. However, it has two independent sets, and each vertex in one set is sharing the same number of neighbours. Also, the eigenvalue $0$ is missing in the smallest equitable quotient matrix. By above theorem and its observations, we can break any of these two independent sets and enlarge the partition like $P=\{\{1\},\{2,\dots,a\},\{1,2,\dots,b\}\}.$ The equitable quotient matrix with this partition is $Q^{\prime}=\begin{bmatrix}
0 & 0 & b\\
0 & 0 & b\\
1 & a-1 & 0
\end{bmatrix},$ and its spectrum is $\{0,\pm\sqrt{ab}\}.$ Thus, $Q^{\prime}$ contains all the distinct eigenvalues of $ A(K_{a,b}).$ For the Laplacian matrix of $K_{a,b}$, we have $L(K_{a,b})=\begin{bmatrix}
bI_{a} & -J_{a\times b}\\
-J_{b\times a} & aI_{b}
\end{bmatrix},$ and its smallest equitable quotient matrix is $Q(L(K_{a,b}))=\begin{bmatrix}
b & -b\\ -a & a
\end{bmatrix}.$ The spectrum of $ Q(L(K_{a,b}))$ is $\{0,a+b\}$ with partition is $\{\{1,2,\dots,a\},\{1,2,\dots,b\}\}$. Due to different degrees for two independent sets of $K_{a,b},$ we enlarged one by one partition cells $\{1,2,\dots,a\}$ and $\{1,2,\dots,b\}$, or both of them together. So, quotient matrix with equitable partition $P=\{\{1\},\{2,\dots,a\},\{1\},\{2,\dots,b\}\}$ is 
$$ Q(L(K_{a,b}))=\left[
\begin{array}{cccc}
	b & 0 & -1 & -(b-1) \\
	0 & b & -1 & -(b-1) \\
	-1 & -(a-1) & a & 0 \\
	-1 & -(a-1) & 0 & a \\
\end{array}
\right], $$
and its eigenvalues are $\{0,a,b,a+b\}.$ Thus, the quotient matrix with enlarged partition $P $ contains all the distinct eigenvalues of $ L(K_{a,b}).$ Let $G\cong CS(\omega,\alpha)$ be a complete split graph with clique number $\omega$ and independence number $\alpha$. The distance signless Laplacian matrix of $ G$ is $D^{Q}(G)=\begin{bmatrix} (\omega+\alpha-2)I_{\omega}+J_{\omega}& J_{\omega\times \alpha}\\ J_{\alpha\times \omega} & (\omega+2\alpha-4)I_{\alpha}+2J_{\alpha}
\end{bmatrix}.$ The equitable quotient matrix with partition is $\{\{1,2,\dots,\omega\},\{1,2,\dots,\alpha\}\}$ is $Q(D^{Q}(G))=\left[ \begin{array}{cc}
a+2 w-2 & a \\
w & 4 a+w-4 
\end{array}
\right],$ and its eigenvalues are $\frac{1}{2} \left(5 \alpha+3 \omega-6\pm \sqrt{9 \alpha^2-2 \alpha \omega-12 \alpha+\omega^2+4 \omega+4}\right)$. Now, enlarging the clique cell $\{1,2,\dots,\omega\}$ and the independent cell $\{1,2,\dots,\alpha\}$, and consider the quotient matrix of new partition $P^{\prime}=\{\{1\},\{2,\dots,\omega\},\{1\},\{2,\dots,\alpha\}\}$, we have
$$ Q^{\prime}(D^{Q}(G))=\left[
\begin{array}{cccc}
	\alpha+\omega-1 & \omega-1 & 1 & \alpha-1 \\
	1 & \alpha+2 \omega-3 & 1 & \alpha-1 \\
	1 & \omega-1 & 2 (\alpha-1)+\omega & 2 (\alpha-1) \\
	1 & \omega-1 & 2 & 4 \alpha+\omega-6 \\
\end{array}
\right].$$
The eigenvalues of the above matrix are 
$$ \Big\{\alpha + \omega-2, 2 \alpha + \omega-4,  \frac{1}{2} \left(5 \alpha+3 \omega-6\pm \sqrt{9 \alpha^2-2 \alpha \omega-12 \alpha+\omega^2+4 \omega+4}\right)\Big\}. $$
That illustrates the application of Theorem \ref{end} for both cases of clique and an independent sets for the distance Laplacian matrix. Thus, with the enlarged partition $P^{\prime}$, the equitable quotient matrix $Q^{\prime}$ contains all the distinct eigenvalues of the complete split graph.

The Problem \ref{problem distinct 3} remains open in general for large classes of graph matrices. When spectrum of a graph matrix $M(G)$ is symmetric towards its origin, then we need to divide two partition cells simultaneously. 

\section{Conclusion}

In this paper, we have investigated when the spectral information of a square matrix $M$ can be faithfully recovered from an equitable quotient matrix $Q$ arising from a partition of its index set. Starting from the notion of an equitable partition, we identified broad classes of matrices for which the equitable quotient matrix contains all distinct eigenvalues of $M$. In these cases, the (possibly much smaller) matrix $Q$ completely encodes the distinct eigenvalues of the parent matrix $M$, and hence provides a substantial reduction in dimension without any loss of spectral information at the level of distinct eigenvalues (Theorem \ref{thm:n-by-n-two-eigs}).

A central contribution of our work is the derivation of necessary and sufficient conditions for a distinct eigenvalue of $M$ to appear in the spectrum of $Q$ in terms of the eigenspaces of $M$ and the structure of the partition (Theorem \ref{thm:equitable-all-eigs-general}). These conditions clarify how the choice of partition interacts with the geometry of eigenspaces, and they characterize precisely when an equitable quotient captures a given eigenvalue. 

We have also explored applications of these ideas in the context of graph matrices, where $M$ arises from a natural matrix associated with a graph, and the partition is induced by a structural decomposition such as vertex classes or symmetry orbits. In this setting, we showed that a suitable (and sometimes slightly enlarged) equitable partition allows one to read off the distinct eigenvalues of the parent matrix from the corresponding quotient matrix (Proposition \ref{rare graph unicyclic} and Theorem \ref{end}). This leads to efficient ways of obtaining spectral information for large graphs by working with comparatively small quotient matrices.   Possible directions for future work include extending these techniques to broader classes of matrices and partitions (for example, almost equitable partitions), studying stability under perturbations of $M$ and of the partition, and further exploiting these ideas in algorithmic applications where one needs to approximate or track eigenvalues of large structured matrices.

\section*{Declarations}
\noindent \textbf{Data Availability:}	There is no data associated with this article as data sharing is not applicable since no data sets were generated or analyzed during the current study.

\noindent \textbf{Funding:} The authors declare that no funds, grants, or other support were received during the preparation of this manuscript.

\noindent \textbf{Conflict of interest:} The authors have no competing interests to declare that are relevant to the content of this article.

\noindent\textbf{Note:} I welcome any comments and suggestions regarding this article; please feel free to contact me at \href{bilalahmadrr@gmail.com}{bilalahmadrr@gmail.com}


\begin{thebibliography}{0}

\bibitem{atik}	F. Atik, On equitable partition of matrices and its applications, \emph{Linear Multilinear Algebra} \textbf{68}(11) (2020) 2143--2156.
\bibitem{atikELA2018}  {F. Atik and P. Panighari, On the distance and distance signless Laplacian eigenvalues of graphs and the smallest Gre\v{s}gorin disc, \em Electronic J. Linear Algebra} {\bf 34} (2018) 191--204.

\bibitem{BH} A. E. Brouwer and W. H. Haemers, \textit{Spectra of Graphs}, Springer, New York, 2010.
\bibitem{cds} D. M. Cvetkovi\'{c}, P. Rowlison and S. Simi\'c, \textit{An Introduction to Theory of Graph Spectra}, London Math. Society Student Text, 75, Cambridge University Press, UK, 2010.
\bibitem{vilmar} E. Fritscher, and V. Trevisan, Exploring symmetries to decompose matrices and graphs preserving the spectrum, \textit{SIAM J. Matrix Analysis  Appl.} \textbf{37}(1) (2016) 260--289.
\bibitem{hjl} S. R. Garcia and R. A. Horn, \emph{A Second Course in Linear Algebra}, Cambridge University Press, UK, 2017.
\bibitem{heamers} W. H. Haemers, Interlacing eigenvalues and graphs, \textit{Linear Algebra Appl.} {\bf 226-288} (1995) 593--616.
\bibitem{hj} R. Horn and C. Johnson,  \textit{Matrix Analysis}, Second Edition, Cambridge University Press, New York, (2013).
\bibitem{mehreen1} Z. Mehranian, A. Gholami and A. R. Ashrafi, The spectra of power graphs of certain finite groups,  Linear Multilinear Algebra  {\bf 65}(5) (2016) 1003--1010.

\bibitem{bilal2020}	S. Pirzada, B. A. Rather, M. Aijaz and T. A. Chishti, Distance signless Laplacian spectrum of graphs and spectrum of zero divisor graphs of  $ \mathbb{Z}_{n}, $  \emph{Linear MultiLinear Algebra} \textbf{70}(17) (2022) 3354--3369.
\bibitem{bilalmath} B. A. Rather, H. A. Ganie, K. C. Das and Y. Shang, General extended adjacency eigenvalues of chain graphs,  \emph{Math.} \textbf{12}(2) (2024) 192.
\bibitem{bilalijpam} B. A. Rather, A note on the distinct eigenvalues of quotient matrices, \textit{Indian J. Pure Appl. Math.} \textbf{55} (2024) 1429--1439.
\bibitem{bilallapsurvey} B. A. Rather and M. Aouchiche, Distance Laplacian spectra of graphs: a survey, \emph{Discrete Appl. Math.} \textbf{361} (2025) 136--195.
\bibitem{bilalslapsurvey} B. A. Rather, H. A. Ganie, and J. Wang, Distance signless Laplacian spectra of graphs: a survey, \emph{Discrete Appl. Math.} (2025), in press.
\bibitem{bilal}  B. A. Rather, S. Pirzada and Z. Goufei, On distance Laplacian spectra of power graphs of certain finite groups, \emph{Acta Math. Sinica, English Series} \textbf{39} (2023) 603--617.
\bibitem{DS} D. Stevanovi\'{c},  Large sets of long distance equienergetic graphs,  \textit{Ars Math. Contemp.} {\bf 2}(1) (2009) 35--40.
\bibitem{you} {L. You, M. Yang, W. So and W. Xi, On the spectrum of an equitable quotient matrix and its applications, \textit{Linear Algebra Appl.}} {\bf 577} (2019) 21--40.

\end{thebibliography}
\end{document}